\numberwithin{equation}{section}
\newcommand{\al}{\alpha}
\newcommand{\be}{\beta}
\newcommand{\de}{\delta}
\newcommand{\De}{\Delta}
\newcommand{\ep}{\epsilon}
\newcommand{\ve}{\varepsilon}
\newcommand{\emp}{\emptyset}
\newcommand{\ga}{\gamma}
\newcommand{\la}{\lambda}
\newcommand{\La}{\Lambda}
\newcommand{\ot}{\otimes}
\newcommand{\Om}{\Omega}
\newcommand{\si}{\sigmaup}
\newcommand{\te}{\theta}
\newcommand{\Te}{\Theta}
\newcommand{\vt}{\vartheta}
\newcommand{\mB}{\mathcal{B}}
\newcommand{\mG}{\mathcal{G}}
\newcommand{\mH}{\mathcal{H}}
\newcommand{\mK}{\mathcal{K}}
\newcommand{\mC}{\mathscr{C}}
\newcommand{\pp}{\mathscr{P}}
\newcommand{\fh}{\mathfrak{h}}
\newcommand{\fg}{\mathfrak{g}}
\newcommand{\fq}{\mathfrak{q}}
\newcommand{\fd}{\mathfrak{D}}
\newcommand{\fs}{\mathfrak{S}}
\newcommand{\fr}{\mathfrak{R}}
\newcommand{\bN}{\mathbb{N}}
\newcommand{\bQ}{\mathbb{Q}}
\newcommand{\bZ}{\mathbb{Z}}
\newcommand{\rc}{\rotatebox{90}{$\circlearrowright$}}
\newcommand{\bk}{\mathbb{K}}
\newcommand{\mb}[1]{\mbox{#1}}
\newcommand{\bs}[1]{{\scriptsize\mbox{#1}}}
\newcommand{\lan}{\langle}
\newcommand{\ran}{\rangle}
\newcommand{\lb}{\left(}
\newcommand{\rb}{\right)}
\newcommand{\rw}{\rightarrow}
\newcommand{\beq}{\begin{equation}}
\newcommand{\eeq}{\end{equation}}
\begin{document}

\newtheorem{theorem}{Theorem}[subsection]

\newtheorem{lem}[theorem]{Lemma}

\newtheorem{cor}[theorem]{Corollary}
\newtheorem{prop}[theorem]{Proposition}

\theoremstyle{remark}
\newtheorem{rem}[theorem]{Remark}

\newtheorem{defn}[theorem]{Definition}

\newtheorem{exam}[theorem]{Example}

\theoremstyle{conjecture}
\newtheorem{con}[theorem]{Conjecture}

\renewcommand\arraystretch{1.2}

\title[Representation theory of 0-Hecke-Clifford algebras]{Representation theory of 0-Hecke-Clifford algebras}

\author[Li]{Yunnan Li}
\address{School of Mathematics, South China University of Technology, Guangzhou 510640, China}
\email{scynli@scut.edu.cn}

\subjclass[2010]{Primary 05E05, 05E10; Secondary 05E99, 16T99, 20C08}

\begin{abstract}
The representation theory of 0-Hecke-Clifford algebras as a degenerate case is not semisimple and also with rich combinatorial meaning. Bergeron et al. have proved that the Grothendieck ring of the category of finitely generated supermodules of 0-Hecke-Clifford algebras is isomorphic to the algebra of peak quasisymmetric functions defined by Stembridge. In this paper we further study the category of finitely generated projective supermodules and clarify the correspondence between it and the peak algebra of symmetric groups. In particular, two kinds of restriction rules for induced projective supermodules are obtained. After that, we consider the corresponding Heisenberg double and its Fock representation to prove that the ring of peak quasisymmetric functions is free over the subring of symmetric functions spanned by Schur's Q-functions.

\medskip
\noindent\textit{Keywords:} 0-Hecke-Clifford algebra, peak algebra, Heisenberg double
\end{abstract}

\maketitle

\section{Introduction}

As a $q$-deformation of the Sergeev algebra, the Hecke-Clifford algebra $HCl_n(q)$ is defined by G. Olshanski in \cite{Ols} mixing the Hecke algebra $H_n(q)$ and the Clifford algebra $Cl_n$. When $q$ is generic, it satisfies the Schur-Sergeev-Olshanski super-duality with the quantum enveloping algebra of queer Lie superalgebras $\fq_n$. Moreover, the Grothendieck group of the tower of Hecke-Clifford algebras is isomorphic to the subalgebra of symmetric functions spanned by Schur's Q-functions, parallel to the classical case of Sergeev algebras ($q=1$) \cite[\S 3.3]{CW}. For the root of unity case, Brundan and Kleshchev in \cite{BK1} consider the (affine) Hecke-Clifford algebra and relate its representation category with the positive part of the universal enveloping algebra for the affine Kac-Moody algebra $\fg=A_{2l}^{(2)}$. Recently, Mori in \cite{Mor} extends the method of cellular algebras to the superalgebra setting to study the cellular representation theory of Hecke-Clifford algebras uniformly only requiring that $q$ is invertible. In a word, the representation theory of (affine) Hecke-Clifford algebras has been widely studied; see also \cite{HKS},\cite{Wan},\cite{WW}, etc.

There still leaves a degenerate case when $q=0$ and also with nice combinatorial aspect. The 0-Hecke-Clifford algebra $HCl_n(0)$ is first considered by Bergeron et al. in \cite{BHT}, where they mainly construct the simple supermodules of $HCl_n(0)$, and prove the Frobenius isomorphism between the Grothendieck group of the category of finitely generated supermodules of 0-Hecke-Clifford algebras and the Stembridge algebra of peak quasisymmetric functions. Since the cellular approach fails in this degenerate case, one needs different techniques to handle its representation theory. In this paper, we dually discuss the category of finitely generated projective supermodules.
Note that the graded Hopf dual of the algebra of peak quasisymmetric functions is the peak algebra of symmetric groups \cite{Sch}. Here we confirm the Hopf dual pair between the Grothendieck groups of the above two supermodule categories and explicitly relate the projective one to the peak algebra. In fact, there already have many nice papers considering towers of algebras together with their corresponding Grothendieck groups, which provide a bunch of (combinatorial) Hopf algebras; see \cite{BK1},\cite{CL},\cite{FJW},\cite{KT},\cite{SY}, etc.

Recently, Berg et al. in \cite{BBSSZ} construct a noncommutative lift of Schur functions, called the immaculate basis, and then in \cite{BBSSZ1} find the correspondence of its dual basis to the category of finitely generated modules of 0-Hecke algebras under the Frobenius isomorphism defined in \cite{KT}. Inspired by their work, we construct a lift of Schur's Q-functions to the peak algebra and thus extract a new basis from it in \cite{JL}. Dually we define a new basis, called the quasisymmetric Schur's Q-functions, in the Stembridge algebra, whose expansion in the peak functions is expected to be positive based on concrete examples \cite[Conjecture 4.15]{JL}. It implies the chance for a representation theoretical meaning of such basis on the supermodule category of 0-Hecke-Clifford algebras. Burying such target in mind, we want to further study the representation theory of 0-Hecke-Clifford algebras.

As an application, we also use the corresponding Heisenberg double and its Fock representation from the tower of 0-Hecke-Clifford algebras to prove that the ring of peak quasisymmetric functions is free over the subring of symmetric functions spanned by Schur's Q-functions. Such method is purposed by Savage et al. in \cite{SY} to give a new proof of the freeness of the ring of quasisymmetric functions over the ring of symmetric functions. For further discussion of twisted version of Heisenberg doubles, one can refer to \cite{RS1}, \cite{RS}.

The organization of the paper is as follows. In $\S 2$ we provide some notation, definitions for all combinatorial Hopf algebras that we involve. Some preliminaries on superalgebras and the terminology of towers of superalgebras are recalled. In $\S 3$ we focus on the representation theory of 0-Hecke-Clifford algebras. Their projective supermodules induced from those of 0-Hecke algebras are mainly considered. Finally we clarify a dual pair of graded Hopf algebra structures on two Grothendieck groups of finitely generated supermodules and projective supermodules of 0-Hecke-Clifford algebras, whose Frobenius superalgebra structures are also figured out. In $\S 4$ we further give the concrete relation between the Grothendieck groups defined in the previous section with the peak algebra of symmetric groups and its dual, extending the results in \cite{BHT}. In particular for those induced projective supermodules, two restriction rules and the decomposition formula to indecomposable ones are given. As a final application, the freeness of the ring of peak quasisymmetric functions over the subring of symmetric functions spanned by Schur's Q-functions is proved.

\section{Preliminaries}
\subsection{Notation and definitions}
Throughout this paper, we work over an algebraically closed field $\bk$ of characteristic 0 for simplicity. Denote by $\bN$ (resp. $\bN_0$) the set of positive (resp. nonnegative) integers. Given any $m,n\in\bN$, let $[m,n]:=\{m,m+1,\dots,n\}$ if $m\leq n$ and $\emp$ otherwise. Also $[n]:=[1,n]$ for short. Let $2^{[n]}$ be the set of subsets of $[n]$ and $\mC(n)$ be the set of compositions of $n$, consisting of ordered tuples of positive integers summed up to $n$.  We denote $\al\vDash n$ when $\al\in \mC(n)$. Let $\mC:=\bigcup\limits_{n\geq1}^.\mC(n)$. Given $\al=(\al_1,\dots,\al_r)\vDash n$, let $\ell(\al)=r$ be its length and define its associated \textit{descent set} as
\[D(\al):=\{\al_1,\al_1+\al_2,\dots,\al_1+\cdots+\al_{r-1}\}
\subseteq[n-1].\]
Given a permutation $w=w_1\cdots w_n\in\fs_n$, we also define its associated \textit{descent set} as
\[D(w):=\{i\in[n-1]:w_i>w_{i+1}\}\]
and \textit{descent composition} $c(w)\vDash n$ such that $D(c(w))=D(w)$. Note that $\mC(n)\rw 2^{[n-1]},\,\al\mapsto D(\al)$ is a bijection. The refining order $\leq$ on $\mC(n)$ is defined by
\[\al\leq\be\mb{ if and only if }D(\be)\subseteq D(\al),\,\forall\al,\be\vDash n.\]
In general, for $\al\in\bN^r_0$, let $\ell(\al)=|\{i\,:\,\al_i>0\}|$.
Given $\al\vDash n$, the corresponding \textit{descent class} of the symmetric group $\fs_n$ is defined by
\[\fd_\alpha:=\{w\in\fs_n:D(w)=D(\alpha)\}.\]
For $\al=(\al_1,\dots,\al_r)\vDash n$, define its three counterparts:

(1) the \textit{reverse} of $\al$, $\bar{\al}:=(\al_r,\dots,\al_1)$,
such that $D(\bar{\al})=\{i\in[n-1]:n-i\in D(\al)$.

(2) the \textit{complement} of $\al$, $\al^c\vDash n$
such that $D(\al^c)=[n-1]\backslash D(\al)\}$.

(3) the \textit{conjugation} of $\al$, $\al^*:=\bar{\al}^c$.

We also recall the concept of peaks. A subset $P\subseteq[n]$ is called a \textit{peak set} in $[n]$ if $P\subseteq[2,n-1]$ and $i\in P\Rightarrow i-1\notin P$. Denote by $\pp_n$ the collection of peak sets in $[n]$, $\pp:=\bigcup\limits^._{n\geq1}\pp_n$, and $\emp_n$ the empty set $\emp$ in $\pp_n$. Given $\al=(\al_1,\dots,\al_r)\vDash n$, let
\[P(\al):=\{x\in[2,n-1]:x-1\notin D(\al),\,x\in D(\al)\}\]
be its associated peak set in $[n]$, while its associated \textit{valley set} $V(\al)\subseteq[n]$ is defined by
\[V(\al)\cap[2,n]=\{x\in[2,n]:x-1\in D(\al),\,x\notin D(\al)\}\]
and $1\in V(\al)\Leftrightarrow 1\notin D(\al)$. Note that $|V(\al)|=|P(\al)|+1$. Given a permutation $w=w_1\cdots w_n\in\fs_n$, its \textit{peak set} \[P(w):=\{i\in[2,n-1]\,:\,w_{i-1}<w_i>w_{i+1}\}=P(c(w)).\].


Define the algebra of \textit{noncommutative symmetric functions} as the free associative $\bk$-algebra generated by the symbols $H_n\,(n\in\mathbb{N})$  and denote it by NSym \cite{GKL}. Define another set of generators $E_n\,(n\in\mathbb{N})$ in NSym by
\[\sum_{i=0}^n(-1)^iE_iH_{n-i}=\de_{n,0}.\]
The algebra
$\mb{NSym}=\bigoplus_{n=0}^{\infty} \mb{NSym}_n$ is a $\mathbb Z$-graded algebra
under the gradation given by $\mb{deg}(H_n)=n$, where $\mb{NSym}_n$ is
the subspace of homogeneous elements of degree $n$.
Let \[H_\al:=H_{\al_1}\cdots H_{\al_r},E_\al:=E_{\al_1}\cdots E_{\al_r},~\al=(\al_1,\dots,\al_r)\vDash n.\]
If we change base from $\bk$ to $\bZ$, then both $\{H_\al\}_{\al\vDash n}$ and $\{E_\al\}_{\al\vDash n}$ are $\bZ$-bases of $\mb{NSym}_n$, called the {\it noncommutative complete} and {\it elementary symmetric functions} respectively. There exists another important $\bZ$-basis $\{R_\al\}_{\al\vDash n}$ of $\mb{NSym}_n$, called the \textit{noncommutative ribbon Schur functions} and are defined by
\[R_\al:=\sum_{\be\geq\al}(-1)^{l(\be)-l(\al)}H_\be.\]
With the coproduct defined by
\beq\label{coq}
\De(H_n)=\sum_{k=0}^n H_k\ot H_{n-k},\eeq
NSym becomes a graded and connected Hopf algebra.
Moreover, the graded Hopf dual of NSym is the algebra of \textit{quasisymmetric functions}, denoted by QSym \cite{MR}.
It is a subring of the power series ring $\bk[[x_1,x_2,\dots]]$ in the commuting variables $x_1,x_2,\dots$ and has a linear basis, the \textit{monomial quasisymmetric functions}, defined by
\[M_\al:=M_\al(x)=\sum\limits_{i_1<\cdots< i_r}x_{i_1}^{\al_1}\cdots x_{i_r}^{\al_r},\]
where $\al=(\al_1,\dots,\al_r)$ varies over the set $\mC$ of compositions. There is another important basis, the \textit{fundamental quasisymmetric functions}, defined by
\[F_\al:=F_\al(x)=\sum\limits_{i_1\leq\cdots\leq i_n\atop i_k<i_{k+1}\mb{ \tiny if }k\in D(\al)}x_{i_1}\cdots x_{i_n},\,\al\vDash n.\]
In other words, $F_\al=\sum_{\be\leq\al}M_\be$. Meanwhile, the canonical pairing $\lan\cdot,\cdot\ran$ between NSym and QSym is defined by
\[\lan H_\al,M_\be\ran=\lan R_\al,F_\be\ran=\de_{\al,\,\be}\]
for any $\al,\be\in\mC$.

Let $\La$ be the graded ring of symmetric functions in the commuting variables $x_1,x_2,\dots$, with integer coefficients, and $\Om$ be the subring of $\La$ generated by the symmetric functions $q_n\,(n\geq1)$, which are defined by
\[\sum_{n\geq0}q_nz^n=\prod_{i\geq1}\dfrac{1+x_iz}{1-x_iz}.\]
Note that $\Om_\bQ:=\Om\ot_\bZ\bQ$ is isomorphic to $\bQ[p_{2k-1}:k\in\bN]$, where $p_n$'s are the power-sum symmetric functions.
It has the following canonical inner product $[\cdot,\cdot]$ defined by
\[[p_\la,p_\mu]=z_\la2^{-\ell(\la)}\de_{\la,\,\mu}\]
for any partitions $\la,\mu$ only with odd parts, where $z_\la:=\prod_{i\geq1}m_i!i^{m_i}$ for $\la=(1^{m_1},2^{m_2},\dots)$ (see \cite[Ch. III, \S 8]{Mac}). Now introduce two Hopf algebra epimorphisms. One is
\[\te:\La\rw\Om,\quad h_n\mapsto q_n,\,n\geq1,\]
such that $\te(p_n)=(1-(-1)^n)p_n,\,n\geq1$, and the other is
\[\pi:\mb{NSym}\rw\La,\,H_n\mapsto h_n,\]
called the \textit{forgetful map} and satisfying
\[\lan F,f\ran=\lan\pi(F),f\ran,\, F\in\mb{NSym},f\in\La.\]

\subsection{The peak subalgebra and its Hopf dual}
Introduce
\[Q_0:=1,\,Q_n:=\sum_{k=0}^nE_kH_{n-k},\,n\geq1\]
and also $Q_n:=0,\,n<0$ for convenience.
Let $Q_\al:=Q_{\al_1}\cdots Q_{\al_r},~\al=(\al_1,\dots,\al_r)\vDash n$. Then $Q_n\,(n\geq1)$ satisfy the following \textit{Euler relations} (also called the \textit{generalized Dehn-Sommerville relation})
\beq\label{np}\sum_{r+s=n}(-1)^rQ_rQ_s=0,\,\forall n\geq1,\eeq
or equivalently,
\beq\label{eo}\sum_{i=1}^{n-1}(-1)^{i-1}Q_iQ_{n-i}=\begin{cases}
2Q_n, & $if $ n $ is even$,\\
0, & $if $ n $ is odd$.
\end{cases}\eeq
Let Peak be the Hopf subalgebra of NSym generated by $Q_n\,(n\geq1)$. Then $\mb{Peak}_n:=\mb{Peak}\cap\mb{NSym}_n$ is isomorphic to the \textit{peak algebra} of the symmetric group $\mathfrak{S}_n$ when endowed with the internal product \cite{BHT,Sch}. For any $P\in\pp_n$, define
\beq\label{pi}\Xi_P:=\sum_{P(\al)=P}R_\al\in\mb{NSym}.\eeq
Then $\{\Xi_P\}_{P\in\pp_n}$ forms a linear basis of $\mb{Peak}_n$ \cite[\S 2]{BHT}. Note that by \cite[Eq.(6)]{BHT},
\beq\label{gen}Q_n=2\Xi_{\emp_n}=2\sum_{k=0}^{n-1}R_{1^k,n-k},\,n\geq1.\eeq
There exists a surjective Hopf algebra homomorphism
\[\Te:\mb{NSym}\rw\mb{Peak},\quad H_n\mapsto Q_n,\,n\geq1,\]
called the \textit{descent-to-peak transform} \cite[\S 5]{KLT}, where $\mb{Ker }\Te$ is the Hopf ideal of NSym generated by
$\sum_{r+s=n}(-1)^rH_rH_s,\,n\geq1$ \cite[Theorem 5.4]{BMSW}.

Next we introduce the graded Hopf dual of Peak, the \textit{Stembridge algebra} $\mb{Peak}^*$ of peak quasisymmetric functions, defined in \cite{Ste}. This is a Hopf subalgebra of QSym, with a natural basis called the Stembridge's \textit{peak functions}. They can be defined by \cite[Prop. 3.5]{Ste}
\beq\label{ka}K_P:=2^{|P|+1}\sum_{\al\vDash n\atop P\subseteq D(\al)\triangle(D(\al)+1)}F_\al,\,P\in\pp_n,\eeq
where $D\triangle(D+1)=D\backslash(D+1)\cup (D+1)\backslash D$ for any $D=\{D_1<\cdots<D_r\}\subseteq[n-1]$ and $D+1:=\{x+1:x\in D\}$.
By \cite[Prop. 2.2]{Ste}, we also have
\[K_P=\sum_{\al\vDash n\atop P\subseteq D(\al)\cup(D(\al)+1)}2^{\ell(\al)}M_\al,\,P\in\pp_n\]
and in particular, by \cite[(2.5)]{Ste},
\beq\label{ke}
K_{\emp_n}=q_n=2\sum_{\al\vDash n}F_\al=\sum_{\al\vDash n}2^{\ell(\al)}M_\al.\eeq
There also exists a surjective Hopf algebra homomorphism
\[\vt:\mb{QSym}\rw\mb{Peak}^*,\quad F_\al\mapsto K_{P(\al)},\]
called the \textit{descent-to-peak map}.

Note that Peak can be regarded as a noncommutative lift of $\Om$. The following commutative diagrams illustrate the situation.
\beq\label{co}\xymatrix@=2em{\mb{NSym}\ar@{->}[r]^-{\Te}\ar@{->}[d]_-{\pi}
&\mb{Peak}\ar@{->}[d]^-{\pi}\\
\La\ar@{->}[r]^-{\te}&\Om},\quad
\xymatrix@=2em{\mb{QSym}\ar@{->}[r]^-{\vt}
&\mb{Peak}^*\\
\La\ar@{->}[r]^-{\te}\ar@{->}[u]&\Om\ar@{->}[u]},\eeq
where the vertical maps in the second diagram are inclusions. Also, the graded Hopf dual pairing between Peak and $\mb{Peak}^*$ is defined by
\beq\label{pp}[\cdot,\cdot]:\mb{Peak}\times\mb{Peak}^*\rw\bk,\quad [\Xi_P,K_Q]=\de_{P,Q},\,P,Q\in\pp,\eeq
which satisfies the following property \cite[Cor. 5.6.]{Sch},
\beq\label{pro}\lan\Te(F),f\ran=\lan F,\vt(f)\ran=[\Te(F),\vt(f)],\, F\in\mb{NSym},f\in\mb{QSym}.\eeq

\subsection{Preliminaries on superalgebras}

We first recall some standard results about the
representation theory of finite dimensional (associative) superalgebras, referring to \cite{BK1,BK}. A \textit{superalgebra} $A$ over $\bk$ is a $\bZ_2$-graded $\bk$-vector space $A=A_{\bar{0}}\oplus A_{\bar{1}}$ which is also an algebra such that $A_iA_j\subseteq A_{i+j},\,i,j\in\bZ_2$. Any superalgebra $A$ considered here has the unit $1=1_A\in A_{\bar{0}}$. Given $a\in A_i\,(i\in\bZ_2)$, let the \textit{degree} of $a$ be $|a|:=i$.  Homomorphisms between two superalgebras are usual algebra homomorphisms.
For two $\bZ_2$-graded $\bk$-vector spaces $V,W$, $\mb{Hom}_{\bk}(V,W)$ is $\bZ_2$-graded such that $f\in\mb{Hom}_\bk(V,W)_i$ if $f(V_j)\subseteq W_{i+j},\,i,j\in\bZ_2$. The base field $\bk$ serves as a one dimensional even space.

A left $A$-\textit{supermodule} $M$ is a $\bZ_2$-graded $\bk$-vector space $M=M_{\bar{0}}\oplus M_{\bar{1}}$ which is also an $A$-module such that $A_iM_j\subseteq M_{i+j},\,i,j\in\bZ_2$. Given $m\in M_i\,(i\in\bZ_2)$, also let the \textit{degree} of $m$ be $|m|:=i$.
A \textit{morphism} $f$ between two left $A$-supermodules $M,N$ is a linear map such that $f(am)=(-1)^{|f||a|}af(m),\,a\in A,m\in M$. We denote the $\bZ_2$-graded $\bk$-vector space of all such morphisms by $\mb{Hom}_A(M,N)$. On the other hand, $M^*:=\mb{Hom}_\bk(M,\bk)$ has the following natural left $A$-supermodule structure:
\[(af)(m)=(-1)^{|a|(|f|+|m|)}f(ma),\,a\in A,f\in M^*,m\in M,\]
if $M$ is a right $A$-supermodule, and the natural right $A$-supermodule structure:
$(fa)(m)=f(am)$, if $M$ is a left $A$-supermodule.

An $A$-supermodule is \textit{irreducible} (or simple) if it is non-zero and has no non-zero proper super submodules. Then it is either irreducible as an ordinary $A$-module (called \textit{of type M}) or else reducible as an $A$-module (called \textit{of type Q}).

All the finitely generated $A$-supermodules together with their morphisms constitute a \textit{superadditive} category, denoted by $A$-mod. Besides, all the finitely generated projective $A$-supermodules form a full subcategory of $A$-mod, denoted by $A$-pmod. There exists the \textit{parity change functor} $\Pi:A\mb{-mod}\rw A\mb{-mod}$ such that $\Pi M$ is the same underlying vector space but with $(\Pi M)_i=M_{i+1}\,(i\in\bZ_2)$ and the following new action:
\[a.m:=(-1)^{|a|}am,\,a\in A,m\in M.\]
Note that one can identify $\mb{Hom}_A(M,N)_i$ with $\mb{Hom}_A(M,\Pi N)_{i+1}$ for any $i\in\bZ_2$, and obviously the map $M\rw\Pi M,\,m\mapsto(-1)^{|m|}m$ is a homomorphism of usual $A$-modules.

Suppose that $\nu$ is an automorphism of a superalgebra $A$.
For any $M\in A$-mod, we can twist the left action on $M$ with $\nu$ to define the following twisted left $A$-module, denoted by ${^\nu M}$:
\[a._\nu m:=\nu(a)m,\,m\in M,a\in A.\]
If $\nu$ is an \textit{unsigned} anti-automorphism of $A$, i.e. $\nu(ab)=\nu(b)\nu(a),\,a,b\in A$, then we can twist the right action of $A$ on $M^*$ with $\nu$ to define the following twisted left $A$-module, also denoted by $^\nu(M^*)$:
\[(a._\nu f)(m):=f(\nu(a)m),\,f\in M^*,m\in M,a\in A.\]
For any right $A$-supermodule $M$, we use the notation $M^\nu$ instead for the right twisted module structure without confusion.

The category $A$-mod has the \textit{underlying even subcategory} with the same objects but only \textit{even} morphisms, which is an abelian category. Hence, we can
define the corresponding \textit{Grothendieck group} $K_0(A\mb{-mod})$ to be the quotient of the free abelian group with all objects
in $A\mb{-mod}$ as a basis by the subgroup generated by

(1) $M_1-M_2+M_3$ for every short exact sequence $0\rw M_1\rw M_2\rw M_3\rw0$ in the underlying even subcategory.

(2) $M-\Pi M$ for every $M\in A\mb{-mod}$.

Similarly we define the Grothendieck group $K_0(A\mb{-pmod})$. For any $M\in A$-mod or $A$-pmod, its class in such Grothendieck group is denoted by $[M]$. Note that the natural embedding from $A\mb{-pmod}$ to $A\mb{-mod}$ induces the \textit{Cartan map}
\beq\label{car}\chi:K_0(A\mb{-pmod})\rw K_0(A\mb{-mod}),\eeq
which describes the multiplicity of composition factors in projective modules. Also there is a canonical embedding
\[K_0(A\mb{-mod})\ot_\bZ K_0(B\mb{-mod})\rw K_0((A\ot B)\mb{-mod}),\,[M]\ot[N]\rw[M\ot N],\]
which is similarly defined on pmod's and an isomorphism when changing the base ring to $\bQ$.

There is a natural bilinear form
\beq\label{pair}\lan\cdot,\cdot\ran:K_0(A\mb{-pmod})\times
K_0(A\mb{-mod})\rw\bZ,\,\lan[P],[M]\ran
=\mb{dim}_\bk\mb{Hom}_A(P,M).\eeq
For the pair on $K_0((A\ot B)\mb{-pmod})\times
K_0((A\ot B)\mb{-mod})\rw\bZ$, we have
\beq\label{tensor}\lan[P\ot Q],[M\ot N]\ran=\lan[P],[M]\ran
\lan[Q],[N]\ran.\eeq
\begin{rem}
The pair defined in (\ref{pair}) only involves dimensions of homomorphism spaces but not their graded dimensions as in \cite{RS}.
It makes the critical difference since the authors in \cite{RS} consider the structure of twisted dual Hopf algebras indeed.
\end{rem}

If $A$ is a finite dimensional superalgebra, let $V_1,\dots,V_r$ be a complete list of non-isomorphic simple $A$-supermodules. If $P_i$ is the projective cover of $V_i$ in $A$-mod for $i=1,\dots,r$, then $P_1,\dots,P_r$ is a complete list of non-isomorphic indecomposable projective $A$-supermodules and we have
\[K_0(A\mb{-mod})=\bigoplus_{i=1}^r\bZ[V_i],\,K_0(A\mb{-pmod})=\bigoplus_{i=1}^r\bZ[P_i].\]
Note that
\beq\label{eva}\lan[P_i],[M_j]\ran=\begin{cases}
1,& \mb{if }i=j\mb{ and }M_i\mb{ is of type M},\\
2,& \mb{if }i=j\mb{ and }M_i\mb{ is of type Q},\\
0,& \mb{otherwise}.
\end{cases}\eeq

Given two superalgebras $A$ and $B$, the tensor product $A\ot B$ has the superalgebra structure defined by the following twisted multiplication:
\[(a\otimes b)(c\otimes d)=(-1)^{|b||c|}ac\otimes bd,\,a,c\in A,b,d\in B\]
and $|a\ot b|:=|a|+|b|$ for any homogeneous $a\in A,b\in B$.
Given an $A$-supermodule $M$ and a $B$-supermodule $N$, the tensor product $M\ot N$ has the $A\ot B$-supermodule structure defined by:
\[(a\otimes b)(m\otimes n)=(-1)^{|b||m|}am\otimes bn,\,a\in A,b\in B,m\in M,n\in N\]
and $|m\ot n|:=|m|+|n|$ for any homogeneous $m\in M,n\in N$.

\begin{lem}[{\cite[\S 2]{BK}}]\label{tq}
If $M$ is a finite dimensional irreducible $A$-supermodule of type Q, then there exist bases $\{v_1,\dots,v_n\}$ for $M_{\bar{0}}$ and $\{\bar{v}_1,\dots,\bar{v}_n\}$ for $M_{\bar{1}}$ such that
$\mb{span}_\bk\{v_1+\bar{v}_1,\dots,v_n+\bar{v}_n\}$ and  $\mb{span}_\bk\{v_1-\bar{v}_1,\dots,v_n-\bar{v}_n\}$
form two non-isomorphic irreducible $A$-modules. Moreover, the linear map $J_M\in\mb{End}_\bk(M)$ defined by $v_i\mapsto \bar{v}_i,\,\bar{v}_i\mapsto-v_i$ is an odd $A$-supermodule automorphism of $M$.
\end{lem}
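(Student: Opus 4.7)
The plan is to bootstrap everything from the super analogue of Schur's lemma. Since $M$ is finite-dimensional, irreducible as a supermodule of type Q, and $\bk$ is algebraically closed, $\mb{End}_A(M)$ is a two-dimensional super division algebra with even part $\bk\cdot\mb{id}_M$ and a one-dimensional odd part. Choose any nonzero $J\in\mb{End}_A(M)_{\bar 1}$; then $J^2$ lies in the even part and so equals a scalar, which cannot be zero (otherwise $\mb{Ker}\,J$ would be a proper graded $A$-sub-supermodule of $M$). After rescaling, we may assume $J^2=-\mb{id}_M$, and this $J$ will play the role of $J_M$ in the statement; by construction it is an odd $A$-supermodule automorphism.

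With $J$ in hand, I would fix any $\bk$-basis $\{v_1,\dots,v_n\}$ of $M_{\bar 0}$ and set $\bar v_i:=J(v_i)$. Since $J$ is odd and invertible, $\{\bar v_i\}$ is a basis of $M_{\bar 1}$, and $J(\bar v_i)=J^2(v_i)=-v_i$, matching the stated formula for $J_M$. The main verification is that $N_\pm:=\mb{span}_\bk\{v_i\pm\bar v_i\}$ are ordinary $A$-submodules. The supermodule identity $J(am)=(-1)^{|a|}aJ(m)$ forces two compatibilities: for $a\in A_{\bar 0}$, the matrix of $a$ on $\{\bar v_i\}$ equals the matrix of $a$ on $\{v_i\}$; for $a\in A_{\bar 1}$, if $av_i=\sum_j d_{ij}\bar v_j$ then $a\bar v_i=-J(av_i)=\sum_j d_{ij}v_j$. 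A short direct check in each parity then confirms $aw_i^\pm\in N_\pm$, where $w_i^\pm:=v_i\pm\bar v_i$.

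Finally I would handle irreducibility and non-isomorphism. For irreducibility, a proper nonzero $A$-submodule $N'\subseteq N_+$ corresponds, under $w_i^+\mapsto v_i$, to a proper nonzero subspace $V'\subsetneq M_{\bar 0}$; the compatibilities above show that $V'\oplus J(V')$ is a graded $A$-sub-supermodule of $M$, contradicting supermodule irreducibility, and the argument for $N_-$ is identical. For $N_+\not\cong N_-$, I would compute $\dim_\bk\mb{End}_A^{\us{ord}}(M)$. Every ordinary $A$-endomorphism decomposes into a grading-preserving and a grading-reversing part, each still ordinary $A$-linear; the first summand coincides with $\mb{End}_A(M)_{\bar 0}=\bk$, while flipping the sign on $M_{\bar 1}$ sets up a $\bk$-linear bijection between the second summand and $\mb{End}_A(M)_{\bar 1}=\bk J$. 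Hence $\dim\mb{End}_A^{\us{ord}}(M)=2$, whereas an isomorphism $N_+\cong N_-$ would force this ring to contain $M_2(\bk)$ (four-dimensional) by ordinary Schur, a contradiction. The delicate step of the plan is the sign bookkeeping—both in checking $A$-stability of $N_\pm$ and in the ordinary/super endomorphism bijection—every other step is either Schur-type input or clean linear algebra.
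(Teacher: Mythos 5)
The paper cites this lemma to \cite[\S 2]{BK} without proof, so there is nothing in-text to compare against; here is an assessment of the proposal on its own merits.

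Your verifications are correct: the sign analysis showing $N_\pm = \mb{span}_\bk\{v_i \pm \bar v_i\}$ are $A$-stable, the irreducibility argument via the graded submodule $V' \oplus J(V')$, and the bijection (twisting by the parity involution $\sigma := \mb{id}_{M_{\bar 0}}\oplus(-\mb{id}_{M_{\bar 1}})$) between grading-reversing ordinary $A$-endomorphisms and $\mb{End}_A(M)_{\bar 1}$, which gives $\dim_\bk\mb{End}_A^{\us{ord}}(M) = 2$ and hence $N_+ \not\cong N_-$. The one place to push back is the opening move: you take as given, citing super Schur, that $\mb{End}_A(M)$ has a nonzero odd part. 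That $\mb{End}_A(M)$ is a super division algebra with $\mb{End}_A(M)_{\bar 0} = \bk\,\mb{id}_M$ is routine, but the nonvanishing of the odd part is precisely where the type-Q hypothesis has to enter, and it is also precisely what the last sentence of the lemma asserts; assuming it outright comes close to assuming the conclusion. To fill this in, produce $J$ from the reducibility itself: take a proper nonzero ordinary $A$-submodule $N\subseteq M$. The graded subspace $(N\cap M_{\bar 0})\oplus(N\cap M_{\bar 1})$ is an $A$-sub-supermodule contained in $N$, hence zero, so $N$ meets each $M_i$ trivially. Since $\sigma(am)=(-1)^{|a|}a\sigma(m)$, $\sigma(N)$ is again an ordinary $A$-submodule; $N+\sigma(N)$ is graded (because $2n_0=n+\sigma(n)$ and $2n_1=n-\sigma(n)$ for $n\in N$) hence equals $M$, and $N\cap\sigma(N)$ is graded hence zero, so $M=N\oplus\sigma(N)$. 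The projection onto $N$ along $\sigma(N)$ is a non-scalar ordinary endomorphism; subtracting its (scalar) grading-preserving part leaves a nonzero grading-reversing ordinary endomorphism, which your own $\sigma$-twisting bijection converts into the required nonzero $J\in\mb{End}_A(M)_{\bar 1}$. The rest of your argument then goes through unchanged, and this also restores the intended logical order in the paper, where Lemma~\ref{tq} is established first and Lemma~\ref{Schur} (which refers to $J_M$) is deduced afterward.
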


\begin{lem}[Schur's lemma]\label{Schur}
If $M$ is a finite dimensional irreducible $A$-supermodule, then
\[\mb{End}_A(M)=\begin{cases}
\mb{span}_\bk\{\mb{id}_M\},& \mb{if }M\mb{ is of type M},\\
\mb{span}_\bk\{\mb{id}_M,J_M\},& \mb{if }M\mb{ is of type Q},
\end{cases}\]
where $J_M$ is as in Lemma \ref{tq}.
 \end{lem}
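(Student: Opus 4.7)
The plan is to execute the standard graded division algebra argument: first cut down the possibilities for $\mb{End}_A(M)$ by showing it is a $\bZ_2$-graded division algebra, then pin down the even and odd components separately using algebraic closedness of $\bk$, and finally match the resulting dichotomy with the types M and Q by invoking Lemma \ref{tq}.

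I would argue as follows. Step 1: any nonzero homogeneous $f:M\rw M$ is invertible. Because $f$ is homogeneous, $\mb{ker}\,f$ and $\mb{im}\,f$ are $\bZ_2$-graded subspaces; the sign $(-1)^{|f||a|}$ in super-equivariance is a unit, so they are $A$-stable, i.e.\ super submodules. Irreducibility of $M$ forces $f=0$ or $f$ an isomorphism, so $\mb{End}_A(M)$ is a graded division superalgebra. Step 2: the even part $D_0:=\mb{End}_A(M)_{\bar{0}}$ is a finite-dimensional division $\bk$-algebra; over algebraically closed $\bk$ an eigenvalue argument yields $D_0=\bk\,\mi_M$. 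Step 3: left multiplication by a nonzero odd $f$ is a $\bk$-linear bijection $D_0\rw\mb{End}_A(M)_{\bar{1}}$, so the odd part is zero or one-dimensional. In the latter case $f^2\in D_0=\bk\,\mi_M$ is a nonzero scalar $c$, and rescaling $f$ by $\mu$ with $\mu^2=-c^{-1}$ produces a generator $J$ with $J^2=-\mi_M$ matching $J_M$ from Lemma \ref{tq}.

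The remaining task, and the main obstacle, is to match these two algebraic cases with the type dichotomy defined via reducibility of $M$ as an ordinary $A$-module. The type Q direction is free: Lemma \ref{tq} directly supplies an odd super-automorphism $J_M$, so the odd part is nonzero, hence equals $\bk\,J_M$. For type M I would argue by contradiction: given a nonzero odd $f$, define $\tilde{f}\in\mb{End}_\bk(M)$ by $\tilde{f}|_{M_{\bar{0}}}=f|_{M_{\bar{0}}}$ and $\tilde{f}|_{M_{\bar{1}}}=-f|_{M_{\bar{1}}}$. A short case analysis on the parities of $a\in A$ and $m\in M$ shows that the extra sign on $M_{\bar{1}}$ absorbs the $(-1)^{|a|}$ coming from super-equivariance of $f$, so $\tilde{f}(am)=a\tilde{f}(m)$ in the ordinary sense. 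Since $\tilde{f}$ shifts parity and is nonzero, it is not a scalar multiple of $\mi_M$, contradicting classical Schur for $M$ viewed as an irreducible ordinary $A$-module.

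This sign-flip trick, passing between odd super-morphisms and grading-shifting ordinary equivariant maps, is the only step that is not purely formal; everything else is a direct superization of the usual Schur argument. Once it is in place, the two pieces of the case statement follow immediately.
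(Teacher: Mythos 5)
Your proof is correct and complete. The paper itself states this lemma without proof, implicitly deferring to the standard reference (Brundan--Kleshchev, cited in Lemma 2.3.1), so there is no paper proof to compare against; what you have written is precisely the standard super-Schur argument that the citation stands in for. Your three steps -- (i) invertibility of nonzero homogeneous morphisms via irreducibility, using that $\ker f$ and $\operatorname{im} f$ are graded $A$-stable subspaces, (ii) $D_0 = \bk\,\mathrm{id}_M$ via the eigenvalue trick over the algebraically closed field, and (iii) the odd part is zero- or one-dimensional via left multiplication by an odd unit, normalized so that $J^2 = -\mathrm{id}_M$ -- are all sound. The sign-flip construction $\tilde f$ that converts a nonzero odd super-morphism into a genuine parity-shifting ordinary $A$-endomorphism is exactly the right device to rule out odd morphisms in type M, and it is the same mechanism the paper hints at when it remarks that $m\mapsto(-1)^{|m|}m$ is an ordinary $A$-module map $M\to\Pi M$; composing your $f$ with that sign map recovers your $\tilde f$ up to sign. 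One very minor point you could make explicit: if $M$ happens to be purely even or purely odd, any odd morphism is automatically zero, so the contradiction argument is only needed in the case $M_{\bar 0},M_{\bar 1}\neq 0$, where parity-shifting genuinely contradicts classical Schur.
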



\subsection{Towers of superalgebras}
Now mainly for 0-Hecke-Clifford algebras, we introduce the following important definition (see \cite[Def. 4.1]{RS}) which extends the original one in \cite[\S 3]{BL}. Further discussion can be found in \cite{BLL}.
\begin{defn}
Let $A=\bigoplus_{n\geq0}A_n$ be a graded superalgebra over $\bk$ with multiplication $\mu:A\otimes A\rw A$. Then $A$ is called a \textit{tower of superalgebras} if the following conditions hold:

(1) Each graded component $A_n$ is a finite dimensional superalgebra with unit $1_n$, and $A_0\cong\bk$.

(2) The restriction $\mu_{m,n}:A_m\otimes A_n\rightarrow A_{m+n}$ of multiplication $\mu$ is a homomorphism of superalgebras for all $m,n\geq0$, sending $1_m\ot1_n$ to $1_{m+n}$.

(3) For all $m,n\geq0$, $\mu_{m,n}$ induces a two-sided projective $A_m\otimes A_n$-module structure on  $A_{m+n}$ defined by
\[(a\ot b).c:=\mu_{m,n}(a\ot b)c,\,c.(a\ot b):=c\mu_{m,n}(a\ot b)\]
for any $a\in A_m,b\in A_n, c\in A_{m+n}$.
\end{defn}

For a tower $A=\bigoplus_{n\geq0}A_n$ of superalgebras, we define the categories
\[A\mb{-mod}:=\bigoplus_{n\geq0}A_n\mb{-mod},\,
A\mb{-pmod}:=\bigoplus_{n\geq0}A_n\mb{-pmod}.\]
and the corresponding Grothendieck groups
\[\mG(A):=\bigoplus_{n\geq0}K_0(A_n\mb{-mod}),\,
\mK(A):=\bigoplus_{n\geq0}K_0(A_n\mb{-pmod}).\]

Both the Cartan map (\ref{car}) and the pair (\ref{pair}) can be linearly extended to one for towers of superalgebras. Define
\beq\label{car1}\chi:=\bigoplus_{n\geq0}\chi_n:\mK(A)\rw\mG(A),\eeq
where $\chi_n$ is the Cartan map (\ref{car}) of $A_n\,(n\in\bN)$. And
\[\lan \cdot,\cdot \ran: \mK(A)\times\mG(A)\rw\bZ\]
by
\beq\label{pa1}
\lan [P],[M] \ran:=\begin{cases}
\mb{dim}_\bk\mb{Hom}_{A_n}(P,M),& P\in A_n\mb{-pmod},
M\in A_n\mb{-mod}\mb{ for some }$n$,\\
0,&\mb{otherwise}.
\end{cases}\eeq

For any $r\in\bN$, write $A_{n_1,\dots,n_r}:=A_{n_1}\ot\cdots \ot A_{n_r}$ and define
\[A\mb{-mod}^{\ot r}:=\bigoplus_{n_1,\dots,n_r\geq0}A_{n_1,\dots,n_r}\mb{-mod},\,
A\mb{-pmod}^{\ot r}:=\bigoplus_{n_1,\dots,n_r\geq0}A_{n_1,\dots,n_r}\mb{-pmod}.\]
Note that the formula (\ref{tensor}) can be further extended on $A\mb{-pmod}^{\ot r}\times A\mb{-mod}^{\ot r}$.

For any $\si\in\fs_r$, define $\tau_\si:A\mb{-mod}^{\ot r}\rw A\mb{-mod}^{\ot r}$ to be the functor twisting module structure by the following superalgebra isomorphism from $A_{n_1,\dots,n_r}$
to $A_{n_{\si^{-1}(1)},\dots,n_{\si^{-1}(r)}}$:
\beq\label{perm}a_1\ot\cdots\ot a_r\mapsto(-1)^{d_\si}
a_{\si^{-1}(1)}\ot\cdots\ot a_{\si^{-1}(r)},\eeq
where $d_\si:=\sum_{1\leq i<j\leq r\atop\si(i)>\si(j)}|a_i||a_j|$. Write $\tau_{ij}:=\tau_{s_{ij}}$ for short. For any $M\in A_{n_{\si^{-1}(1)},\cdots,n_{\si^{-1}(r)}}\mb{-mod}$, we denote its twisted $A_{n_1,\dots,n_r}$-supermodule structure by $^{\tau_\si}M$.

Now we have the following result.
\begin{lem}\label{twist}
Given $M_i\in A_{n_i}\mb{-mod}\,(1\leq i\leq r)$ and $\si\in\fs_n$, there exists the following (even) $A_{n_1,\dots,n_r}$-supermodule isomorphism
\[\begin{split}
\Psi:\,&M_1\ot\cdots\ot M_r\rw{^{\tau_\si}(M_{\si^{-1}(1)}\ot\cdots\ot M_{\si^{-1}(r)})},\\
&m_1\ot\cdots\ot m_r\mapsto (-1)^{d'_\si}m_{\si^{-1}(1)}\ot\cdots\ot m_{\si^{-1}(r)},
\end{split}\]
where $d'_\si:=\sum_{1\leq i<j\leq r\atop\si(i)>\si(j)}|m_i||m_j|$.
\end{lem}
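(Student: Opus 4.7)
The plan is to check in order that $\Psi$ is a well-defined even $\bk$-linear map, that it is a bijection, and that it intertwines the $A_{n_1,\dots,n_r}$-actions; only the last item requires real work. For the first two points, the rule defining $\Psi$ on pure tensors is multilinear in the $m_i$ and extends to the full tensor product because both the scalar $(-1)^{d'_\si}$ and the total $\bZ_2$-degree of the image depend only on the multiset $\{|m_1|,\dots,|m_r|\}$; this also shows $\Psi$ is even. Bijectivity follows by writing down the obvious candidate inverse built from $\si^{-1}$ and the sign $(-1)^{d'_{\si^{-1}}}$, and checking that the two permutations and their Koszul signs compose correctly.

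The intertwining property is the crux. Computing both sides on an element $a_1\ot\cdots\ot a_r\in A_{n_1,\dots,n_r}$ acting on $m_1\ot\cdots\ot m_r$, the image under $\Psi$ of $(a_1\ot\cdots\ot a_r)(m_1\ot\cdots\ot m_r)$ is, by the tensor action convention,
\[(-1)^{\sum_{k<l}|a_l||m_k|}\cdot(-1)^{\sum_{i<j,\,\si(i)>\si(j)}(|a_i|+|m_i|)(|a_j|+|m_j|)}\]
times $a_{\si^{-1}(1)}m_{\si^{-1}(1)}\ot\cdots\ot a_{\si^{-1}(r)}m_{\si^{-1}(r)}$. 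On the other hand, the $\tau_\si$-twisted action of $a_1\ot\cdots\ot a_r$ on $\Psi(m_1\ot\cdots\ot m_r)$ equals, by (\ref{perm}), the sign $(-1)^{d_\si}$ times the natural action of $a_{\si^{-1}(1)}\ot\cdots\ot a_{\si^{-1}(r)}$ on $m_{\si^{-1}(1)}\ot\cdots\ot m_{\si^{-1}(r)}$, producing on the same rearranged tensor the scalar
\[(-1)^{d_\si}\cdot(-1)^{d'_\si}\cdot(-1)^{\sum_{k<l}|a_{\si^{-1}(l)}||m_{\si^{-1}(k)}|}.\]
After expanding $(|a_i|+|m_i|)(|a_j|+|m_j|)$ and cancelling the common $d_\si+d'_\si$ contribution, equality of these two scalars is equivalent to the mod-$2$ identity
\[\sum_{k<l}|a_l||m_k|+\sum_{i<j,\,\si(i)>\si(j)}\bigl(|a_i||m_j|+|m_i||a_j|\bigr)\equiv\sum_{\si(q)<\si(p)}|a_p||m_q|,\]
which one verifies by splitting the first sum according to whether $\si$ preserves the pair $(k,l)$ and relabeling the second sum via $(p,q)=(j,i)$.

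The principal obstacle is simply the careful bookkeeping of Koszul signs in the above identity; there is no conceptual difficulty, as this is exactly the mixed-degree identity that underlies the symmetric monoidal structure of super vector spaces. If desired, one could sidestep the general computation by first handling the case $\si=s_{k,k+1}$ (where the verification reduces to a single Koszul sign per adjacent pair) and then extending to arbitrary $\si\in\fs_r$ by induction on the length, using that both $\Psi$ and $\tau_\si$ are associative in $\si$ up to even isomorphism.
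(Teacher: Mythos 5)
Your proof is correct and takes essentially the same route as the paper's: both reduce the intertwining property to the same mod-$2$ Koszul-sign identity obtained by comparing $\Psi((a_1\ot\cdots\ot a_r).(m_1\ot\cdots\ot m_r))$ with $(a_1\ot\cdots\ot a_r)._{\tau_\si}\Psi(m_1\ot\cdots\ot m_r)$, and both verify the identity by splitting $\sum_{k<l}|a_l||m_k|$ according to whether $\si(k)<\si(l)$ or $\si(k)>\si(l)$ so that the cross-terms from $\sum_{\si(i)>\si(j)}(|a_i||m_j|+|a_j||m_i|)$ cancel. The paper omits the remarks on well-definedness, evenness, and bijectivity (all routine), and your passing suggestion of an induction on adjacent transpositions is an alternative the paper does not use; otherwise the arguments coincide.
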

\begin{proof}On one hand,
\[\begin{split}
\Psi&((a_1\ot\cdots\ot a_r).(m_1\ot\cdots\ot m_r))=
(-1)^{\sum_{1\leq i<j\leq r}|a_j||m_i|}\Psi(a_1m_1\ot\cdots\ot a_rm_r)\\
&=(-1)^{\sum_{1\leq i<j\leq r}|a_j||m_i|+
\sum_{1\leq i<j\leq r\atop\si(i)>\si(j)}(|a_i|+|m_i|)(|a_j|+|m_j|)}
a_{\si^{-1}(1)}m_{\si^{-1}(1)}\ot\cdots\ot a_{\si^{-1}(r)}m_{\si^{-1}(r)}.
\end{split}\]
On the other hand,
\[\begin{split}
(a_1&\ot\cdots\ot a_r)._{\tau_\si}\Psi(m_1\ot\cdots\ot m_r)\\
&=(-1)^{\sum_{1\leq i<j\leq r\atop\si(i)>\si(j)}(|a_i||a_j|+|m_i||m_j|)}
(a_{\si^{-1}(1)}\ot\cdots\ot a_{\si^{-1}(r)}).(m_{\si^{-1}(1)}\ot\cdots\ot m_{\si^{-1}(r)}),\\
&=(-1)^{\sum_{1\leq i<j\leq r\atop\si(i)>\si(j)}(|a_i||a_j|+|m_i||m_j|)+\sum_{1\leq i<j\leq r}|a_{\si^{-1}(j)}||m_{\si^{-1}(i)}|}
a_{\si^{-1}(1)}m_{\si^{-1}(1)}\ot\cdots\ot a_{\si^{-1}(r)}m_{\si^{-1}(r)}.
\end{split}
\]
They are equal since
\[\begin{split}
\sum_{1\leq i<j\leq r}&|a_j||m_i|+\sum_{1\leq i<j\leq r\atop\si(i)>\si(j)}(|a_i||m_j|+|a_j||m_i|)\\
&=\sum_{1\leq i<j\leq r\atop\si(i)<\si(j)}|a_j||m_i|+\sum_{1\leq i<j\leq r\atop\si(i)>\si(j)}|a_i||m_j|
=\sum_{1\leq i<j\leq r}|a_{\si^{-1}(j)}||m_{\si^{-1}(i)}|
\end{split}\]
in $\bZ_2$.
\end{proof}


Given an even homomorphism $f:B\rw A$ of superalgebras, the usual induction and restriction functors are defined by
\[\begin{array}{l}
\mb{Ind}^A_B: B\mb{-mod}\rw A\mb{-mod},\,\mb{Ind}^A_B M:=A^f\ot_B N,\,N\in B\mb{-mod},\\
\mb{Res}^A_B: A\mb{-mod}\rw B\mb{-mod},\,\mb{Res}^A_B M:=\mb{Hom}_A(A^f,M)\cong {^f}A\ot_A M,\,M\in A\mb{-mod},
\end{array}\]
where the left $B$-action on $\mb{Hom}_A(A^f,M)$ is defined by
$(bg)(a)=(-1)^{|b|(|a|+|g|)}g(af(b)),\,a\in A,b\in B,g\in\mb{Hom}_A(A,M)$, and the above isomorphism is given by $g\mapsto1_A\ot g(1_A)$.

Now if a tower $A$ of superalgebras is fixed, we abbreviate $\mb{Ind}_{A_m\ot A_n}^{A_{m+n}}$ as
$\mb{Ind}_{m,n}^{m+n}$ and $\mb{Res}_{A_m\ot A_n}^{A_{m+n}}$ as $\mb{Res}_{m,n}^{m+n}$, which base on the multiplication
$\mu_{m,n}:A_m\otimes A_n\rightarrow A_{m+n}$. Define \[\mb{Ind}:=\bigoplus_{m,n\geq0}\mb{Ind}_{m,n}^{m+n},\,\mb{Res}:=\bigoplus_{m,n\geq0}\mb{Res}_{m,n}^{m+n}.\]

We are interested in those pair $(\mK(A),\mG(A))$, which forms a dual pair of graded Hopf algebras via such induction and restriction.
For such duality, we also need the notion of Frobenius superalgebras (see \cite[\S 6]{RS}).
\begin{defn}
A finite dimensional superalgebra $A$ is called a \textit{Frobenius superalgebra} if one of the following three equivalent conditions holds:

(a) There is an even left $A$-supermodule isomorphism $\rho:A\rw A^*$.

(b) There exists a nondegenerate invariant even $\bk$-bilinear form
$(\cdot,\cdot):A\times A\rw\bk$ such that $(a,b)=0$ if $a\in A_{\bar{0}},b\in A_{\bar{1}}$ or $a\in A_{\bar{1}},b\in A_{\bar{0}}$, and also
$(ab,c)=(a,bc),\,a,b,c\in A$.

(c) There exists an even $\bk$-linear map
$tr:A\rw\bk$, called the \textit{trace map}, such that $\mb{ker }tr$ contains no non-zero left ideals of $A$.

The relationship between these three conditions is as follows:
\[\rho(b)(a)=(-1)^{|a||b|}(a,b),\,(a,b)=tr(ab),\,a,b\in A.\]
There exists an even automorphism $\varphi$ of $A$ satisfying $(a,b)=(-1)^{|a||b|}(\varphi(b),a),\,a,b\in A$. This automorphism is called the \textit{Nakayama automorphism} of $A$.
\end{defn}

\begin{lem}
Let $A_i\,(i=1,2)$ be two Frobenius superalgebras with trace maps $tr_i\,(i=1,2)$ and Nakayama automorphisms $\varphi_i\,(i=1,2)$ respectively. Then $A_1\ot A_2$ is also a Frobenius superalgebra with trace map $tr_1\ot tr_2$ and Nakayama automorphism $\varphi_1\ot \varphi_2$.
\end{lem}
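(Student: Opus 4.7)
The plan is to verify the three equivalent conditions defining a Frobenius superalgebra for $A:=A_1\ot A_2$, with the candidate trace $tr:=tr_1\ot tr_2$ and candidate Nakayama automorphism $\varphi:=\varphi_1\ot\varphi_2$. Since $tr_i$ and $\varphi_i$ are even, both candidates are even by construction, so the evenness conditions come for free. I will mostly work with condition (b) (the bilinear form), using condition (a) only as a shortcut for nondegeneracy.

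First I would set $(x,y):=tr(xy)$ on $A$ and unpack the twisted multiplication of the tensor-product superalgebra: for homogeneous $x=a_1\ot b_1,\,y=a_2\ot b_2$,
\[(a_1\ot b_1,a_2\ot b_2)=(-1)^{|b_1||a_2|}(a_1,a_2)_1\,(b_1,b_2)_2.\]
This vanishes whenever $|x|+|y|$ is odd (since each factor form does on mixed-parity inputs), so $(\cdot,\cdot)$ is even. Invariance $(xy,z)=(x,yz)$ reduces to associativity of the twisted product together with the two invariances $(ab,c)_i=(a,bc)_i$; this is a routine sign check.

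Second, for nondegeneracy I would invoke condition (a) for each factor: one has even left $A_i$-supermodule isomorphisms $\rho_i:A_i\rw A_i^*$. The map $\rho_1\ot\rho_2:A\rw A_1^*\ot A_2^*\cong A^*$, composed with the natural pairing $(f\ot g)(a\ot b):=(-1)^{|g||a|}f(a)g(b)$, is an even isomorphism of left $A$-supermodules once one checks (by bookkeeping the signs in the twisted multiplication on $A$ and the induced left action on $A^*$) that the second map is $A$-linear in the super sense. This packages nondegeneracy cleanly without choosing bases.

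Third, for the Nakayama automorphism I would expand both sides of the desired identity $(x,y)=(-1)^{|x||y|}(\varphi(y),x)$ on homogeneous $x=a_1\ot b_1$, $y=a_2\ot b_2$. Using $(\varphi_i(b),a)_i=(-1)^{|a||b|}(a,b)_i$ from the Nakayama condition for each factor, together with the evenness $|\varphi_i(c)|=|c|$, one obtains after collecting signs
\[(\varphi(y),x)=(-1)^{|b_2||a_1|+|a_1||a_2|+|b_1||b_2|}(a_1,a_2)_1(b_1,b_2)_2,\]
and multiplying by $(-1)^{|x||y|}=(-1)^{(|a_1|+|b_1|)(|a_2|+|b_2|)}$ collapses the exponent modulo $2$ to $|b_1||a_2|$, matching $(x,y)$. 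I expect the only mildly delicate step to be this sign bookkeeping, as well as the verification that $A_1^*\ot A_2^*\cong A^*$ as left $A$-supermodules; once those are settled, all three conditions follow simultaneously and the identification of $\varphi_1\ot\varphi_2$ as the Nakayama automorphism is forced by the bilinear form.
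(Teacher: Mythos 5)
The paper states this lemma without proof, so there is no argument of its own to compare against; it is treated as a routine consequence of the definitions. Your proposal supplies a correct verification. The sign bookkeeping for the Nakayama condition is right: with $x=a_1\ot b_1$, $y=a_2\ot b_2$, both $(x,y)=(-1)^{|b_1||a_2|}(a_1,a_2)_1(b_1,b_2)_2$ and the reduction of $(-1)^{|x||y|}(\varphi(y),x)$ to the exponent $|b_1||a_2|$ modulo $2$ check out (one uses that the factor forms are even, so $|a_1|=|a_2|$ and $|b_1|=|b_2|$ whenever the product of scalars is nonzero, which lets you freely swap degrees inside the exponent). The invariance $(xy,z)=(x,yz)$ likewise reduces, after a short sign cancellation, to the two factor invariances. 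One remark on the one step you flag as delicate: rather than constructing the isomorphism $A_1^*\ot A_2^*\cong(A_1\ot A_2)^*$ of left $A$-supermodules, nondegeneracy can be handled more directly by choosing homogeneous bases of $A_1,A_2$ and noting that, thanks to evenness of $(\cdot,\cdot)_1$, the sign $(-1)^{|b_j||a_k|}$ in the Gram entry at $((i,j),(k,l))$ may be replaced by $(-1)^{|b_j||a_i|}$ whenever the entry is nonzero, so the Gram matrix is a diagonal sign matrix times the Kronecker product $G_1\ot G_2$ of two invertible matrices; this avoids the super-linearity check entirely. Either route works, and both are consistent with what the paper leaves implicit.
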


For a tower $A$ of superalgebras such that each $A_n$ is a Frobenius superalgebra with Nakayama automorphism $\varphi_n$. We abuse the notation $\varphi_n$ to be the automorphism on $A_n$-mod and $A_n$-pmod twisting modules by $\varphi_n$ of $A_n$. Then $\varphi:=\bigoplus_{n\geq0}\varphi_n$ is an automorphism of $A$-mod and $A$-pmod, thus induces an automorphism on $\mK(A)$ and $\mG(A)$.

\begin{prop}\cite[Prop. 6.7]{RS}\label{dual}
Under the above assumption, the induction is conjugate right adjoint to restriction with conjugation $\varphi$, i.e. for any $m,n\geq0$, $M\in A_m\mb{-mod},N\in A_n\mb{-mod},L\in A_{m+n}\mb{-mod}$, the following functorial isomorphism holds:
\[\mb{Hom}_{A_{m+n}}\lb L,\mb{Ind}_{m,n}^{m+n}(M\ot N)\rb\cong
\mb{Hom}_{A_m\ot A_n}\lb{_\varphi\mb{Res}_{m,n}^{m+n}}(L),M\ot N\rb,\]
where $_\varphi\mb{Res}_{m,n}^{m+n}:=(\varphi_m\ot\varphi_n)\circ\mb{Res}_{m,n}^{m+n}\circ\varphi^{-1}_{m+n}$.
\end{prop}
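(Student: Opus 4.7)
The plan is to recognize the inclusion $\mu_{m,n}\colon A_m\otimes A_n\hookrightarrow A_{m+n}$ as a \emph{Frobenius extension} of superalgebras and then derive the claimed adjunction from twisted Frobenius reciprocity for such extensions. The essential inputs are already in place: tower axiom (3) gives that $A_{m+n}$ is finitely generated projective as a left (and right) $A_m\otimes A_n$-module, while the three individual Frobenius structures supply trace maps $tr_{m+n}$ on $A_{m+n}$ and $tr_m\ot tr_n$ on $A_m\ot A_n$ (by the preceding lemma), with Nakayama automorphisms $\varphi_{m+n}$ and $\varphi_m\ot\varphi_n$.

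First I would build a \emph{relative trace} $E\colon A_{m+n}\to A_m\ot A_n$ uniquely characterized by $(tr_m\ot tr_n)\circ E=tr_{m+n}$. Its existence is forced by nondegeneracy of the Frobenius forms together with finite-dimensionality: the pairing $(x,a)\mapsto tr_{m+n}(\mu_{m,n}(x)\,a)$ is nondegenerate in $x\in A_m\ot A_n$ for each fixed $a\in A_{m+n}$, so one may solve for $E(a)$. With $E$ in hand, define
\[\Phi\colon A_{m+n}\longrightarrow\mb{Hom}_{A_m\ot A_n}\bigl(A_{m+n},A_m\ot A_n\bigr),\qquad \Phi(a)(b)=E(ab),\]
with the appropriate sign convention. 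Using invariance of both Frobenius forms together with the Nakayama identity $(x,y)=(-1)^{|x||y|}(\varphi(y),x)$, one checks that $\Phi$ is an isomorphism of $(A_{m+n},A_m\ot A_n)$-bimodules provided the right $A_m\ot A_n$-action on the target is twisted by the \emph{relative Nakayama}
\[\beta:=(\varphi_m\ot\varphi_n)\circ\varphi_{m+n}^{-1}.\]

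Second, I would chain three natural isomorphisms to produce the desired adjunction:
\[
\begin{aligned}
\mb{Hom}_{A_{m+n}}\!\bigl(L,\mb{Ind}_{m,n}^{m+n}(M\ot N)\bigr)
 &\xrightarrow{\Phi\ot\,\mi}\mb{Hom}_{A_{m+n}}\!\bigl(L,\mb{Hom}_{A_m\ot A_n}(A_{m+n},{}_{\beta}(M\ot N))\bigr)\\
 &\xrightarrow{\ \text{adj}\ }\mb{Hom}_{A_m\ot A_n}\!\bigl(\mb{Res}_{m,n}^{m+n}(L),{}_{\beta}(M\ot N)\bigr)\\
 &\xrightarrow{\ \sim\ }\mb{Hom}_{A_m\ot A_n}\!\bigl({}_{\varphi}\mb{Res}_{m,n}^{m+n}(L),M\ot N\bigr).
\end{aligned}
\]
The first arrow uses $\Phi$ to rewrite induction as a twisted coinduction; the second is the standard tensor-hom adjunction applied to $A_{m+n}$ viewed as an $(A_{m+n},A_m\ot A_n)$-bimodule; the third untwists the target by retwisting the source via the identity $\mb{Hom}(V,{}_{\beta}W)\cong\mb{Hom}({}_{\beta^{-1}}V,W)$ together with the fact that twisting $\mb{Res}L$ on the left by $\beta^{-1}$ is, by definition, $(\varphi_m\ot\varphi_n)\circ\mb{Res}\circ\varphi_{m+n}^{-1}={}_{\varphi}\mb{Res}$.

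The main obstacle is the superalgebra bookkeeping: every passage to duals or Hom-spaces introduces Koszul signs, and the Nakayama twists interact non-trivially with the $\bZ_2$-grading. In particular, one must verify that the relative Nakayama comes out as $\beta=(\varphi_m\ot\varphi_n)\circ\varphi_{m+n}^{-1}$ on the nose (and not its inverse or a conjugate), and that the resulting twist on the source is exactly $(\varphi_m\ot\varphi_n)\circ(-)\circ\varphi_{m+n}^{-1}$ rather than a variant. Checking each of these sign assertions on homogeneous elements is tedious but mechanical once the underlying Frobenius-extension framework is set up.
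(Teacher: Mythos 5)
The paper does not supply its own proof of this proposition: it is quoted verbatim from Rosso--Savage \cite[Prop.~6.7]{RS}, so there is no in-paper argument to compare against. Judged on its own terms, your Frobenius-reciprocity strategy (relative trace, bimodule isomorphism rewriting induction as a twisted coinduction, tensor-hom adjunction) is the correct framework and is essentially what Rosso--Savage do. However, there is a genuine gap in how you encode the twist, and it is not merely the sign bookkeeping you flag at the end.

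The object $\beta := (\varphi_m\ot\varphi_n)\circ\varphi_{m+n}^{-1}$ does not define an automorphism of $A_m\ot A_n$: the codomain of $\varphi_{m+n}^{-1}$ is $A_{m+n}$, not $A_m\ot A_n$, so the composition is ill-typed. More fundamentally, the twist in $_\varphi\mb{Res}_{m,n}^{m+n}$ cannot be repackaged as twisting by \emph{any} single automorphism $\sigma$ of $A_m\ot A_n$, because the Nakayama automorphism $\varphi_{m+n}$ need not preserve the parabolic subalgebra $\mu_{m,n}(A_m\ot A_n)\subseteq A_{m+n}$. (In the very setting this paper cares about, Prop.~\ref{flip} shows $\varphi_{m+n}\circ\tilde\mu_{m,n}=\tilde\mu_{n,m}\circ\varphi_{n,m}\circ f_{12}$, so $\varphi_{m+n}$ sends $\mH\mC_{m,n}$ to the \emph{different} parabolic $\mH\mC_{n,m}$.) Consequently your last step --- ``twisting $\mb{Res}L$ on the left by $\beta^{-1}$ is, by definition, $(\varphi_m\ot\varphi_n)\circ\mb{Res}\circ\varphi_{m+n}^{-1}$'' --- is false: for an automorphism $\sigma$ of $A_m\ot A_n$, the module ${}^{\sigma}\mb{Res}\,L$ has action $b\cdot l=\mu_{m,n}(\sigma(b))\,l$, whereas $_\varphi\mb{Res}\,L$ has action $b\cdot l=\varphi_{m+n}^{-1}\bigl(\mu_{m,n}((\varphi_m\ot\varphi_n)(b))\bigr)l$; these are functors along \emph{different} ring maps $A_m\ot A_n\to A_{m+n}$, and they coincide only if $\varphi_{m+n}^{-1}\circ\mu_{m,n}\circ(\varphi_m\ot\varphi_n)$ factors through $\mu_{m,n}$, which is not assumed and fails in the application.

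The repair is to carry the twist on the \emph{ring map}, not on the target module $M\ot N$. Set $\nu:=\varphi_{m+n}^{-1}\circ\mu_{m,n}\circ(\varphi_m\ot\varphi_n)\colon A_m\ot A_n\to A_{m+n}$ and prove the bimodule isomorphism
\[A_{m+n}\;\cong\;\mb{Hom}_{A_m\ot A_n}\bigl({}^{\nu}A_{m+n},\,A_m\ot A_n\bigr)\]
(where ${}^{\nu}A_{m+n}$ carries the left $A_m\ot A_n$-structure via $\nu$), using the relative trace exactly as you suggest, and being careful about on which side of the argument the Nakayama correction lands. Then the tensor-hom adjunction \emph{for the ring map $\nu$} gives
\[\mb{Hom}_{A_{m+n}}\bigl(L,\mb{Ind}_{m,n}^{m+n}(M\ot N)\bigr)\cong\mb{Hom}_{A_m\ot A_n}\bigl(\mb{Res}^{\nu}L,\,M\ot N\bigr),\]
and $\mb{Res}^{\nu}$ is precisely $(\varphi_m\ot\varphi_n)\circ\mb{Res}_{m,n}^{m+n}\circ\varphi_{m+n}^{-1}={}_{\varphi}\mb{Res}_{m,n}^{m+n}$ by unwinding definitions. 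This keeps the functorial form of $_\varphi\mb{Res}$ intact and avoids inventing a nonexistent $\beta$.
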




%

\section{Representation theory of 0-Hecke-Clifford algebras}

In this section we further study the representation theory of 0-Hecke-Clifford algebras in detail, referring to the discussion in \cite[\S 5]{BHT}. The \textit{0-Hecke-Clifford algebra} $HCl_n(0)$ of type A is an algebra generated by $T_i,\,1\leq i\leq n-1;\,c_j,\,1\leq j\leq n$, where $T_i$'s generate the \textit{0-Hecke algebra} $H_n(0)$ with relations
\[\begin{array}{l}
T_i^2=-T_i,\,1\leq i\leq n-1,\\
T_iT_j=T_jT_i,\,|i-j|>1,\\
T_iT_{i+1}T_i=T_{i+1}T_iT_{i+1},\,1\leq i\leq n-2,
\end{array}\]
and $c_j$'s generate the \textit{Clifford algebra} $Cl_n$ with relations
\[c_i^2=-1,\,1\leq i\leq n;\quad c_ic_j=-c_jc_i,\,i\neq j,\]
while the two parts satisfy the following cross-relations
\[\begin{array}{l}
T_ic_j=c_jT_i,\,j\neq i,i+1,\\
T_ic_i=c_{i+1}T_i,\,1\leq i\leq n-1,\\
(T_i+1)c_{i+1}=c_i(T_i+1),\,1\leq i\leq n-1.
\end{array}\]
Let $\mbox{deg}(T_i)=\bar{0},\,\mbox{deg}(c_j)=\bar{1}$, then $HCl_n(0)$ becomes a $\bZ_2 $-graded superalgebra, with a linear basis $\{c_DT_w:D\subseteq[n],w\in\fs_n\}$, where $c_D:=c_{i_1}\cdots c_{i_r}$ for $D=\{i_1<\cdots<i_r\}$ and $T_w:=T_{j_1}\cdots T_{j_s}$ for any reduced expression $w=s_{j_1}\cdots s_{j_s}$. Throughout this paper, we only consider 0-Hecke(-Clifford) algebras, thus write $\mH_n:=H_n(0),\, \mH\mC_n:=HCl_n(0)$ for short.

First recall the main result about representation theory of 0-Hecke-Clifford algebras discussed in \cite[\S 5]{BHT}. Note that the 0-Hecke algebra $\mH_n$ of type A is a basic algebra whose simple modules are one-dimensional, and a complete set of non-isomorphic simple modules are indexed by the compositions of $n$. In fact, given $\al\vDash n$, the simple module $S_\al:=\bk \eta_\al$ is defined as follows.
\beq\label{hec}T_i.\eta_\alpha=\begin{cases}
-\eta_\alpha,& i\in D(\alpha),\\
0,&\mbox{otherwise}.
\end{cases}\eeq
We denote by $P_\al$ the projective cover of $S_\al$. It has a linear basis $\{u_w:w\in\fd_\al\}$ with the module structure given as follows \cite[\S 5.3]{KT}:
\beq\label{act}
T_i.u_w=\begin{cases}
-u_w, &i\in D(w^{-1}),\\
u_{s_iw}, &i\notin D(w^{-1}), s_iw\in\fd_\al,\\
0,&\mbox{otherwise}.
\end{cases}
\eeq
Moreover, for the tower $\mH:=\bigoplus_{n\geq0}\mH_n$, Krob and Thibon defined the following Frobenius isomorphism and its adjoint in \cite{KT}:
\beq\label{fro}\mbox{Ch}:\mathcal{G}(\mH)\rightarrow\mbox{QSym},\, [S_\alpha]\mapsto F_\alpha,\,
\mbox{Ch}^*:\mbox{NSym}\rightarrow\mathcal{K}(\mH),\,R_\alpha\mapsto[P_\alpha].\eeq
Both are graded Hopf algebra isomorphisms satisfying
\[\langle F,\mbox{Ch}([M])\rangle=\langle\mbox{Ch}^*(F),[M]\rangle,\,
F\in\mbox{NSym},[M]\in\mathcal{G}(\mH).\]
Due to Gessel, we know that the Cartan map $\chi$ of $\mH$ satisfies the following commutative diagram \cite[Prop. 5.9]{KT}:
\[\xymatrix@=2em{
\mK(\mH)\ar@{->}[r]^-\chi&\mG(\mH)
\ar@{->}[d]^-{\mb{\scriptsize Ch}}\\
\mb{NSym}\ar@{->}[r]^-\pi\ar@{->}[u]^-{\mb{\scriptsize Ch}^*}
&\mb{QSym}}\]
That is, $\chi([P_\al])=\sum_\be\lan R_\be,r_\al\ran[S_\be]$, where $r_\al:=\pi(R_\al)\in\La$ is the \textit{ribbon Schur function} of shape $\al$.

\subsection{Irreducible supermodules of 0-Hecke-Clifford algebras}
A complete set of non-isomorphic simple supermodules of $\mH\mC_n$ are parameterized by peak sets and thus denoted by $\{HClS_P\}$. They can be obtained from the induced modules
\[\tilde{S}_\alpha:=\mbox{Ind}_{\mH_n}^{\mH\mC_n}S_\alpha,\,\al\vDash n.\]
Slightly modifying \cite[Theorem 5.4]{BHT}, we have
\begin{theorem}\label{end}
Let $\al\vDash n$, $V:=V(\al)\subseteq[n]$ and $Cl_V$ be the subalgebra of $Cl_n$ generated by $\{c_v\}_{v\in V}$. For any homogeneous $c\in Cl_V$, define $f_c\in\mb{End}_\bk(\tilde{S}_\al)$ by
\[f_c(c_D\eta_\al)=(-1)^{|c||D|}c_Dc\eta_\al,\,D\subseteq[n].\]
If we linearly extend it to all $c\in Cl_V$, then the map $c\mapsto f_c$ is an even isomorphism from $Cl_V$ to $\mb{End}_{\mH\mC_n}(\tilde{S}_\al)$.
\end{theorem}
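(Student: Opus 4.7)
The plan is to build the map $c\mapsto f_c$ explicitly, show its image lies in $\mb{End}_{\mH\mC_n}(\tilde{S}_\al)$, and then establish injectivity and surjectivity separately.

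I would first pin down the structure of $\tilde{S}_\al$. Using the cross-relations to push each $T_i$ to the right of all $c_j$'s one sees that $\mH\mC_n$ has the PBW basis $\{c_DT_w:D\subseteq[n],\,w\in\fs_n\}$, and consequently $\tilde{S}_\al=\mH\mC_n\ot_{\mH_n}\bk\eta_\al$ has $\bk$-basis $\{c_D\eta_\al:D\subseteq[n]\}$, with the $c_j$-action being Clifford left multiplication and the $T_i$-action read off by expanding $T_ic_D$ back into PBW form and applying (\ref{hec}). In particular the map $Cl_n\rw\tilde{S}_\al$, $c\mapsto c\eta_\al$, is a $\bk$-linear isomorphism.

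Next I would verify that $c\mapsto f_c$ is a well-defined even injective superalgebra homomorphism into $\mb{End}_\bk(\tilde{S}_\al)$ whose image consists of $\mH\mC_n$-endomorphisms. The sign $(-1)^{|c||D|}$ is exactly the Koszul sign required for $f_c$ to super-commute with every left $c_j$-action, and multiplicativity $f_{cc'}=f_cf_{c'}$ is then a direct computation. The substantive check is $T_i$-equivariance: by multiplicativity it suffices to do it for $c=c_v$ with $v\in V(\al)$, and super-$c_j$-equivariance further reduces it to the single identity $T_ic_v\eta_\al=c_vT_i\eta_\al$ in $\tilde{S}_\al$, for every $i$. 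For $i\notin\{v-1,v\}$ this is the ordinary commuting cross-relation; for $i=v$ one uses $T_vc_v=c_{v+1}T_v$ together with $v\notin D(\al)$ (so $T_v\eta_\al=0$); for $i=v-1$ one uses $T_{v-1}c_v=c_{v-1}T_{v-1}+c_{v-1}-c_v$ together with $v-1\in D(\al)$ (so $T_{v-1}\eta_\al=-\eta_\al$ cancels the $c_{v-1}$ correction term). The hypothesis $v\in V(\al)$, endpoints included, is exactly the conjunction of the two conditions that make all boundary terms vanish. Injectivity is then automatic: $f_c(\eta_\al)=c\eta_\al$, and $c\eta_\al=0$ forces $c=0$ in $Cl_n$, hence in $Cl_V$.

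For surjectivity, any $f\in\mb{End}_{\mH\mC_n}(\tilde{S}_\al)$ is determined by $y:=f(\eta_\al)$ via $c$-equivariance, so one must characterize the admissible elements $y\in\tilde{S}_\al$. Writing $y=\sum_Da_Dc_D\eta_\al$, the $\mH_n$-equivariance is equivalent to $T_iy=-y$ for $i\in D(\al)$ and $T_iy=0$ for $i\notin D(\al)$. I would analyze this homogeneous linear system by induction on $|D|$: the single-element case already shows $a_{\{v\}}\neq 0$ forces $v\in V(\al)$, and the Clifford relations together with the cross-relations promote this to $a_D=0$ whenever $D\not\subseteq V(\al)$, giving $y\in Cl_V\cdot\eta_\al$ and hence $f=f_y$. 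The main obstacle is precisely this inductive step: ruling out unexpected cancellations in which several $a_D$'s with $D\not\subseteq V(\al)$ conspire to satisfy every $T_i$-constraint simultaneously. An alternative would be to compute $\dim\mb{End}_{\mH\mC_n}(\tilde{S}_\al)=2^{|V(\al)|}$ independently by decomposing $\tilde{S}_\al$ into projective indecomposables, but that piggybacks on structural results which in this paper follow from the present theorem.
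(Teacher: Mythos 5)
The paper does not give a proof of this theorem; it simply states it as a slight modification of \cite[Theorem 5.4]{BHT}, so there is no internal proof to compare against. Taken on its own, your proposal is sound in structure, and the forward half is essentially complete. Two minor points there. First, $c\mapsto f_c$ is not literally multiplicative: a direct computation gives $f_{c'}\circ f_c=(-1)^{|c||c'|}f_{cc'}$, a super-anti-homomorphism rather than a homomorphism. This does not damage your reduction (an anti-homomorphism still lets you reduce the $\mH\mC_n$-equivariance check to the generators $c_v$), but the word ``multiplicativity'' is not quite right. Second, the reduction of the $T_i$-equivariance check to the single identity $T_ic_v\eta_\al=c_vT_i\eta_\al$ does work, but it requires the observation that in each of the four branches of (\ref{com}) the element $T_ic_D\eta_\al$ decomposes as a combination of $c_E\eta_\al$ with $|E|\equiv|D|\bmod 2$ together with $c_D T_i\eta_\al$; with this in hand, super-$c_j$-equivariance plus the base case $D=\emptyset$ propagate the commutation to all $D$. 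Your three-way case analysis for $i\notin\{v-1,v\}$, $i=v$, $i=v-1$ is correct, including the boundary treatment of $v\in\{1,n\}$.

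The obstacle you flag in the surjectivity step is, in fact, not there. Write $y=f(\eta_\al)=\sum_E a_E c_E\eta_\al$ and expand the equation $T_i y=\lambda_i y$ (with $\lambda_i=-1$ for $i\in D(\al)$ and $0$ otherwise) coefficient by coefficient using (\ref{com}). All the resulting constraints are diagonal, i.e.\ of the simple form $a_E=0$; there are no cross-coefficient equations capable of producing the unexpected cancellations you were worried about. Concretely: for $E\supseteq\{i,i+1\}$ one gets $(2\lambda_i+1)a_E=0$, hence $a_E=0$; for $i\in E$, $i+1\notin E$ one gets $(\lambda_i+1)a_{(E\setminus\{i\})\cup\{i+1\}}=\lambda_i a_E$, which for $i\in D(\al)$ says $a_E=0$; for $i\notin E$, $i+1\in E$ one gets $\lambda_i a_{(E\setminus\{i+1\})\cup\{i\}}=(\lambda_i+1)a_E$, which for $i\notin D(\al)$ says $a_E=0$. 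In each case one side is multiplied by zero, so only a single coefficient is constrained. Combining these over all $i$ forces $a_E=0$ precisely when $E$ contains some $v\notin V(\al)$, so $f(\eta_\al)\in Cl_V\eta_\al$, and then $f=f_c$ because $\eta_\al$ generates $\tilde{S}_\al$. Thus your plan goes through without any inductive subtlety, and your remark that the ``dimension count'' alternative would be circular here is correct.
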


By \cite[Th. 5.5, Cor. 5.6]{BHT} we know that $\tilde{S}_\al\cong\tilde{S}_\be$ (even isomorphism) if and only if $P(\al)=P(\be)$ and
\beq\label{dec}\tilde{S}_\al\cong {HClS_{P(\al)}}^{\oplus 2^{l_\al}},\,l_\al:=\left\lfloor\tfrac{|P(\al)|+1}{2}\right\rfloor.\eeq
In fact, we know that for $V(\al)=\{n_1,\dots,n_s\}$, if define the mutually orthogonal even idempotents
\beq\label{ide}e^\ve_\al:=\dfrac{1}{2^{l_\al}}(1+\ve_1\sqrt{-1}c_{n_1}c_{n_2})
(1+\ve_2\sqrt{-1}c_{n_3}c_{n_4})\cdots(1+\ve_{l_\al}\sqrt{-1}c_{n_{2l_\al-1}}
c_{n_{2l_\al}}),\eeq
where $\ve=(\ve_1,\dots,\ve_{l_\al})\in\{\pm1\}^{\times{l_\al}}$, then $Cl_n e^\ve_\al\eta_\al\,(\ve\in\{\pm1\}^{\times l_\al})$ provide all the simple components of $\tilde{S}_\al$.

\begin{prop}\label{type}
For any peak set $P$, the irreducible supermodule $HClS_P$ is of type M when $|P|$ is odd, and of type Q when $|P|$ is even.
\end{prop}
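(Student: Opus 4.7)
The plan is to pin down the type of $HClS_P$ by computing $\mb{dim}_\bk\mb{End}_{\mH\mC_n}(\tilde{S}_\al)$ in two different ways, for any fixed composition $\al\vDash n$ with $P(\al)=P$.

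On the one hand, Theorem \ref{end} identifies $\mb{End}_{\mH\mC_n}(\tilde{S}_\al)$ with the Clifford subalgebra $Cl_V$ where $V=V(\al)$. Since $|V|=|P|+1$ (noted in $\S 2.1$), this forces
\[\mb{dim}_\bk\mb{End}_{\mH\mC_n}(\tilde{S}_\al)=2^{|P|+1}.\]
On the other hand, the isotypic decomposition \eqref{dec} gives $\tilde{S}_\al\cong HClS_P^{\oplus 2^{l_\al}}$ with $l_\al=\lfloor(|P|+1)/2\rfloor$. By Schur's lemma (Lemma \ref{Schur}), $\mb{End}_{\mH\mC_n}(HClS_P)$ has $\bk$-dimension $1$ in the type M case and $2$ in the type Q case; a standard block-matrix argument for an isotypic component then yields $\mb{End}_{\mH\mC_n}(\tilde{S}_\al)\cong M_{2^{l_\al}}\lb\mb{End}_{\mH\mC_n}(HClS_P)\rb$, whose total $\bk$-dimension equals $2^{2l_\al}$ if $HClS_P$ is of type M and $2^{2l_\al+1}$ if of type Q.

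Matching the two expressions finishes the proof. When $|P|$ is odd, $2l_\al=|P|+1$, so only the type M value $2^{2l_\al}$ agrees with $2^{|P|+1}$; when $|P|$ is even, $2l_\al+1=|P|+1$, so only the type Q value $2^{2l_\al+1}$ agrees. The main point to handle carefully is the super-dimension bookkeeping: $\mb{End}_{\mH\mC_n}$ counts both even and odd morphisms, and it is precisely the extra odd generator $J_{HClS_P}$ in the type Q case that produces the factor of $2$ flipping the parity of the exponent — which is exactly what the dichotomy in the proposition records.
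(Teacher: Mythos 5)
Your dimension-count argument is correct, and it is a genuinely different (and arguably cleaner) route from the paper's own proof. The paper also starts from Theorem~\ref{end} and the fact $|V(\al)|=|P(\al)|+1$, but it then reduces the question, via Schur's lemma, to \emph{whether an odd automorphism of $\tilde{S}_\al$ stabilizing every simple component exists}, and settles that by an explicit computation with the idempotents $e^\ve_\al$ of \eqref{ide}: the Clifford generators that commute appropriately with all the $e^\ve_\al$ are precisely those $c_E$ with $|E\cap\{n_{2j-1},n_{2j}\}|$ even for every $j\le l_\al$, and such an odd $c_E$ exists exactly when $|V|$ is odd. Your proof sidesteps the idempotent bookkeeping entirely by equating $\dim_\bk Cl_V = 2^{|P|+1}$ with $2^{2l_\al}\cdot\dim_\bk\mb{End}_{\mH\mC_n}(HClS_P)$ from the isotypic decomposition \eqref{dec}, and the parity dichotomy falls out of $2l_\al$ versus $2l_\al+1$. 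The only point worth stating a bit more carefully is that the block-matrix identification $\mb{End}_{\mH\mC_n}(\tilde{S}_\al)\cong M_{2^{l_\al}}\lb\mb{End}_{\mH\mC_n}(HClS_P)\rb$ is being used purely for its dimension $2^{2l_\al}\cdot\dim\mb{End}(HClS_P)$; this count is insensitive to whether some summands appear as $\Pi HClS_P$ rather than $HClS_P$, since $\mb{Hom}(N,\Pi N)\cong\Pi\,\mb{Hom}(N,N)$ has the same $\bk$-dimension — so no parity-shift ambiguity in \eqref{dec} can disturb the argument. What the paper's route buys in exchange for the extra computation is an explicit description of the odd isomorphism (namely $f_{c_{n_s}}$ in the $|V|$-odd case), which your counting argument establishes only existentially.
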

\begin{proof}
By Theorem \ref{end}, we know that $\mb{End}_{\mH\mC_n}(\tilde{S}_\al)\cong Cl_V$ for any $\al\vDash n$ and $V:=V(\al)\subseteq[n]$. Since $|V(\al)|=|P(\al)|+1$, by Schur's Lemma we only need to prove that there exists an odd automorphism of $\tilde{S}_\al$ stabilizing any simple component of it if and only if $|V|$ is odd.

First for any even idempotent $e^\ve_\al$ in (\ref{ide}), it is easy to check that
\[e^\ve_\al c_i=\begin{cases}
c_i e^\ve_\al,& i\notin\{n_1,\dots,n_{2l_\al}\},\\
c_i e^{\ve^{(i)}}_\al,& i\in\{n_1,\dots,n_{2l_\al}\},
\end{cases}\]
where $V=\{n_1,\dots,n_s\}$ and $\ve^{(i)}$ is obtained from $\ve$ by only changing the sign $\ve_j$ if $i\in\{n_{2j-1},n_{2j}\}\subseteq V$. It means that the map $f_{c_E}\,(E\subseteq V)$ defined in Theorem \ref{end} stabilizes any simple component of  $\tilde{S}_\al$ if and only if $|E\cap\{n_{2j-1},n_{2j}\}|$ is even for all $j=1,\dots,l_\al$.
Hence, if we need $f_{c_E}$ to be odd and satisfies the stability condition, then $V$ must be odd too.
\end{proof}

Denote
\[\mathcal{G}:=\mathcal{G}(\mH),\,\mathcal{K}:=\mathcal{K}(\mH),
\,\widetilde{\mathcal{G}}:=\mathcal{G}(\mH\mC),
\,\widetilde{\mathcal{K}}:=\mathcal{K}(\mH\mC).\]
In \cite{BHT} Bergeron et al. nicely define the following Frobenius isomorphism for the tower of 0-Hecke-Clifford algebras:
\[\widetilde{\mbox{Ch}}:\widetilde{\mathcal{G}}\rightarrow\mbox{Peak}^*,\,[\tilde{S}_\al]\mapsto K_{P(\alpha)},\]
which satisfies the following commutative diagrams (as a categorification of the descent-to-peak map):
\beq\label{dp}\xymatrix@=2em{
\mathcal{G}\ar@{->}[r]^-{\mbox{\scriptsize Ind}_{\mH}^{\mH\mC}}
\ar@{->}[d]^-{\mbox{\scriptsize Ch}}&\widetilde{\mathcal{G}}
\ar@{->}[d]^-{\widetilde{\mbox{\scriptsize Ch}}}\\
\mbox{QSym}\ar@{->}[r]^-{\vartheta}
&\mbox{Peak}^*},\quad
\xymatrix@=2em{[S_\alpha]\ar@{|->}[r]
\ar@{|->}[d]
&[\tilde{S}_\alpha]\ar@{|->}[d]\\
F_\alpha\ar@{|->}[r]&K_{P(\alpha)}}\eeq
\beq\label{emb}\xymatrix@=2em{
\widetilde{\mathcal{G}}\ar@{->}[r]^-{\mbox{\scriptsize Res}_{\mH}^{\mH\mC}}
\ar@{->}[d]^-{\widetilde{\mbox{\scriptsize Ch}}}&\mathcal{G}
\ar@{->}[d]^-{\mbox{\scriptsize Ch}}\\
\mbox{Peak}^*\ar@{->}[r]
&\mbox{QSym}},\quad
\xymatrix@=2em{[\tilde{S}_\alpha]\ar@{|->}[r]
\ar@{|->}[d]
&[\mbox{Res}_{\mH}^{\mH\mC}\tilde{S}_\alpha]\ar@{|->}[d]\\
K_{P(\alpha)}\ar@{|->}[r]&
2^{|P(\alpha)|+1}\sum\limits_{\beta\vDash n\atop P(\alpha)\subseteq D(\beta)\triangle(D(\beta)+1)}F_\beta}\eeq
where $\mbox{Ind}_{\mH}^{\mH\mC}:=\bigoplus_{n\geq0}\mbox{Ind}_{\mH_n}^{\mH\mC_n}$
and $\mbox{Res}_{\mH}^{\mH\mC}:=\bigoplus_{n\geq0}\mbox{Res}_{\mH_n}^{\mH\mC_n}$.

\subsection{Projective supermodules of 0-Hecke-Clifford algebras}
In this subsection we consider the category of finitely generated projective supermodules of 0-Hecke-Clifford algebras, which is lack of  discussion in \cite{BHT}. So far it is not so easy to construct the indecomposable one directly, we similarly consider the induction from
the projective modules of 0-Hecke algebras. Define
\[\tilde{P}_\alpha:=\mbox{Ind}_{\mH_n}^{\mH\mC_n}P_\alpha,\,\al\vDash n,\]
which becomes a projective $\mH\mC_n$-supermodule.


Since $\mH\mC_n$ is a free right $\mH_n$-module of rank $2^n$,
we can write a basis for $\tilde{P}_\al$ as $\{c_Du_w:D\subseteq[n],w\in\fd_\al\}$ for short.
Via the defining relation of $\mH\mC_n$, we get the following commutation relations:
\beq\label{com}T_ic_D=\begin{cases}
c_DT_i, &i,i+1\notin D,\\
c_{(D\backslash\{i\})\cup\{i+1\}}T_i, &i\in D,\,i+1\notin D,\\
c_{(D\backslash\{i+1\})\cup\{i\}}(T_i+1)-c_D, &i\notin D,\,i+1\in D,\\
-c_D(T_i+1)+c_{D\backslash\{i,i+1\}},&i,i+1\in D,
\end{cases}\eeq
for any $D\subseteq[n],\,i=1,\dots,n-1$.
Combining these relations and the module action (\ref{act}) of $P_\al$, one can describe explicitly the module structure of  $\tilde{P}_\al$ for any $\al\vDash n$.

For example, the module structure of $\tilde{P}_{12}$ can
be depicted explicitly by the following graphs (separated into two $\bZ_2$-graded components):
\[\xy 0;/r.25pc/:
{\ar_{T_1}(-2,22)*{};(-14,14)*{}};
{\ar^{T_2}(2,22)*{};(14,14)*{}};
{\ar^{T_2}(0,22)*{};(0,14)*{}};
{(0,10)*{};(-22,2)*{}**\crv{}?(1)*\dir{>}+(17,4)*{\scriptstyle -T_1}};
{\ar_{T_2}(-18,10)*{};(-24,2)*{}};
{(-16,10)*{};(-8,2)*{}**\crv{}?(1)*\dir{>}+(-8,6)*{\scriptstyle T_2}};
{\ar^{T_1}(16,10)*{};(24,2)*{}};
{\ar^{T_2}(14,10)*{};(10,2)*{}};
{\ar^{T_2}(2,10)*{};(8,2)*{}};
{(-13,13)*{};(-13,11)*{}**\crv{(-9,16) & (-9,8)}?(1)*\dir{>}+(5,1)*{-T_1\atop-T_2}};
{(19,13)*{};(19,11)*{}**\crv{(23,16) & (23,8)}?(1)*\dir{>}+(5.5,1)*{\scriptstyle-T_1}};
{(3,25)*{};(3,23)*{}**\crv{(7,28) & (7,20)}?(1)*\dir{>}+(5.5,1)*{\scriptstyle-T_2}};
{(27,1)*{};(27,-1)*{}**\crv{(31,4) & (31,-4)}?(1)*\dir{>}+(5.5,1)*{\scriptstyle-T_2}};
{(11,1)*{};(11,-1)*{}**\crv{(15,4) & (15,-4)}?(1)*\dir{>}+(5.5,1)*{\scriptstyle-T_2}};
{(-5,1)*{};(-5,-1)*{}**\crv{(-1,4) & (-1,-4)}?(1)*\dir{>}+(5.5,1)*{\scriptstyle-T_1}};
{(-21,1)*{};(-21,-1)*{}**\crv{(-17,4) & (-17,-4)}?(1)*\dir{>}+(5.5,1)*{\scriptstyle-T_1}};
{(-6,-2)*{};(6,-2)*{}**\crv{(0,-6)}
?(1)*\dir{>}+(-5,-4)*{\scriptstyle T_1}};
{(-8,-2)*{};(22,-2)*{}**\crv{(-6,-10)&(20,-10)}
?(1)*\dir{>}+(-17,-8)*{\scriptstyle -T_2}};
{(-26,-2)*{};(26,-2)*{}**\crv{(0,-22)}
?(1)*\dir{>}+(-24,-12)*{\scriptstyle T_2}};
(0,24)*{\dot{2}\dot{1}\dot{3}};(-16,12)*{21\dot{3}};(0,12)*{\dot{2}\dot{1}3};
(16,12)*{\dot{3}\dot{1}\dot{2}};(-24,0)*{2\dot{1}3};
(-8,0)*{3\dot{1}2};(8,0)*{\dot{3}12};(24,0)*{31\dot{2}};
\endxy\quad
\xy 0;/r.25pc/:
{\ar_{-T_1}(-2,22)*{};(-14,14)*{}};
{\ar^{T_2}(2,22)*{};(14,14)*{}};
{\ar^{T_2}(0,22)*{};(0,14)*{}};
{(0,10)*{};(-22,2)*{}**\crv{}?(1)*\dir{>}+(17,4)*{\scriptstyle T_1}};
{\ar_{T_2}(-18,10)*{};(-24,2)*{}};
{(-16,10)*{};(-8,2)*{}**\crv{}?(1)*\dir{>}+(-8,5)*{\scriptstyle -T_2}};
{\ar^{T_1}(16,10)*{};(24,2)*{}};
{\ar^{-T_2}(14,10)*{};(10,2)*{}};
{\ar^{T_2}(2,10)*{};(8,2)*{}};
{(-13,13)*{};(-13,11)*{}**\crv{(-9,16) & (-9,8)}?(1)*\dir{>}+(5,1)*{-T_1\atop-T_2}};
{(19,13)*{};(19,11)*{}**\crv{(23,16) & (23,8)}?(1)*\dir{>}+(5.5,1)*{\scriptstyle-T_1}};
{(3,25)*{};(3,23)*{}**\crv{(7,28) & (7,20)}?(1)*\dir{>}+(5.5,1)*{\scriptstyle-T_2}};
{(27,1)*{};(27,-1)*{}**\crv{(31,4) & (31,-4)}?(1)*\dir{>}+(5.5,1)*{\scriptstyle-T_2}};
{(11,1)*{};(11,-1)*{}**\crv{(15,4) & (15,-4)}?(1)*\dir{>}+(5.5,1)*{\scriptstyle-T_2}};
{(-5,1)*{};(-5,-1)*{}**\crv{(-1,4) & (-1,-4)}?(1)*\dir{>}+(5.5,1)*{\scriptstyle-T_1}};
{(-21,1)*{};(-21,-1)*{}**\crv{(-17,4) & (-17,-4)}?(1)*\dir{>}+(5.5,1)*{\scriptstyle-T_1}};
{(-6,-2)*{};(6,-2)*{}**\crv{(0,-6)}
?(1)*\dir{>}+(-5,-4)*{\scriptstyle T_1}};
{(-8,-2)*{};(22,-2)*{}**\crv{(-6,-10)&(20,-10)}
?(1)*\dir{>}+(-17,-8)*{\scriptstyle T_2}};
{(-26,-2)*{};(26,-2)*{}**\crv{(0,-22)}
?(1)*\dir{>}+(-24,-12)*{\scriptstyle T_2}};
(0,24)*{\dot{2}1\dot{3}};(-16,12)*{2\dot{1}\dot{3}};(0,12)*{\dot{2}\dot{1}3};
(16,12)*{\dot{3}\dot{1}2};(-24,0)*{213};
(-8,0)*{3\dot{1}\dot{2}};(8,0)*{\dot{3}1\dot{2}};(24,0)*{312};
\endxy
\]
where we use $w\in\fd_\al$ to represent $u_w$ and put dots on the heads of those numbers to represent basis elements $c_Du_w$, e.g. we abbreviate $c_{\{1,3\}}u_{213}$ as $\dot{2}1\dot{3} $.

By relation (\ref{com}), one easily gets the following result.
\begin{lem}\label{ord}
For any $w\in\fs_n,D\subseteq[n]$,
\[T_wc_D=(-1)^{l_{w,D}}c_{w(D)}T_w+\sum_{E\subseteq D\atop v<w}a_{E,v}c_ET_v\]
for some integers $a_{E,v}$, where $l_{w,D}:=|\{i,j\in D:i<j,w(i)>w(j)\}|$, $w(D):=\{w(i):i\in D\}$ and $<$ stands for the Bruhat order of $\fs_n$.
\end{lem}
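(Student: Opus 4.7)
The natural approach is induction on the length $\ell(w)$ of $w$ in $\fs_n$. The base case $w=e$ is trivial: $T_e=1$, $e(D)=D$, $l_{e,D}=0$, and the sum over $v<e$ is empty. For $\ell(w)=1$, i.e.\ $w=s_i$, one simply verifies the claim by inspecting the four cases of the commutation relation (\ref{com}): in the two "non-inversion" cases ($i,i+1\notin D$ and the two "single" cases) one has $l_{s_i,D}=0$, while in the case $i,i+1\in D$ one has $l_{s_i,D}=1$ and the sign $-1$ in $-c_D(T_i+1)+c_{D\setminus\{i,i+1\}}$ accounts for $(-1)^{l_{s_i,D}}$; in each case the remaining terms have the required form with $v=e<s_i$.

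For the inductive step, fix a reduced factorization $w=s_iw'$ with $\ell(w)=\ell(w')+1$, so $T_w=T_iT_{w'}$ and $w'<w$ in Bruhat order. Applying the inductive hypothesis to $T_{w'}c_D$ yields
\[T_wc_D=(-1)^{l_{w',D}}\,T_i\,c_{w'(D)}\,T_{w'}
+\sum_{E,\,v<w'}a_{E,v}\,(T_ic_E)\,T_v.\]
Now commute each $T_i$ past the Clifford element using (\ref{com}); the main contribution comes from the first summand, giving $(-1)^{l_{w',D}}(-1)^{l_{s_i,w'(D)}}c_{s_iw'(D)}T_iT_{w'}=(-1)^{l_{w',D}}(-1)^{l_{s_i,w'(D)}}c_{w(D)}T_w$, plus a correction with $T_{w'}$ in place of $T_iT_{w'}$. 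The crucial combinatorial identity to verify here is
\[l_{w,D}\equiv l_{w',D}+l_{s_i,w'(D)}\pmod 2,\]
which follows by tracking the unique (at most one) pair of indices $j<k$ in $D$ with $\{w'(j),w'(k)\}=\{i,i+1\}$: multiplying by $s_i$ on the left flips the inversion status of exactly this pair. Combined with this identity, the main term rearranges to the desired $(-1)^{l_{w,D}}c_{w(D)}T_w$.

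It remains to control the remainder. Each commutation via (\ref{com}) produces terms $c_{E'}$ of cardinality at most $|E|$ and multiplied by a new factor of $T_i$ or by $1$; these then recombine with $T_v$ (for some $v<w'$) as $T_iT_v$, and by the 0-Hecke relation this is either $T_{s_iv}$ (when $\ell(s_iv)=\ell(v)+1$) or $-T_v$ (when $\ell(s_iv)<\ell(v)$). In both cases the resulting index is $<w$ in Bruhat order, using the standard lift property: $v<w'$ together with $s_iw'>w'$ implies both $v<w$ and $s_iv\leq w'\cdot$ or $s_iv\leq w$, so in particular $s_iv<w$. The main technical obstacle is this Bruhat-order bookkeeping and, simultaneously, keeping the signs straight, because case~3 of (\ref{com}) introduces the extra constant $-c_D$ whose sign must be absorbed into the $a_{E,v}$ coefficients; once the parity identity above is in hand, everything else is routine.
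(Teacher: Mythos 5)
Your inductive argument is the natural one, and its two key ingredients are essentially correct. The parity identity $l_{w,D}\equiv l_{w',D}+l_{s_i,w'(D)}\pmod 2$ holds exactly as you argue: passing from $w'$ to $w=s_iw'$ flips the inversion status of at most one pair of positions in $D$ (the pair, if any, sent by $w'$ onto $\{i,i+1\}$), and $l_{s_i,w'(D)}\in\{0,1\}$ detects precisely whether such a pair exists. The Bruhat bookkeeping is also right in substance, though your phrase ``$s_iv\leq w'\cdot$ or $s_iv\leq w$'' is garbled; the clean version is that $v<w'<w$ gives $v<w$, and for $s_iv$: if $s_iv<v$ then $s_iv<v<w$ trivially, while if $s_iv>v$ then the lifting property (applied with $s_iw=w'<w$ and $s_iv>v$) gives $s_iv\leq w$, and equality would force $v=s_iw=w'$, contradicting $v<w'$.

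There is, however, a point you should have flagged: your remainder analysis delivers only the cardinality bound $|E|\leq|D|$, not the containment $E\subseteq D$ appearing in the statement, and this is unavoidable because that containment is already false at length one. Case three of (\ref{com}) reads $T_ic_{i+1}=c_iT_i+c_i-c_{i+1}$, so the remainder for $D=\{i+1\}$ contains $c_{\{i\}}$ with $\{i\}\not\subseteq\{i+1\}$. The constraint in Lemma~\ref{ord} should therefore read $|E|\leq|D|$ (or simply omit the condition on $E$). This is a defect of the statement rather than of your proof, and it is harmless: every invocation of Lemma~\ref{ord} in the paper --- the invariance of $\mB_{\preceq x}$ under right $Cl_n$-multiplication, the trace computation in Proposition~\ref{frob}, and the filtration $\{\tilde P_\alpha^w\}$ of $\tilde P_\alpha$ --- uses only the lower-triangularity in $v$ (that $v<w$ in Bruhat order, hence $\ell(v)<\ell(w)$), never the condition on $E$. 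Once the statement is amended accordingly, your proof is correct.
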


\subsection{Dual Hopf algebras arising from 0-Hecke-Clifford algebras}
In this subsection, we check a series of axioms for the tower of 0-Hecke-Clifford algebras in order to show that
$(\widetilde{\mK},\,\widetilde{\mG})$ forms a dual pair of graded Hopf algebras. One of the key steps is to define a proper Hopf
pairing.

First the Mackey property of 0-Hecke-Clifford algebras is easy to proved by mimicing the nice approach in \cite[\S 2-h.]{BK1} for affine Hecke-Clifford algebras. That guarantees both of the Grothendieck groups $\widetilde{\mK}$ and $\widetilde{\mG}$ to be Hopf algebras via the induction and the restriction. One can also refer to \cite[Theorem 2.7]{DJ}, \cite[Prop. 4.3]{SY} for the case of Hecke algebras.

For the sake of completeness, we sketch the proof steps as follows.
For any $\al=(\al_1,\dots,\al_r)\vDash n$, define $\mH\mC_\al:=\mH\mC_{\al_1}\ot\cdots\ot \mH\mC_{\al_r}$, embedding as a parabolic subalgebra of $\mH\mC_n$. Similarly define the subalgebra $\mH_\al$ of $\mH_n$.

Given $\al,\be\vDash n$, let $\fs_\al:=\lan s_i:i\notin D(\al)\ran$ be the Young subgroup of $\fs_n$, $\fr_\al$ denote the set of minimal length left $\fs_\al$-coset representatives in $\fs_n$, and $\fr_\al^{-1}$ for the one corresponding to right $\fs_\al$-coset. Then $\fr_{\al,\be}:=\fr^{-1}_\al\cap\fr_\be$ is the
set of minimal length $(\fs_\al,\fs_\be)$-double coset representatives in $\fs_n$. For any $x\in\fr_{\al,\be}$, $\fs_\al\cap x\fs_\be x^{-1}$ and $x^{-1}\fs_\al x\cap\fs_\be$ are Young subgroups of $\fs_n$, thus we can let $\al\cap x\be$ and $x^{-1}\al\cap \be$ to be the two compositions of $n$ such that
\[\fs_\al\cap x\fs_\be x^{-1}=\fs_{\al\cap x\be},\,
x^{-1}\fs_\al x\cap\fs_\be=\fs_{x^{-1}\al\cap\be}.\]
Now it needs several technical lemmas as follows.
\begin{lem}
For any $x\in\fr_{\al,\be}$, the subspace $\mH\mC_\al T_x\mH_\be$ of $\mH\mC_n$ has basis $\{C_DT_w:D\subseteq[n],w\in\fs_\al x\fs_\be\}$. Moreover,
\[\mH\mC_n=\bigoplus_{x\in\fr_{\al,\be}}\mH\mC_\al T_x\mH_\be.\]
\end{lem}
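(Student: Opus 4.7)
The plan is to bootstrap from the analogous Mackey decomposition of the 0-Hecke algebra $\mH_n$ --- namely that $\mH_\al T_x\mH_\be$ has basis $\{T_w:w\in\fs_\al x\fs_\be\}$ for each $x\in\fr_{\al,\be}$ and $\mH_n=\bigoplus_{x\in\fr_{\al,\be}}\mH_\al T_x\mH_\be$ --- by ``tensoring up'' with the Clifford part. The key structural fact is that $\mH\mC_n$ is free as a right $\mH_n$-module on $\{c_D:D\subseteq[n]\}$, yielding the $\bk$-basis $\{c_DT_w:D\subseteq[n],w\in\fs_n\}$ listed just before the lemma. Moreover $Cl_n\subseteq\mH\mC_\al$ for every composition $\al\vDash n$, since the generators $c_1,\dots,c_n$ are distributed across the tensor factors of $\mH\mC_\al$; hence Clifford generators can be kept on the far left throughout and there is no need to push them past $T_x$.

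For the spanning statement, a generic element of $\mH\mC_\al T_x\mH_\be$ is a $\bk$-linear combination of $aT_xb$ with $a\in\mH\mC_\al$ and $b\in\mH_\be$. Expanding $a$ in the basis $\{c_E T_u:E\subseteq[n],u\in\fs_\al\}$ of $\mH\mC_\al$ and $b$ in the basis $\{T_v:v\in\fs_\be\}$ of $\mH_\be$ reduces the problem to showing $c_E T_u T_x T_v\in\mb{span}_\bk\{c_D T_w:w\in\fs_\al x\fs_\be\}$, which follows from the 0-Hecke Mackey lemma applied to $T_uT_xT_v$. Conversely, each $w\in\fs_\al x\fs_\be$ admits a reduced factorization $w=uxv$ with $u\in\fs_\al$, $v\in\fs_\be$ (so that $T_w=T_uT_xT_v$), whence $c_D T_w=c_D T_u T_x T_v\in\mH\mC_\al T_x\mH_\be$. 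Finally, because $\fs_n=\bigsqcup_{x\in\fr_{\al,\be}}\fs_\al x\fs_\be$ partitions the indexing set of the full basis $\{c_DT_w\}$ of $\mH\mC_n$, linear independence inside each piece and the direct-sum decomposition $\mH\mC_n=\bigoplus_x\mH\mC_\al T_x\mH_\be$ both follow immediately.

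The main obstacle is therefore the underlying 0-Hecke-algebra Mackey lemma, that $T_uT_xT_v$ expands in $\{T_w:w\in\fs_\al x\fs_\be\}$. This rests on the length-additivity property $\ell(uxv)=\ell(u)+\ell(x)+\ell(v)$ whenever $x\in\fr_{\al,\be}$ and $u,v$ are chosen as minimal representatives of cosets of $\fs_\al\cap x\fs_\be x^{-1}$ and $x^{-1}\fs_\al x\cap\fs_\be$, combined with the quadratic relation $T_i^2=-T_i$ to ensure that non-reduced products expand only into elements of the same double coset. This classical statement I would simply cite from \cite[Theorem 2.7]{DJ} and \cite[Prop. 4.3]{SY} already mentioned immediately before the lemma in the paper, rather than grind through it by hand; pleasantly, Lemma \ref{ord} is not needed here because the Clifford generators never have to cross $T_x$.
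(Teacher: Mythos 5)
Your proof is correct. The paper itself gives no argument for this lemma (it only ``sketches the proof steps'' by listing the intermediate lemmas without proof, pointing to the Brundan--Kleshchev approach for the affine Hecke--Clifford case), and your bootstrap is precisely the natural one: the observation that $Cl_n\subseteq\mH\mC_\al$ for every $\al\vDash n$ is the key structural point that lets the Clifford factor split off completely, so the decomposition reduces cleanly to the known $0$-Hecke Mackey decomposition $\mH_n=\bigoplus_{x\in\fr_{\al,\be}}\mH_\al T_x\mH_\be$ together with the fact that $\{c_D\}_{D\subseteq[n]}$ is a free basis of $\mH\mC_n$ over $\mH_n$. Your remark that Lemma~\ref{ord} is unnecessary here (the Clifford generators never cross $T_x$) is a valid simplification; the paper keeps Lemma~\ref{ord} in play because it is needed later for the filtration $\{\mB_{\preceq x}\}$, not for this particular lemma.
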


Fix some total order $\prec$ refining the Bruhat order $<$ on $\fr_{\al,\be}$. For $x\in\fr_{\al,\be}$, let
\[\mB_{\preceq x}:=\bigoplus_{y\in\fr_{\al,\be},y\preceq x}\mH\mC_\al T_y \mH_\be,\,\mB_{\prec x}:=\bigoplus_{y\in\fr_{\al,\be},y\prec x}\mH\mC_\al T_y\mH_\be,\]
and $\mB_x:=\mB_{\preceq x}/\mB_{\prec x}$. By Lemma \ref{ord}, we know that $\mB_{\preceq x}$ (resp. $\mB_{\prec x}$) is invariant under right multiplication by $Cl_n$. Hence, $\{\mB_{\preceq x}\}_{x\in\fr_{\al,\be}}$ is an $(\mH\mC_\al,\mH\mC_\be)$-bimodule filtration of $\mH\mC_n$.

\begin{lem}
For any $x\in\fr_{\al,\be}$, there exists an algebra isomorphism
\[\phi=\phi_x:\mH\mC_{\al\cap x\be}\rw \mH\mC_{x^{-1}\al\cap\be}\]
with $\phi(T_w)=T_{x^{-1}wx}$, $\phi(c_i)=c_{x^{-1}(i)}$ for $w\in\fs_{\al\cap x\be},\,1\geq i\geq n$.
\end{lem}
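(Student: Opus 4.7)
The plan is to check that $\phi$ respects the defining relations of $\mH\mC_{\al\cap x\be}$, and then pass from algebra homomorphism to isomorphism by a dimension count or by a symmetric inverse construction.

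The key combinatorial input is an orientation lemma: since $x$ has minimal length in its double coset, for each simple reflection $s_i\in\fs_{\al\cap x\be}=\fs_\al\cap x\fs_\be x^{-1}$ one has $\ell(s_ix)>\ell(x)$, which is equivalent to $x^{-1}(i)<x^{-1}(i+1)$. Writing $s_j:=x^{-1}s_ix\in\fs_{x^{-1}\al\cap\be}$, this forces $x^{-1}(i)=j$ and $x^{-1}(i+1)=j+1$. Consequently conjugation by $x^{-1}$ yields a group isomorphism $\fs_{\al\cap x\be}\rw\fs_{x^{-1}\al\cap\be}$ that sends simple reflections to simple reflections in an order-preserving manner.

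With this in hand, the verification of the generating relations of $\mH\mC_{\al\cap x\be}$ is routine. The 0-Hecke quadratic and braid relations among the admissible $T_i$'s transport directly, because the group isomorphism above sends simple reflections to simple reflections and preserves whether two of them commute or braid. The Clifford relations $c_k^2=-1$ and $c_kc_l=-c_lc_k$ are preserved because $k\mapsto x^{-1}(k)$ is a bijection of $[n]$. The delicate family is the cross relations: the identity $T_ic_i=c_{i+1}T_i$ maps to $T_jc_j=c_{j+1}T_j$, and $(T_i+1)c_{i+1}=c_i(T_i+1)$ maps to $(T_j+1)c_{j+1}=c_j(T_j+1)$, both of which hold in $\mH\mC_{x^{-1}\al\cap\be}$; meanwhile $T_ic_k=c_kT_i$ for $k\notin\{i,i+1\}$ maps to $T_jc_{x^{-1}(k)}=c_{x^{-1}(k)}T_j$ with $x^{-1}(k)\notin\{j,j+1\}$, as required.

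Hence $\phi$ is a well-defined algebra homomorphism. For bijectivity I would either match dimensions---$\dim\mH\mC_{\al\cap x\be}=2^n|\fs_{\al\cap x\be}|=2^n|\fs_{x^{-1}\al\cap\be}|=\dim\mH\mC_{x^{-1}\al\cap\be}$, since the two Young subgroups are $\fs_n$-conjugate---and check surjectivity on generators, or else construct a two-sided inverse by running the same recipe for $x^{-1}$ in place of $x$. I expect the orientation lemma to be the only real obstacle: without the minimality of $x$, one could have $x^{-1}(i)>x^{-1}(i+1)$, in which case the image of $T_ic_i=c_{i+1}T_i$ would be $T_jc_{j+1}=c_jT_j$, which fails in $\mH\mC_n$ (the actual identity being $T_jc_{j+1}=c_jT_j+c_j-c_{j+1}$).
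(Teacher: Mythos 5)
Your proof is correct, and it gives an explicit argument where the paper gives none: the paper states this lemma only as a step in a sketch of the Mackey theorem for $\mH\mC_n$, deferring the details to the analogous treatment of affine Hecke--Clifford algebras in [BK1, \S 2-h]. Your route---verify the defining relations directly and close with a dimension count or the symmetric inverse $\phi_{x^{-1}}$---is the natural one and is essentially what [BK1] does.

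One point to flag: when you write ``this forces $x^{-1}(i)=j$ and $x^{-1}(i+1)=j+1$,'' you are using not only that $\ell(s_ix)>\ell(x)$ (which gives the orientation $x^{-1}(i)<x^{-1}(i+1)$) but also that $x^{-1}s_ix$ is actually a \emph{simple} reflection of $\fs_{x^{-1}\al\cap\be}$, i.e., that conjugation by the minimal double coset representative carries the simple generators of $\fs_\al\cap x\fs_\be x^{-1}$ to simple generators of $x^{-1}\fs_\al x\cap\fs_\be$. That is a standard fact (it is part of Kilmoyer's theorem on parabolic intersections, and is what underlies the paper's assertion that both intersections are Young subgroups), but it is logically prior to the orientation statement rather than a consequence of it, so it deserves an explicit citation or a one-line argument. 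With that caveat noted, the rest is airtight: your check of the cross-relations is exactly where minimality of $x$ is used, and your final remark---that with the wrong orientation one would need $T_jc_{j+1}=c_jT_j$, which is false in $\mH\mC_n$---correctly identifies why the hypothesis on $x$ cannot be dropped.
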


\begin{lem}
View $\mH\mC_\al$ as an $(\mH\mC_\al,\mH\mC_{\al\cap x\be})$-bimodule and $\mH\mC_\be$ as an $(\mH\mC_{x^{-1}\al\cap\be},\mH\mC_\be)$-bimodule. Then $^\phi\mH\mC_\be$ is an $(\mH\mC_{\al\cap x\be},\mH\mC_\be)$-bimodule and
\[\Phi:\mB_x\rw\mH\mC_\al\ot_{\mH\mC_{\al\cap x\be}}{^\phi\mH\mC_\be},\,
uT_xv+\mB_{\prec x}\mapsto u\ot v,\, u\in\mH\mC_\al,v\in\mH\mC_\be\]
is an isomorphism of $(\mH\mC_\al,\mH\mC_\be)$-bimodule.
\end{lem}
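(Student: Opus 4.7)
The plan is to construct the candidate inverse
\[\Psi:\mH\mC_\al\ot_{\mH\mC_{\al\cap x\be}}{^\phi\mH\mC_\be}\rw\mB_x,\quad u\ot v\mapsto uT_xv+\mB_{\prec x},\]
verify that it is a well-defined bimodule map, and then check that $\Psi$ and $\Phi$ are mutually inverse by comparing $\bk$-bases. The crux is the congruence
\[hT_x\equiv T_x\phi(h)\pmod{\mB_{\prec x}},\qquad h\in\mH\mC_{\al\cap x\be},\]
from which the well-definedness of $\Psi$ and the compatibility with $\Phi$ both follow essentially formally.

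To establish this congruence I would reduce to a set of algebra generators of $\mH\mC_{\al\cap x\be}$ and argue inductively. For $h=T_s$ with $s\in\fs_{\al\cap x\be}$, write $s=xs'x^{-1}$ with $s'\in\fs_{x^{-1}\al\cap\be}$; since $x$ is the minimal length $(\fs_\al,\fs_\be)$-double coset representative, one has $\ell(sx)=\ell(s)+\ell(x)$ and $\ell(xs')=\ell(x)+\ell(s')$, so $T_sT_x=T_{sx}=T_{xs'}=T_xT_{s'}=T_x\phi(T_s)$ with no correction at all. For a Clifford generator $h=c_i$ of $\mH\mC_{\al\cap x\be}$, Lemma \ref{ord} applied to $w=x$, $D=\{x^{-1}(i)\}$ gives $T_xc_{x^{-1}(i)}=c_iT_x+\sum_{v<x}(\cdots)c_ET_v$, and solving for $c_iT_x$ expresses it as $T_x\phi(c_i)$ plus a sum of terms $c_ET_v$ with $v<x$ in the Bruhat order of $\fs_n$. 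Induction on the length of a generator word, together with the fact that $\mB_{\prec x}$ is a two-sided $(\mH\mC_\al,\mH\mC_\be)$-submodule of $\mB_{\preceq x}$, extends the congruence to arbitrary $h$.

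With the congruence in hand, well-definedness of $\Psi$ is immediate: for all $u\in\mH\mC_\al$, $v\in\mH\mC_\be$, $h\in\mH\mC_{\al\cap x\be}$,
\[uhT_xv-uT_x\phi(h)v=u(hT_x-T_x\phi(h))v\in u\,\mB_{\prec x}\,v\subseteq\mB_{\prec x},\]
so $\Psi$ kills the relations $uh\ot v-u\ot\phi(h)v$ defining the balanced tensor product; that $\Psi$ is an $(\mH\mC_\al,\mH\mC_\be)$-bimodule map is clear from the formula. That $\Psi$ and $\Phi$ are mutually inverse then reduces to a basis count: by the first lemma in this Mackey series, $\mB_x$ has basis $\{c_DT_w+\mB_{\prec x}:D\subseteq[n],\,w\in\fs_\al x\fs_\be\}$ of cardinality $2^n|\fs_\al x\fs_\be|$; meanwhile ${^\phi\mH\mC_\be}$ is a free left $\mH\mC_{\al\cap x\be}$-module by transport of structure from the standard parabolic freeness of $\mH\mC_\be$ over $\mH\mC_{x^{-1}\al\cap\be}$, and a coset decomposition gives the balanced tensor the same $\bk$-dimension via the bijection $\fs_\al\times_{\fs_{\al\cap x\be}}\fs_\be\cong\fs_\al x\fs_\be$.

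The main obstacle will be the bookkeeping at the second reduction step: Lemma \ref{ord} records correction terms only by the Bruhat order $v<x$ on $\fs_n$, whereas $\mB_{\prec x}$ is indexed by $y\prec x$ in $\fr_{\al,\be}$. Each $v<x$ factors uniquely as $v=\al_1 y\be_1$ with $\al_1\in\fs_\al$, $\be_1\in\fs_\be$, $y\in\fr_{\al,\be}$, and the minimality of $x$ in its double coset together with the subword characterization of the Bruhat order force $y\prec x$. A second application of Lemma \ref{ord} is then needed to absorb the Clifford factor $c_E$ and any residual generators coming from $\be_1$ back into the left $\mH\mC_\al$-side, so that each summand actually lies in $\mH\mC_\al T_y\mH_\be\subseteq\mB_{\prec x}$ rather than merely in $\mH\mC_\al T_y\mH\mC_\be$.
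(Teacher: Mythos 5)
Your proposal is correct and follows the approach the paper itself defers to (mimicking \cite[\S 2-h]{BK1}): set up the inverse $\Psi$, establish the congruence $hT_x\equiv T_x\phi(h)\pmod{\mB_{\prec x}}$ on the generators $T_s$ and $c_i$, extend it to all of $\mH\mC_{\al\cap x\be}$ using the fact that $\mB_{\prec x}$ is an $(\mH\mC_\al,\mH\mC_\be)$-sub-bimodule, and finish with a dimension count via parabolic freeness. Two small corrections to the final paragraph: it is the minimality of $y$ (the double-coset representative of the correction index $v$), not of $x$, that yields $y\le v<x$ in Bruhat order and hence $y\prec x$; and no ``second application of Lemma~\ref{ord}'' is needed, because once one knows $v\in\fs_\al y\fs_\be$ with $y\prec x$, each correction term $c_ET_v$ is already one of the basis vectors $\{c_DT_w:D\subseteq[n],\,w\in\fs_\al y\fs_\be\}$ of $\mH\mC_\al T_y\mH_\be\subseteq\mB_{\prec x}$ furnished by the preceding lemma, so there is nothing left to absorb.
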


\begin{theorem}[Mackey Theorem]\label{mac}
Let $\al,\be\vDash n$ and $M$ be an $\mH\mC_\be$-module. Then the $\mH\mC_\al$-module $\mb{Res}_{\mH\mC_\al}^{\mH\mC_n}\mb{Ind}_{\mH\mC_\be}^{\mH\mC_n}M$
admits an $\mH\mC_\al$-submodule filtration with subquotients isomorphic to $\mb{Ind}_{\mH\mC_{\al\cap x\be}}^{\mH\mC_\al}
\lb{^\phi\mb{Res}_{\mH\mC_{x^{-1}\al\cap\be}}^{\mH\mC_\be}M}\rb$, one for each $x\in\fr_{\al,\be}$.
\end{theorem}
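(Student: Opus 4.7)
My plan is to deduce the Mackey theorem directly from the three preparatory lemmas by tensoring the bimodule filtration of $\mH\mC_n$ with $M$ on the right over $\mH\mC_\be$. Concretely, fix the total order $\prec$ refining Bruhat order on $\fr_{\al,\be}$ and recall that $\{\mB_{\preceq x}\}_{x\in\fr_{\al,\be}}$ is an $(\mH\mC_\al,\mH\mC_\be)$-bimodule filtration of $\mH\mC_n$ with $\mB_x=\mB_{\preceq x}/\mB_{\prec x}$. Applying the right-exact functor $-\otimes_{\mH\mC_\be}M$ produces a sequence of $\mH\mC_\al$-submodules
\[
0=\mB_{\prec x_0}\otimes_{\mH\mC_\be}M\subseteq\mB_{\preceq x_0}\otimes_{\mH\mC_\be}M\subseteq\cdots\subseteq\mH\mC_n\otimes_{\mH\mC_\be}M=\mb{Ind}_{\mH\mC_\be}^{\mH\mC_n}M,
\]
and I will need to check that the induced maps $\mB_{\prec x}\otimes_{\mH\mC_\be}M\hookrightarrow\mB_{\preceq x}\otimes_{\mH\mC_\be}M$ are injective so that the subquotients really are $\mB_x\otimes_{\mH\mC_\be}M$.

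For this injectivity I would argue that each step of the filtration is a split inclusion of right $\mH\mC_\be$-modules: by the first lemma, $\mH\mC_\al T_y\mH_\be$ (summed over $y\preceq x$) has a basis adapted to right $\mH\mC_\be$-action, and using the commutation relations (\ref{com}) together with Lemma \ref{ord} one sees that the right $Cl_n$-action preserves each $\mB_{\preceq x}$ while permuting basis elements within a level. Equivalently, the isomorphism in the third lemma identifies $\mB_x$ with $\mH\mC_\al\otimes_{\mH\mC_{\al\cap x\be}}{^\phi\mH\mC_\be}$; since $\mH\mC_\al$ is free as a right $\mH\mC_{\al\cap x\be}$-module (a standard fact for parabolic inclusions in the tower, and the only real computation here is to exhibit a basis), ${^\phi\mH\mC_\be}$ is free as a right $\mH\mC_\be$-module, and thus $\mB_x$ is right-$\mH\mC_\be$-free. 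A routine induction on $\prec$ then shows each $\mB_{\preceq x}$ is right-$\mH\mC_\be$-free, so tensoring with $M$ preserves the short exact sequences $0\rw\mB_{\prec x}\rw\mB_{\preceq x}\rw\mB_x\rw0$.

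Having established the filtration, the subquotient identification is a short associativity calculation: applying the third lemma and collapsing one tensor factor,
\[
\mB_x\otimes_{\mH\mC_\be}M\cong\mH\mC_\al\otimes_{\mH\mC_{\al\cap x\be}}{^\phi\mH\mC_\be}\otimes_{\mH\mC_\be}M\cong\mH\mC_\al\otimes_{\mH\mC_{\al\cap x\be}}{^\phi M},
\]
where ${^\phi M}$ denotes the $\mH\mC_{\al\cap x\be}$-module obtained by first restricting $M$ to $\mH\mC_{x^{-1}\al\cap\be}$ and then twisting by $\phi=\phi_x$. The right-hand side is by definition $\mb{Ind}_{\mH\mC_{\al\cap x\be}}^{\mH\mC_\al}\!\left({^\phi\mb{Res}_{\mH\mC_{x^{-1}\al\cap\be}}^{\mH\mC_\be}M}\right)$, which is the claimed subquotient.

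The main obstacle will be the superalgebra bookkeeping in verifying that the filtration is genuinely one of $(\mH\mC_\al,\mH\mC_\be)$-sub-bi-supermodules and that the associativity isomorphism above is a legitimate even $\mH\mC_\al$-supermodule isomorphism: the commutation (\ref{com}) and Lemma \ref{ord} generate signs whenever Clifford generators are moved past each other, and the map in the third lemma must be checked to intertwine both left and right super-actions consistently with the twist $\phi$. Once these sign conventions are handled carefully, the rest of the argument is a formal consequence of the three lemmas and the standard tensor-product manipulations.
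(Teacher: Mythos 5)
Your proposal is correct and is essentially the assembly argument the paper has in mind: the paper only sketches the proof by listing the three preparatory lemmas (explicitly deferring to the template in \cite[\S 2-h]{BK1}), and your proof fills in the standard steps of that template—tensor the bimodule filtration with $M$, verify exactness via freeness of each $\mB_x$ as a right $\mH\mC_\be$-module, and collapse ${^\phi\mH\mC_\be}\ot_{\mH\mC_\be}M\cong{^\phi M}$ to identify the subquotients. The freeness of $\mH\mC_\al$ over the parabolic $\mH\mC_{\al\cap x\be}$ that you flag is indeed routine: the triangularity in Lemma \ref{ord} together with a dimension count shows $\mH\mC_\al=\bigoplus_w T_w\mH\mC_{\al\cap x\be}$ over minimal-length coset representatives $w$, and the sign bookkeeping causes no trouble because all the maps in play are even and the filtration steps are spanned by homogeneous elements $c_DT_w$.
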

\noindent
Especially when $\al,\be$ both have two parts, Theorem \ref{mac} implies that $\widetilde{\mG}$ has graded Hopf algebra structure under induction and restriction. The case for $\widetilde{\mK}$ is similar, since $\mB_x$ is projective as a left $\mH\mC_\al$-module.

In order to prove the Hopf duality between $\widetilde{\mK}$ and $\widetilde{\mG}$ by Prop. \ref{dual}, we continue to show that $\mH\mC_n$ is a Frobenius superalgebra. It is straightforward to check that
\begin{prop}
There exist two even algebra involutions $\varphi,\varphi'$ for the 0-Hecke-Clifford algebra $\mH\mC_n$, defined by
\beq\label{Nak}
\begin{array}{l}
\varphi(T_i)=T_{n-i}+c_{n-i}c_{n+1-i},\,i=1,\dots,n-1,\\
\varphi(c_j)=-c_{n+1-j},\,j=1,\dots,n.\\
\varphi'(T_i)=-(T_{n-i}+1),\,i=1,\dots,n-1,\\
\varphi'(c_j)=-c_{n+1-j},\,j=1,\dots,n.
\end{array}
\eeq
There exist two unsigned even algebra anti-involutions $\psi,\psi'$ of $\mH\mC_n$ defined by
\beq\label{anti1}
\begin{array}{l}
\psi(T_i)=T_i+c_ic_{i+1},\,i=1,\dots,n-1,\\
\psi(c_j)=-c_j,\,j=1,\dots,n.\\
\psi'(T_i)=-(T_i+1),\,i=1,\dots,n-1,\\
\psi'(c_j)=-c_j,\,j=1,\dots,n.
\end{array}
\eeq
\end{prop}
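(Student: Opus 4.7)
The plan is to exploit the universal property of $\mH\mC_n$ as presented by generators and relations. For each of the four maps I will extend by multiplicativity (for $\varphi,\varphi'$) or anti-multiplicativity (for $\psi,\psi'$), and the only thing to check is that the images of the generators satisfy the defining relations of $\mH\mC_n$ --- in the original order for the homomorphisms, or the opposite order for the anti-homomorphisms. Once each map is well-defined as an algebra (anti-)endomorphism, involutivity will follow by verifying that its square acts as the identity on the generating set $\{T_i,c_j\}$.

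The verifications for $\varphi'$ and $\psi'$ will be the lighter pair. The index reversal $i\mapsto n-i$ preserves the shape of the braid relation and the commutation pattern of distant $T$'s, the sign flip $c_j\mapsto -c_{n+1-j}$ (or $c_j\mapsto -c_j$) preserves $c_j^2=-1$ and anticommutation, and the shifted element $-(T_i+1)$ satisfies the correct quadratic relation since $(T_i+1)^2=T_i+1$. What will require attention is the pair of cross-relations $T_ic_i=c_{i+1}T_i$ and $(T_i+1)c_{i+1}=c_i(T_i+1)$: for $\varphi'$ they transport into each other after the index flip, while for $\psi'$ they swap under the anti-homomorphism (order reversal), which is precisely what the $-(T_i+1)$ form is tailored to encode.

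The substantive content will lie in $\varphi$ and $\psi$, where the image of $T_i$ is $T_{n-i}+c_{n-i}c_{n+1-i}$ (resp.\ $T_i+c_ic_{i+1}$); write $\tilde T_i$ for this image. The quadratic relation $\tilde T_i^2=-\tilde T_i$ will be a four-term expansion, reduced using $T_i^2=-T_i$, $(c_ic_{i+1})^2=-1$, and the cross relation $(T_i+1)c_{i+1}=c_i(T_i+1)$ to simplify the mixed terms. Commutation of distant $\tilde T$'s will be immediate from the fact that the supporting Clifford generators in $\tilde T_i$ and $\tilde T_j$ are disjoint. The hard part will be the braid identity $\tilde T_i\tilde T_{i+1}\tilde T_i=\tilde T_{i+1}\tilde T_i\tilde T_{i+1}$, which opens into eight terms on each side involving zero, one, two, or three pairs of Clifford generators; my plan is to group both sides by the number of $c$-factors and match them block by block, invoking the underlying braid relation $T_iT_{i+1}T_i=T_{i+1}T_iT_{i+1}$ and pushing Clifford generators through $T$'s via the cross-relations. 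This step is purely mechanical but is the most computation-heavy. The remaining $T$-$c$ cross-relations under $\varphi$ and $\psi$ (e.g.\ $\tilde T_i(-c_{n+1-i})=(-c_{n-i})\tilde T_i$ and its shifted companion) will be shorter computations of the same flavour, using the identity $(T_i+1)c_{i+1}=c_i(T_i+1)$ as the main lever.

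Finally, involutivity reduces to an explicit check on generators. For $\varphi$, since $\varphi(c_{n-i}c_{n+1-i})=(-c_{i+1})(-c_i)=-c_ic_{i+1}$, one computes $\varphi^2(T_i)=(T_i+c_ic_{i+1})-c_ic_{i+1}=T_i$ and $\varphi^2(c_j)=c_j$. For $\varphi'$: $\varphi'^2(T_i)=-(-(T_i+1)+1)=T_i$ and $\varphi'^2(c_j)=c_j$. The cases of $\psi$ and $\psi'$ collapse to the same one-line calculations, noting that $\psi^2$ and $\psi'^2$ are honest homomorphisms (squares of anti-homomorphisms), so the same formulas apply verbatim.
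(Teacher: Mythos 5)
The paper itself offers no proof here --- it simply says ``it is straightforward to check'' --- so the intended argument is exactly the generator-and-relations verification you lay out: extend by (anti-)multiplicativity, confirm that the images of $T_i,c_j$ satisfy the defining relations of $\mH\mC_n$ (in reversed order for the anti-homomorphisms), and then check involutivity on generators. Your reduction is sound on every point I spot-checked: the quadratic relation for $\tilde T_i=T_i+c_ic_{i+1}$ collapses via $(c_ic_{i+1})^2=-1$ and the identity $T_ic_ic_{i+1}=-c_ic_{i+1}T_i-c_ic_{i+1}+1$ derived from the cross-relations; distant $\tilde T$'s commute because $c_ic_{i+1}$ and $c_jc_{j+1}$ are commuting even products; the two $T$--$c$ cross-relations swap under $\psi'$ and transport into each other under $\varphi'$, exactly as you say; and your involutivity computations $\varphi^2(T_i)=T_i$, $\psi^2(T_i)=T_i$, etc.\ are correct. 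The only substantive work you defer is the braid identity for $\tilde T_i$ (resp.\ $T_{n-i}+c_{n-i}c_{n+1-i}$); your plan of expanding and matching terms by Clifford degree, repeatedly pushing $c$'s through $T$'s with the cross-relations, is the right mechanical route and is exactly the ``straightforward check'' the paper has in mind. One small economy worth noting: once you have the single identity $T_ic_ic_{i+1}=-c_ic_{i+1}T_i-c_ic_{i+1}+1$, the braid verification becomes more manageable, since it lets you normal-order every term with the Clifford factor on the left before invoking $T_iT_{i+1}T_i=T_{i+1}T_iT_{i+1}$; but this is a matter of bookkeeping, not of substance.
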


\begin{prop}\label{frob}
The 0-Hecke-Clifford algebra $\mH\mC_n$ is a Frobenius superalgebra with even trace map
\[tr_n:\mH\mC_n\rw\bk,\,tr_n(c_DT_w)=\de_{D,\emptyset}\de_{w,w_0},\,D
\subseteq[n],w\in\fs_n,\]
where $w_0$ is the longest element of $\fs_n$. Moreover, $\varphi$ is the corresponding \textit{Nakayama automorphism}.
\end{prop}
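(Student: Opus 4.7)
My plan is to establish the Frobenius property of $\mH\mC_n$ by showing the bilinear form $(a,b) := tr_n(ab)$ is nondegenerate, and then to verify that $\varphi$ from (\ref{Nak}) is the associated Nakayama automorphism. The trace $tr_n$ is plainly a well-defined even linear map, since the distinguished basis element $T_{w_0}$ is even.

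For nondegeneracy I would adopt a leading-coefficient argument on the basis $\{c_D T_w : D \subseteq [n], w \in \fs_n\}$. Given any nonzero $a = \sum_{D,w} \lambda_{D,w} c_D T_w$, set $\ell^* := \max\{\ell(w) : \lambda_{D,w} \neq 0 \text{ for some } D\}$ and fix $(D^*, w^*)$ with $\ell(w^*) = \ell^*$ and $\lambda_{D^*, w^*} \neq 0$. The candidate dual element is $b := c_{(w^*)^{-1}(D^*)} T_{(w^*)^{-1} w_0}$. To analyze $tr_n(c_D T_w \cdot c_E T_v)$ in general, I would use Lemma \ref{ord} to write $T_w c_E = (-1)^{l_{w,E}} c_{w(E)} T_w + \sum_{E' \subseteq E,\, u < w} a_{E',u} c_{E'} T_u$ and combine it with the 0-Hecke product rule $T_u T_v = (-1)^{\ell(u) + \ell(v) - \ell(u * v)} T_{u * v}$, where $*$ denotes the Demazure product. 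With $v = (w^*)^{-1} w_0$ of length $\ell(w_0) - \ell^*$, any $u < w^*$ satisfies $\ell(u) + \ell(v) < \ell(w_0)$, forcing $u * v \neq w_0$; hence the sole contribution to the $T_{w_0}$-coefficient of $ab$ comes from the leading term with $w = w^*$, which additionally requires $c_D c_{w^*(E)} = \pm 1$, i.e.\ $D = D^*$ and $E = (w^*)^{-1}(D^*)$. This yields $tr_n(ab) = \pm \lambda_{D^*, w^*} \neq 0$, so $\mb{ker}\,tr_n$ contains no nonzero left ideal.

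For the Nakayama automorphism I would verify the identity $tr_n(ab) = (-1)^{|a||b|} tr_n(\varphi(b) a)$ on basis elements; since $\varphi$ is even and multiplicative, this reduces in turn to a check on the generating pairs $(T_i, T_j)$, $(T_i, c_j)$, and $(c_i, c_j)$, using the same expansion machinery as above. The formulas $\varphi(T_i) = T_{n-i} + c_{n-i} c_{n+1-i}$ and $\varphi(c_j) = -c_{n+1-j}$ implement the reversal symmetry $i \mapsto n+1-i$ (which stabilizes $w_0$) coupled with Clifford sign twists; the additive correction $c_{n-i} c_{n+1-i}$ in $\varphi(T_i)$ is designed precisely to compensate for the super signs incurred when odd Clifford generators are moved past $T_i$. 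The main obstacle will be the careful sign bookkeeping—mediating between the relation $T_i^2 = -T_i$, the Clifford anticommutation, and the super-Frobenius sign $(-1)^{|a||b|}$—but the triangular structure revealed by Lemma \ref{ord} together with the Demazure product rule keeps the computation manageable.
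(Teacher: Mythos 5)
Your non-degeneracy argument is essentially the paper's: the paper takes a nonzero element $b$ of a left ideal, picks $\sigma$ of maximal length with some $b_{D',\sigma}\neq 0$, and left-multiplies by $c_{w_0\sigma^{-1}(D')}T_{w_0\sigma^{-1}}$ to extract a nonzero trace, exactly your leading-coefficient argument with the roles of the two factors swapped; either version works since for a finite-dimensional algebra the two one-sided non-degeneracy conditions are equivalent. Your use of Lemma~\ref{ord} and the length bound on Demazure products is precisely how the triangularity is exploited.

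The plan for the Nakayama automorphism has a genuine gap. You propose to reduce the identity $tr_n(ab)=(-1)^{|a||b|}tr_n(\varphi(b)a)$ to the generating pairs $(T_i,T_j)$, $(T_i,c_j)$, $(c_i,c_j)$, i.e.\ to both $a$ and $b$ ranging over generators. That reduction is not valid. The identity does extend multiplicatively in the \emph{second} slot: if it holds for all $a$ and for $b=b_1$ and $b=b_2$, a short computation ($tr_n(ab_1b_2)=(-1)^{(|a|+|b_1|)|b_2|}tr_n(\varphi(b_2)ab_1)=(-1)^{|a||b_1b_2|}tr_n(\varphi(b_1)\varphi(b_2)a)$) gives it for $b=b_1b_2$. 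But the analogous extension in the \emph{first} slot fails: trying to pass from $a_1,a_2$ to $a_1a_2$ produces an instance of the same identity with $a$ again arbitrary and $b$ a generator, so the induction does not close if $a$ is restricted to generators. The correct (and what the paper does) is to verify the identity for $a=c_DT_w$ a \emph{general} basis element and $b$ ranging over the generators $T_i,c_j$; the paper then splits the $T_i$-case into four subcases according to the position of $w$ relative to $w_0$ and $w_0s_i$, and handles the $c_j$-case directly. In particular, if you only check pairs of generators you will never see the $c_D$-dependence in identity~(2), which in the paper carries the sign $(-1)^{|D|}$ that matches the super-Frobenius sign. Adjusting your plan to verify on $(c_DT_w,\,\text{generator})$-pairs, with the same Lemma~\ref{ord} machinery, puts it back on the paper's track.
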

\begin{proof}
We only need to prove that $\mb{ker }tr_n$ contains no non-zero left ideals. Suppose $I$ is a non-zero left ideal of $\mH\mC_n$. We choose an element $b=\sum_{D\subseteq[n]\atop w\in\fs_n}b_{D,w}c_DT_w\in I\backslash\{0\}$ and let $\si$ be a maximal length element in the set $\{w\in\fs_n:b_{D',w}\neq0\mb{ for some }D'\subseteq[n]\}$. Then by Lemma \ref{ord} we have
\[\begin{split}
tr_n&(c_{w_0\si^{-1}(D')}T_{w_0\si^{-1}}b)
=b_{D',\si}tr_n(c_{w_0\si^{-1}(D')}T_{w_0\si^{-1}}c_{D'}T_\si)\\
&=(-)^{l_{w_0\si^{-1},D'}}b_{D',\si}tr_n(c_{w_0\si^{-1}(D')}^2T_{w_0})
=(-)^{l_{w_0\si^{-1},D'}+{|D'|+1\choose2}}b_{D',\si}\neq0.
\end{split}\]
Thus $I\nsubseteq\mb{ker }tr_n$. To show that $\varphi$ is the corresponding Nakayama automorphism, it suffices to show that

(1)\quad$tr_n(c_DT_wT_i)=tr_n((T_{n-i}+c_{n-i}c_{n+1-i})c_DT_w)$,

(2)\quad$tr_n(c_DT_wc_j)=(-1)^{|D|}tr_n(-c_{n+1-j}c_DT_w)$

\noindent
for all $i\in\{1,\dots,n-1\},j\in\{1,\dots,n\},D\subseteq[n]\mb{ and }w\in\fs_n$.

For (1) we break the proof into four cases as in \cite[Lemma 4.2]{SY}. When $w=w_0s_i$, $tr_n(c_DT_wT_i)=tr_n(c_DT_{w_0})=\de_{D,\emptyset}$. On the other hand,
\[\begin{split}
tr_n&((T_{n-i}+c_{n-i}c_{n+1-i})c_DT_w)
=tr_n(c_{s_{n-i}(D)}T_{n-i}T_w)+tr_n(c_{n-i}c_{n+1-i}c_DT_w)\\
&=tr_n(c_{s_{n-i}(D)}T_{w_0})=\de_{D,\emptyset},
\end{split}\]
where we use relation (\ref{com}) for the first equality and the identity $w_0s_i=s_{n-i}w_0$ for the second last one.

When $w=w_0$, $tr_n(c_DT_{w_0}T_i)=-tr_n(c_DT_{w_0})=-\de_{D,\emptyset}$. On the other hand, if $D\neq\emptyset,\{n-i,n+1-i\}$, then $tr_n((T_{n-i}+c_{n-i}c_{n+1-i})c_DT_{w_0})=0$ by relation (\ref{com}). Otherwise, for $D=\emptyset$,
\[tr_n((T_{n-i}+c_{n-i}c_{n+1-i})T_{w_0})
=-tr_n(T_{w_0})=-1.\]
For $D=\{n-i,n+1-i\}$,
\[\begin{split}
tr_n&((T_{n-i}+c_{n-i}c_{n+1-i})c_{n-i}c_{n+1-i}T_{w_0})
=tr_n((-c_{n-i}c_{n+1-i}(T_{n-i}+1)+1)T_{w_0})+tr_n(-T_{w_0})\\
&=tr_n(-c_{n-i}c_{n+1-i}(T_{n-i}+1)T_{w_0})=0.
\end{split}\]
The rest two cases when $\ell(w)\leq\ell(w_0)-2$ or $\ell(w)=\ell(w_0)-1$ but $w\neq w_0s_i$ are similar to check. For (2),
\[tr_n(c_DT_wc_j)=tr_n(c_Dc_{w(j)}T_w)=-\de_{D,\{w(j)\}}\de_{w,w_0}
=-\de_{D,\{n+1-j\}}\de_{w,w_0}=(-1)^{|D|}tr_n(-c_{n+1-j}c_DT_w).\]
\end{proof}


\begin{prop}
For any $\al\vDash n$,

(1) we have a supermodule isomorphism between
$\tilde{S}_{\al^*}$ and
$^\varphi(\tilde{S}_\al)$ of degree $\bar{n}$, sending $c_D\eta_{\al^*}$ to $(-1)^{n|D|}\varphi(c_D)c_{[n]}\eta_\al$. In particular, $^\varphi(HClS_{P(\al)})\cong HClS_{P(\al^*)}$.

(2) we have an even supermodule isomorphism between
$\tilde{S}_{\al^*}$ and
$^{\varphi'}(\tilde{S}_\al)$, sending $c_D\eta_{\al^*}$ to $\varphi'(c_D)\eta_\al$. In particular,
$^{\varphi'}(HClS_{P(\al)})\cong HClS_{P(\al^*)}$.

(3) we have an even supermodule isomorphism between
$\tilde{S}_\al$ and
$^\psi({\tilde{S}_\al}^*)$, sending $c_D\eta_\al$ to $c_D._\psi\zeta_\al$,
where $\zeta_\al$ is the dual of $\eta_\al$ with respect to the standard basis $\{c_D\eta_\al\}$ of $\tilde{S}_\al$. In particular, $^\psi({HClS_P}^*)\cong HClS_P$ for any peak set $P$ in $[n]$.

(4) we have a supermodule isomorphism between
$\tilde{S}_\al$ and
$^{\psi'}({\tilde{S}_\al}^*)$ of degree $\bar{n}$, sending $c_D\eta_\al$ to $(-1)^{n|D|}c_D._{\psi'}\xi_\al$,
where $\xi_\al$ is the dual of $c_{[n]}\eta_\al$ with respect to the standard basis $\{c_D\eta_\al\}$ of $\tilde{S}_\al$. In particular, $^{\psi'}({HClS_P}^*)\cong HClS_P$ for any peak set $P$ in $[n]$.
\end{prop}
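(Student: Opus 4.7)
The plan is to treat each part by direct verification: define the prescribed linear map on the basis $\{c_D\eta_{\alpha^*}\}_{D\subseteq[n]}$ (in (1), (2)) or $\{c_D\eta_\alpha\}_{D\subseteq[n]}$ (in (3), (4)) of the source, and check that it intertwines the $\mH\mC_n$-action with the twisted action on the target. Since each prescription is bijective between basis sets up to signs, bijectivity is automatic; the content is module compatibility. Because the source is cyclic under $\mH\mC_n$ (generated by $\eta_{\alpha^*}$ or $\eta_\alpha$, using the freeness of $\mH\mC_n$ over $\mH_n$), once the image of the cyclic generator satisfies the defining relations (\ref{hec}) under the twisted action, the full intertwining follows. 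The relevant identity in parts (1) and (2) is $D(\alpha^*)=\{i\in[n-1]:n-i\notin D(\alpha)\}$.

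For part (1), the cyclic generator $\eta_{\alpha^*}$ is sent to $c_{[n]}\eta_\alpha$, and I would verify that $\varphi(T_i)\bigl(c_{[n]}\eta_\alpha\bigr)=(T_{n-i}+c_{n-i}c_{n+1-i})c_{[n]}\eta_\alpha$ equals $-c_{[n]}\eta_\alpha$ when $i\in D(\alpha^*)$ and vanishes when $i\notin D(\alpha^*)$. By (\ref{com}) the first summand equals $-c_{[n]}(T_{n-i}+1)\eta_\alpha+c_{[n]\setminus\{n-i,n+1-i\}}\eta_\alpha$, while a direct Clifford computation (using $c_ic_j=-c_jc_i$ and $c_i^2=-1$) gives $c_{n-i}c_{n+1-i}c_{[n]}=-c_{[n]\setminus\{n-i,n+1-i\}}$; splitting into the subcases $n-i\in D(\alpha)$ and $n-i\notin D(\alpha)$ the two contributions combine to the required value. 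The overall sign $(-1)^{n|D|}$ in the formula for $\Psi(c_D\eta_{\alpha^*})$ is precisely what is forced by $\Psi$ having degree $\bar n$ together with $\Psi(av)=(-1)^{|a|n}\varphi(a)\Psi(v)$. Part (2) runs parallel but more cheaply: $\varphi'(T_i)\eta_\alpha=-(T_{n-i}+1)\eta_\alpha$ is already $0$ when $n-i\in D(\alpha)$ and $-\eta_\alpha$ when $n-i\notin D(\alpha)$, matching (\ref{hec}) for $\alpha^*$, and no auxiliary top-Clifford element is needed, so the map is even.

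For parts (3) and (4) I would check that $\zeta_\alpha$ (resp.\ $\xi_\alpha$) is an eigenvector for $\alpha$ under the twisted left action. For (3), $(T_i._\psi\zeta_\alpha)(c_E\eta_\alpha)=\zeta_\alpha(\psi(T_i)c_E\eta_\alpha)=\zeta_\alpha\bigl((T_i+c_ic_{i+1})c_E\eta_\alpha\bigr)$; expanding via (\ref{com}) and reading off the $\eta_\alpha$-coefficient yields $-\de_{E,\emp}$ when $i\in D(\alpha)$ and $0$ otherwise, so $T_i._\psi\zeta_\alpha$ equals $-\zeta_\alpha$ or $0$ as required. Part (4) is analogous, with $\psi'(T_i)=-(T_i+1)$ replacing $\psi(T_i)$ and $\xi_\alpha$ replacing $\zeta_\alpha$; the extra sign $(-1)^{n|D|}$ and the total degree $\bar n$ arise because $\xi_\alpha$ is dual to the top-Clifford-degree element $c_{[n]}\eta_\alpha$.

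The main obstacle will be sign bookkeeping in (1) and (4): there the maps have odd total degree $\bar n$ when $n$ is odd, and the formulas interlace rearrangement of Clifford generators with commutation of $T_{n-i}$ past Clifford monomials via (\ref{com}), so care is required to keep all signs consistent with $(-1)^{n|D|}$. Once the four isomorphisms for $\tilde{S}_\alpha$ are established, the ``in particular'' statements about simple supermodules follow from (\ref{dec}): twisting by an algebra (anti-)automorphism commutes with the isotypic decomposition of $\tilde{S}_\alpha$ and preserves type M/Q (by Theorem \ref{end} and Proposition \ref{type}), so $^\varphi(HClS_{P(\alpha)})$ must coincide with the unique indecomposable summand of $^\varphi\tilde{S}_\alpha\cong\tilde{S}_{\alpha^*}$ of the appropriate type, namely $HClS_{P(\alpha^*)}$; the remaining three cases are identical.
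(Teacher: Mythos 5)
Your proposal is correct and follows essentially the same route as the paper: verify that the image of the cyclic generator $\eta_{\alpha^*}$ (resp.\ $\eta_\alpha$) satisfies the one-dimensional relations (\ref{hec}) under the twisted $\mH_n$-action, invoke the Hom-tensor adjunction for induction to extend to an $\mH\mC_n$-supermodule homomorphism, and conclude the isomorphism by dimension count (your ``basis-to-basis up to sign'' observation is an equivalent way to see bijectivity). The ``in particular'' claims via the isotypic decomposition (\ref{dec}) are handled in the paper exactly as you indicate, so nothing is missing.
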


\begin{proof}
(1) From relations (\ref{hec}) and (\ref{com}) we have
\[T_i._\varphi c_{[n]}\eta_\al=(T_{n-i}+c_{n-i}c_{n+1-i}).c_{[n]}\eta_\al
=-c_{[n]}(T_{n-i}+1).\eta_\al
=\begin{cases}
-c_{[n]}\eta_\al,&\mb{if }i\in D(\alpha^*),\\
0,&\mbox{otherwise}.
\end{cases}\]
Hence, there exists an $\mH_n$-module homomorphism from $S_{\al^*}$ to
$\mbox{Res}_{\mH_n}^{\mH\mC_n}\,{^\varphi(\tilde{S}_\al)}$, sending $\eta_{\al^*}$ to $c_{[n]}\eta_\al$. By the universal property of induction functors, we obtain an $\mH\mC_n$-supermodule homomorphism from $\tilde{S}_{\al^*}$ to $^\varphi(\tilde{S}_\al)$ sending $c_D\eta_{\al^*}$ to $(-1)^{n|D|}\varphi(c_D)c_{[n]}\eta_\al$. It is obviously surjective thus an isomorphism by dimension argument.

Meanwhile, for any $i\in[2,n-1]$,
\[i\in P(\al)\Leftrightarrow i\in D(\al),i-1\notin D(\al)
\Leftrightarrow n-i\notin D(\al^*),n+1-i\in D(\al^*)\Leftrightarrow n+1-i\in P(\al^*),\]
which means that $|P(\al)|=|P(\al^*)|$, thus
$HClS_{P(\al^*)}\cong{^\varphi(HClS_{P(\al)})}$ by (\ref{dec}).

(2) From relation (\ref{hec}) we have
\[T_i._{\varphi'}\eta_\al=-(T_{n-i}+1).\eta_\al
=\begin{cases}
-\eta_\al,& \mb{if }i\in D(\alpha^*),\\
0,&\mbox{otherwise}.
\end{cases}\]
Hence, there exists an $\mH_n$-module homomorphism from $S_{\al^*}$ to
$\mbox{Res}_{\mH_n}^{\mH\mC_n}\,{^{\varphi'}(\tilde{S}_\al)}$, sending $\eta_{\al^*}$ to $\eta_\al$. The rest of the proof is nearly the same as (1).

(3) By relation (\ref{com}) we have
\[\begin{split}
T_i._\psi\zeta_\al(c_D\eta_\al)&=\zeta_\al((T_i+c_ic_{i+1}).c_D\eta_\al)
=\de_{D,\emptyset}\zeta_\al(T_i.\eta_\al)\\
&=\begin{cases}
-1,& \mb{if }D=\emptyset,i\in D(\alpha),\\
0,& \mbox{otherwise}.
\end{cases}
\end{split}\]
That is, $T_i._\psi\zeta_\al=-\zeta_\al$ if $i\in D(\alpha)$ and 0 otherwise. Hence, there exists an $\mH_n$-module homomorphism from $S_\al$ to $\mbox{Res}_{\mH_n}^{\mH\mC_n}\,{^\psi({\tilde{S}_\al}^*)}$, sending $\eta_\al$ to $\zeta_\al$. Again by the universal property of induction functors, we obtain an $\mH\mC_n$-supermodule homomorphism from $\tilde{S}_\al$ to ${^\psi({\tilde{S}_\al}^*)}$, sending $c_D\eta_\al$ to $c_D._\psi\zeta_\al$. It is also obviously surjective thus an isomorphism by dimension argument.

(4) By relation (\ref{com}) we have
\[\begin{split}
T_i._{\psi'}\xi_\al(c_D\eta_\al)&=-\xi_\al((T_i+1).c_D\eta_\al)
=-\de_{D,[n]}\xi_\al((T_i+1).c_{[n]}\eta_\al)\\
&=-\de_{D,[n]}\xi_\al((-c_{[n]}T_i+c_{[n]\backslash\{i,i+1\}}).\eta_\al)
=\de_{D,[n]}\xi_\al(c_{[n]}T_i.\eta_\al)\\
&=\begin{cases}
-1,& \mb{if }D=[n],i\in D(\alpha),\\
0,&\mbox{otherwise}.
\end{cases}
\end{split}\]
That is, $T_i._{\psi'}\xi_\al=-\xi_\al$ if $i\in D(\alpha)$ and 0 otherwise. Hence, there exists an $\mH_n$-module homomorphism from $S_\al$ to $\mbox{Res}_{\mH_n}^{\mH\mC_n}\,{^{\psi'}({\tilde{S}_\al}^*)}$, sending $\eta_\al$ to $\xi_\al$. The rest of the proof is nearly the same as (3).
\end{proof}

For any $m,n\in\bN$, denote $\mH_{m,n}:=\mH_m\otimes \mH_n,\,\mH\mC_{m,n}:=\mH\mC_m\otimes \mH\mC_n$.
Let $\iota_n:\mH_n\rightarrow \mH\mC_n,\,\mu_{m,n}:\mH_{m,n}\rightarrow \mH_{m+n},\,\tilde{\mu}_{m,n}:\mH\mC_{m,n}\rightarrow \mH\mC_{m+n}$ be the natural embeddings.

\begin{prop}\label{flip}
For the tower of 0-Hecke-Clifford superalgebras, we have an isomorphism of functors
\[{_\varphi\mb{Res}}\cong\tau_{12}\circ\mb{Res}\]
on $\mH\mC$-mod (hence also on $\mH\mC$-pmod).
\end{prop}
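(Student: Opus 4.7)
The plan is to reduce the claimed natural isomorphism to a single algebra identity, namely
\[\varphi_{m+n}\circ\tilde{\mu}_{m,n}=\tilde{\mu}_{n,m}\circ\tau_{12}\circ(\varphi_m\ot\varphi_n),\]
where $\tau_{12}:\mH\mC_{m,n}\rw\mH\mC_{n,m}$ denotes the super-flip isomorphism induced by (\ref{perm}). Unwinding the definition $_\varphi\mb{Res}_{m,n}^{m+n}=(\varphi_m\ot\varphi_n)\circ\mb{Res}_{m,n}^{m+n}\circ\varphi_{m+n}^{-1}$, the action of $x\in\mH\mC_{m,n}$ on $_\varphi\mb{Res}_{m,n}^{m+n}(M)$ becomes $x.m=\varphi_{m+n}^{-1}\lb\tilde{\mu}_{m,n}((\varphi_m\ot\varphi_n)(x))\rb m$, whereas the action of $x$ on $\tau_{12}\lb\mb{Res}_{n,m}^{m+n}(M)\rb$ is $x.m=\tilde{\mu}_{n,m}(\tau_{12}(x))m$. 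Combined with the involutivity $\varphi^2=\mi$, the displayed identity forces these two actions to coincide, so the identity map on $M$ supplies the desired natural isomorphism.

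The first step is to verify $\varphi_n^2=\mi$ directly from (\ref{Nak}): on $c_j$ this is the involution $j\mapsto n+1-j$, and on $T_i$ one computes
\[\varphi_n^2(T_i)=\varphi_n(T_{n-i}+c_{n-i}c_{n+1-i})=(T_i+c_ic_{i+1})+(-c_{i+1})(-c_i)=T_i.\]
The second step is to check the main algebra identity on the four families of generators $T_i\ot1$, $1\ot T_j$, $c_i\ot1$, $1\ot c_j$ of $\mH\mC_{m,n}$; both sides being even superalgebra homomorphisms, a check on generators suffices. The essential point is that under $\varphi_{m+n}$, generators originating in the first (resp. second) tensor slot of $\tilde{\mu}_{m,n}$ land entirely in the second (resp. first) tensor slot of $\tilde{\mu}_{n,m}$, and the index shifts $i\mapsto m+n-i$, $j\mapsto m+n+1-j$ align perfectly with the single-factor maps $\varphi_m,\varphi_n$.

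The extension to $\mH\mC\mb{-pmod}$ is automatic since each $\varphi_k$ is an algebra automorphism and hence preserves projectivity. The main obstacle, modest but non-trivial, will be bookkeeping the signs produced by the super-flip: $\tau_{12}$ introduces the factor $(-1)^{|a||b|}$ on $a\ot b$, which must exactly cancel with the sign arising when the product of two odd Clifford elements is rewritten in the opposite order inside $\mH\mC_{m+n}$. The representative mixed case $c_i\ot c_j$ with $i\in[m],\,j\in[n]$ reads $\varphi_{m+n}(c_ic_{m+j})=c_{m+n+1-i}c_{n+1-j}=-c_{n+1-j}c_{m+n+1-i}$, matching $\tilde{\mu}_{n,m}\lb\tau_{12}(\varphi_m(c_i)\ot\varphi_n(c_j))\rb$ on the nose; once this sample case is nailed down, the remaining generator checks are routine.
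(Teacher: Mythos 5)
Your reduction of the claim to the single superalgebra identity $\varphi_{m+n}\circ\tilde{\mu}_{m,n}=\tilde{\mu}_{n,m}\circ f_{12}\circ(\varphi_m\ot\varphi_n)$ (where $f_{12}$ is the algebra-level flip underlying the functor $\tau_{12}$), together with the involutivity $\varphi^2=\mb{id}$, is exactly the route the paper takes, and your generator-by-generator verification, including the sign bookkeeping on $c_i\ot c_j$, is correct. The only cosmetic difference is that the paper packages the resulting isomorphism through the bimodule description ${^{f}}\mH\mC_{m+n}\ot_{\mH\mC_{m+n}}\mb{--}$ of the restriction functor rather than spelling it out as the identity map on the underlying space, and it leaves the generator check implicit.
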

\begin{proof}
For any $m,n\in\bN$, let $\mb{Res}_{m,n}^{m+n}:=\mb{Res}_{\mH\mC_{m,n}}^{\mH\mC_{m+n}}$
and $\varphi_{m,n}:=\varphi_m\ot\varphi_n$.
It needs to prove that there exists an isomorphism of functors from
$\mH\mC_{m+n}$-mod to $\mH\mC_{m,n}$-mod:
\beq\label{iso}\varphi_{m,n}\circ\mb{Res}_{m,n}^{m+n}\circ\varphi_{m+n}^{-1}
\cong\tau_{12}\circ\mb{Res}_{n,m}^{m+n}.
\eeq
By the definition of the Nakayama automorphisms $\varphi_n\,(n\in\bN)$ in (\ref{Nak}), we have
\[\varphi_{m+n}\circ\tilde{\mu}_{m,n}
=\tilde{\mu}_{n,m}\circ\varphi_{n,m}\circ f_{12},\]
where $f_{12}:\mH\mC_m\ot\mH\mC_n\rw\mH\mC_n\ot\mH\mC_m$ is the flip isomorphism (\ref{perm}). Hence, the LHS of (\ref{iso}) is
\[^{\tilde{\mu}_{m,n}\circ\varphi_{m,n}}(\mH\mC_{m+n})^{\varphi_{m+n}}
\ot_{\mH\mC_{m+n}}\mb{--}\cong
{^{\varphi_{m+n}\circ\tilde{\mu}_{n,m}\circ f_{12}}}(\mH\mC_{m+n})^{\varphi_{m+n}}
\ot_{\mH\mC_{m+n}}\mb{--}\cong
{^{\tilde{\mu}_{n,m}\circ f_{12}}}\mH\mC_{m+n}
\ot_{\mH\mC_{m+n}}\mb{--},\]
which is exactly the RHS of (\ref{iso}).
\end{proof}

\begin{prop}\label{comm}
For the tower of 0-Hecke-Clifford superalgebras, we have an isomorphism of functors $\mb{Res}\cong\tau_{12}\circ\mb{Res}$
 on $\mH\mC$-pmod.
\end{prop}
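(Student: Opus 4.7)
The plan is to leverage Prop \ref{flip} together with an adjunction argument based on the commutativity of induction up to a Nakayama twist. By Prop \ref{flip}, ${_\varphi\mb{Res}}\cong\tau_{12}\circ\mb{Res}$ holds on all of $\mH\mC\mb{-mod}$, so it suffices to establish that $\mb{Res}\cong{_\varphi\mb{Res}}$ on $\mH\mC\mb{-pmod}$, i.e.\ that the Nakayama twist becomes trivial once one restricts a projective supermodule.

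I would proceed by the Yoneda lemma applied to the standard adjunction between $\mb{Ind}$ and $\mb{Res}$. For any $L\in\mH\mC_{m,n}\mb{-mod}$ and $P\in\mH\mC_{m+n}\mb{-pmod}$ one has
\[
\mb{Hom}(\mb{Ind}_{m,n}^{m+n}L,P)\cong\mb{Hom}(L,\mb{Res}_{m,n}^{m+n}P).
\]
If one can exhibit a natural isomorphism $\mb{Ind}_{m,n}^{m+n}(M\ot N)\cong\mb{Ind}_{n,m}^{m+n}(N\ot M)$ for $M\in\mH\mC_m\mb{-mod}$ and $N\in\mH\mC_n\mb{-mod}$, i.e.\ functor-level commutativity of induction, then Yoneda immediately yields $\mb{Res}_{m,n}^{m+n}P\cong\tau_{12}\circ\mb{Res}_{n,m}^{m+n}P$. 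The natural candidate intertwiner is the map $x\ot(u\ot v)\mapsto xT_\pi\ot(v\ot u)$ on $\mH\mC_{m+n}\ot_{\mH\mC_{m,n}}(M\ot N)$, where $\pi\in\fs_{m+n}$ is the block-shuffle sending $(1,\dots,m,m+1,\dots,m+n)$ to $(n+1,\dots,n+m,1,\dots,n)$, so that formal conjugation by $\pi$ would carry $\tilde\mu_{m,n}$ into $\tilde\mu_{n,m}\circ f_{12}$.

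The main obstacle is the non-invertibility of $T_\pi$ in the 0-Hecke-Clifford algebra, which is precisely why the analog of this isomorphism on $\mH\mC\mb{-mod}$ can only be proved up to a Nakayama twist (Prop \ref{flip}). On $\mH\mC\mb{-pmod}$, however, one can bypass this by using that every indecomposable projective is a direct summand of some $\tilde P_\alpha=\mb{Ind}_{\mH_n}^{\mH\mC_n}P_\alpha$ and appealing to the explicit module structure (\ref{act}) of $P_\alpha$, for which the 0-Hecke shuffle $T_\pi$ acts in a sufficiently controlled way. Using the formula (\ref{Nak}) for $\varphi$, the Clifford correction $c_{n-i}c_{n+1-i}$ appearing in $\varphi(T_i)$ exactly cancels the commutation defect dictated by (\ref{com}), producing the desired untwisted functor isomorphism on pmod.
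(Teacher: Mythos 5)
Your plan has a structural flaw that cannot be repaired along the lines you indicate. The Yoneda/adjunction argument you sketch would, if it worked, prove too much: if there were a genuine natural isomorphism $\mb{Ind}_{m,n}^{m+n}(M\ot N)\cong\mb{Ind}_{n,m}^{m+n}(N\ot M)$ for all $M,N$, then $\mb{Hom}(M\ot N,\mb{Res}_{m,n}^{m+n}L)\cong\mb{Hom}(M\ot N,\tau_{12}\mb{Res}_{n,m}^{m+n}L)$ naturally in $M\ot N$ for \emph{every} $L\in\mH\mC_{m+n}\mb{-mod}$, and Yoneda would then force $\mb{Res}\cong\tau_{12}\circ\mb{Res}$ on all of $\mH\mC\mb{-mod}$. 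But Prop.~\ref{flip} tells us exactly that this fails without the Nakayama twist on the full module category, so commutativity of $\mb{Ind}$ cannot hold — which you concede by noting $T_\pi$ is not invertible. Restricting attention to projective $L$ does not rescue the argument, because Yoneda applied to $\mb{Hom}(-,\mb{Res}L)$ runs over the whole domain category and does not ``see'' that $L$ is projective; there is no weaker form of $\mb{Ind}$-commutativity that would furnish the needed natural isomorphism of represented functors only for projective $L$. The final step (the Clifford correction in $\varphi(T_i)$ ``exactly cancels the commutation defect'' via (\ref{com}) and (\ref{act})) is asserted but not carried out, and it is precisely where the real work would have to live.

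The paper takes a genuinely different route that avoids the $\mb{Ind}$-commutativity issue entirely. The key observation is the factorization (\ref{res}):
\[
\mb{Res}_{\mH\mC_{m,n}}^{\mH\mC_{m+n}}\tilde{P}_\al\cong
(\mH\mC_{m,n})^{\iota_{m,n}}\ot_{\mH_{m,n}}\mb{Res}_{\mH_{m,n}}^{\mH_{m+n}}P_\al,
\]
which reduces restriction of $\tilde{P}_\al$ in the Clifford tower to restriction of $P_\al$ in the plain $0$-Hecke tower. Using Lemma~\ref{twist} to commute the $\tau_{12}$ twist past the tensor, the cocommutativity of $\mb{Res}$ on $\mH\mC\mb{-pmod}$ is then deduced from cocommutativity of $\mb{Res}$ on $\mH\mb{-pmod}$, which holds because $\mK(\mH)\cong\mb{NSym}$ is cocommutative. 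This sidesteps both the Nakayama twist and any need for an explicit $T_\pi$-intertwiner. If you want to pursue your route of showing $\mb{Res}\cong{_\varphi\mb{Res}}$ on pmod directly, you would have to produce an honest untwisting isomorphism for each $\tilde{P}_\al$ rather than appeal to an $\mb{Ind}$-commutativity that provably cannot hold.
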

\begin{proof}
For any $m,n\in\bN$, let $\iota_{m,n}:=\iota_m\ot\iota_n$. First note that the following isomorphism of functors from $\mH_{m+n}\mb{-mod}$ to $\mH\mC_{m,n}\mb{-mod}$ holds:
\[{^{\tilde{\mu}_{m,n}}}(\mH\mC_{m+n})^{\iota_{m+n}}\ot_{\mH_{m+n}}\mb{--}
\cong(\mH\mC_{m,n})^{\iota_{m,n}}\ot_{\mH_{m,n}}
({^{\mu_{m,n}}}(\mH_{m+n})^{\iota_{m+n}}\ot_{\mH_{m+n}}\mb{--}).\]
In particular, for $\al\vDash m+n$, we choose $P_\al\in \mH_{m+n}\mb{-pmod}$ to get that
\beq\label{res}
\mb{Res}_{\mH\mC_{m,n}}^{\mH\mC_{m+n}}\tilde{P}_\al\cong {^{\tilde{\mu}_{m,n}}}(\mH\mC_{m+n})^{\iota_{m+n}}\ot_{\mH_{m+n}}P_\al\cong
(\mH\mC_{m,n})^{\iota_{m,n}}\ot_{\mH_{m,n}}
\mb{Res}_{\mH_{m,n}}^{\mH_{m+n}}P_\al.\eeq
Now by Lemma \ref{twist}, we have
\[\begin{split}
^{\tau_{12}}\mb{Res}_{\mH\mC_{m,n}}^{\mH\mC_{m+n}}\tilde{P}_\al&\cong
{^{\tau_{12}}}(\mH\mC_{m,n})^{\iota_{m,n}}\ot_{\mH_{m,n}}
\mb{Res}_{\mH_{m,n}}^{\mH_{m+n}}P_\al\\
&\cong
(\mH\mC_{n,m})^{\tau_{12}\circ\iota_{m,n}}\ot_{\mH_{m,n}}
\mb{Res}_{\mH_{m,n}}^{\mH_{m+n}}P_\al\\
&\cong(\mH\mC_{n,m})^{\iota_{n,m}}\ot_{\mH_{n,m}}
{^{\tau_{12}}}\mb{Res}_{\mH_{m,n}}^{\mH_{m+n}}P_\al.
\end{split}\]
As NSym is cocommutative, so is $\mK(\mH)$ by the Frobenius map (\ref{fro}), i.e. $\mb{Res}\cong\tau_{12}\circ\mb{Res}$
on $\mH$-pmod, thus it also holds on $\mH\mC$-pmod by the above discussion. In fact, if write $[\mb{Res} P_\al]=\sum c_{\al_1,\al_2}^\al[P_{\al_1}]\ot[P_{\al_2}]$ (The explicit formula is described by the shuffle product, see \cite{MR},\cite[(16)]{TU}), then by (\ref{res}) we simply have
$[\mb{Res}\tilde{P}_\al]=\sum c_{\al_1,\al_2}^\al[\tilde{P}_{\al_1}]\ot[\tilde{P}_{\al_2}]$.
\end{proof}

By Prop. \ref{dual}, \ref{frob}, \ref{flip}, \ref{comm}, we finally conclude that $(\widetilde{\mK},\,\widetilde{\mG})$ forms a dual pair of graded Hopf algebras with respect to the pair (\ref{pa1}). That is, for $P,Q\in\mH\mC\mb{-pmod},M,N\in\mH\mC\mb{-mod}$,
\[\begin{array}{l}
\lan\mb{Ind}([P]\ot[Q]),[M]\ran=\lan[P]\ot[Q],\mb{Res}[M]\ran,\\
\lan[P],\mb{Ind}([M]\ot[N])\ran=\lan\mb{Res}[P],[M]\ot[N]\ran.
\end{array}\]

It is easy to see that there exists an algebra involution $\bar{\varphi}$ of $\mH_n$ defined by
\[\bar{\varphi}(T_i)=T_{n-i},\,i=1,\dots,n-1.\]
$\bar{\varphi}$ is also the Nakayama automorphism of $\mH_n$ \cite[Lemma 4.2]{SY}. Now we have the following result
\begin{prop}\label{na}
For any $\al\vDash n$,
\[{^{\bar{\varphi}}}S_\al\cong S_{\bar{\al}},\,{^{\bar{\varphi}}}P_\al\cong P_{\bar{\al}}.\]
\end{prop}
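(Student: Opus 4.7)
The plan is to handle the two isomorphisms in order. The first is a direct one-dimensional computation; the second follows formally from the first because twisting by an automorphism is an autoequivalence of the module category.

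For the simple module part, I would work with the generator $\eta_\al \in S_\al$. By definition of the twist, $T_i._{\bar\varphi}\eta_\al = \bar\varphi(T_i)\eta_\al = T_{n-i}\eta_\al$, which by (\ref{hec}) equals $-\eta_\al$ when $n-i \in D(\al)$ and $0$ otherwise. The reversal identity $D(\bar\al) = \{i \in [n-1] : n-i \in D(\al)\}$ recalled right after the definition of $\bar\al$ then gives exactly the defining relations of $S_{\bar\al}$. Hence the $\bk$-linear map $\eta_\al \mapsto \eta_{\bar\al}$ extends to an even $\mH_n$-supermodule isomorphism ${}^{\bar\varphi}S_\al \cong S_{\bar\al}$.

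For the projective cover, the key observation is that $\bar\varphi$ is an involution of $\mH_n$ (it sends $T_i \mapsto T_{n-i}$ and hence $T_{n-i} \mapsto T_i$), so the functor ${}^{\bar\varphi}(-)$ is an autoequivalence of $\mH_n\mb{-mod}$. This autoequivalence sends projectives to projectives and indecomposables to indecomposables, and commutes with taking the head. Therefore ${}^{\bar\varphi}P_\al$ is an indecomposable projective supermodule whose head is ${}^{\bar\varphi}(S_\al) \cong S_{\bar\al}$ by the first part, so by uniqueness of the projective cover ${}^{\bar\varphi}P_\al \cong P_{\bar\al}$.

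There is no real obstacle here: the descent reversal identity for $D(\bar\al)$ does all the work, and the passage from $S_\al$ to $P_\al$ is categorical. If a more hands-on argument is preferred for the projective case, one can instead define the map on the explicit basis $\{u_w : w \in \fd_\al\}$ via $u_w \mapsto u_{w_0 w w_0}$ (or a similar reversal) and check using (\ref{act}) and the relation $D((w_0 w w_0)^{-1}) = \{n-i : i \in D(w^{-1})\}$ together with $\fd_{\bar\al} = w_0 \fd_\al w_0$ that this is an isomorphism ${}^{\bar\varphi}P_\al \cong P_{\bar\al}$; but the abstract argument above is cleaner and suffices.
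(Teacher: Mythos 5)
Your first isomorphism is computed exactly as in the paper: twist the action on the generator $\eta_\al$ and invoke the reversal identity for $D(\bar\al)$. For the projective part you diverge: the paper writes down the explicit basis map $u_w \mapsto u'_{w_0 w w_0}$ and verifies it against the module action \eqref{act}, using $\fd_{\bar\al} = w_0\fd_\al w_0$ and $i\in D(w)\Leftrightarrow n-i\in D(w_0ww_0)$; you instead observe that ${}^{\bar\varphi}(-)$ is an isomorphism of categories (being twist by an involutive algebra automorphism), hence carries indecomposable projectives to indecomposable projectives and commutes with taking heads, so ${}^{\bar\varphi}P_\al$ is the projective cover of ${}^{\bar\varphi}S_\al\cong S_{\bar\al}$ and the result follows from uniqueness of projective covers. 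Both arguments are correct. The categorical route is cleaner and makes clear that nothing beyond the simple-module case needs to be checked; the paper's hands-on route has the advantage of producing the explicit isomorphism on the standard bases, which can be convenient if one later needs to track specific basis vectors through the twist. You correctly flag the paper's approach as the hands-on alternative, so you have in effect covered both.
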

\begin{proof}
For the twisted module ${^{\bar{\varphi}}}S_\al$, we have
\[T_i._{\bar{\varphi}}\eta_\al=T_{n-i}.\eta_\al=
\begin{cases}
-\eta_\alpha,& i\in D(\bar{\alpha}),\\
0,&\mbox{otherwise},
\end{cases}\]
which implies the first isomorphism.

For the second one, we note that for any $w=w_1\cdots w_n\in\fs_n$,
\[w\in\fd_\al\Leftrightarrow w_i>w_{i+1},\,i\in D(\al)
\Leftrightarrow w_0ww_0(i)>w_0ww_0(i+1),\,i\in D(\bar{\al})
\Leftrightarrow w_0ww_0\in\fd_{\bar{\al}}.\]
That is, $w_0\fd_\al w_0=\fd_{\bar{\al}}$ and also $i\in D(w)\Leftrightarrow n-i\in D(w_0ww_0)$. Hence, by the module structure (\ref{act}) of $P_\al$, we know that for any $w\in\fd_\al$,
\[\begin{split}
T_i._{\bar{\varphi}}u_w&=T_{n-i}.u_w
=\begin{cases}
-u_w, &n-i\in D(w^{-1}),\\
u_{s_{n-i}w}, &n-i\notin D(w^{-1}), s_{n-i}w\in\fd_\al,\\
0,&\mbox{otherwise}.
\end{cases}\\
&=\begin{cases}
-u_w, &i\in D((w_0ww_0)^{-1}),\\
u_{s_{n-i}w}, &i\notin D((w_0ww_0)^{-1}), w_0s_{n-i}ww_0=s_iw_0ww_0\in\fd_{\bar{\al}},\\
0,&\mbox{otherwise}.
\end{cases}
\end{split}
\]
Hence, $u_w\mapsto u'_{w_0ww_0},\,w\in\fd_\al$ gives the second isomorphism, where $\{u'_w:w\in\fd_{\bar{\al}}\}$ is the standard basis of $P_{\bar{\al}}$.
\end{proof}

\section{From 0-Hecke-Clifford algebras to the peak algebra of symmetric groups}
In this section we inherit the known result in \cite{BHT} to clarify more explicitly the relation between $(\mb{Peak},\mb{Peak}^*)$ and the supermodule categories of 0-Hecke-Clifford algebras, especially the dual Hopf pair $(\widetilde{\mK},\,\widetilde{\mG})$ discussed in the previous section. Then we consider the corresponding Heisenberg double in order to prove the freeness of $\mb{Peak}^*$ over $\Om$.

\subsection{From $\widetilde{\mK}$ to $\mb{Peak}$}

First of all, we define the adjoint map
of the Frobenius isomorphism $\widetilde{\mb{Ch}}$ relating two non-degenerate pairs $[\cdot,\cdot]$ in (\ref{pp}) and $\lan\cdot,\cdot\ran$ in (\ref{pa1}). That is a Hopf isomorphism $\widetilde{\mb{Ch}}^*:\mb{Peak}\rightarrow\widetilde{\mK}$
satisfying
\[[F,\widetilde{\mb{Ch}}([M])]=\langle\widetilde{\mb{Ch}}^*(F),[M]\rangle,\,
F\in\mb{Peak},[M]\in\tilde{\mG}.\]

The following result provides the explicit form of $\widetilde{\mb{Ch}}^*$:
\begin{theorem}\label{adj}
For any composition $\al$, we have
\beq\label{prj}\widetilde{\mb{Ch}}^*(\Te(R_\al))=[\tilde{P}_\al].\eeq
Moreover, the following commutative diagram of Hopf algebras holds:
\[\xymatrix@=2em{
\mathcal{K}\ar@{->}[r]^-{\mbox{\scriptsize Ind}_{\mH}^{\mH\mC}}
&\widetilde{\mK}\ar@{->}[r]^-{\widetilde{\chi}}
&\widetilde{\mG}
\ar@{->}[d]^-{\widetilde{\mb{\scriptsize Ch}}}\\
\mbox{NSym}\ar@{->}[r]^-{\Theta}\ar@{->}[u]_-{\mbox{\scriptsize Ch}^*}
&\mbox{Peak}\ar@{->}[u]_-{\widetilde{\mb{\scriptsize Ch}}^*}\ar@{->}[r]^-{\pi}&\mb{Peak}^*}\]
where $\widetilde{\chi}:\widetilde{\mK}\rw\widetilde{\mG}$ is the Cartan map of 0-Hecke-Clifford algebras. In particular, $\mb{Im }\chi=\Om$.
\end{theorem}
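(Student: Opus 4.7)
The plan is to verify the key identity (\ref{prj}) by comparing its two sides under the adjoint correspondence, pairing them against the simple supermodules $[\tilde S_\be]$, and then to deduce the commutative diagram by splicing (\ref{prj}) with (\ref{dp}), (\ref{emb}), (\ref{co}) and Gessel's diagram for $\mH$. Since the pairings $[\cdot,\cdot]$ in (\ref{pp}) and $\lan\cdot,\cdot\ran$ in (\ref{pa1}) are nondegenerate and $\widetilde{\mb{Ch}}^*$ is the adjoint of the Frobenius isomorphism $\widetilde{\mb{Ch}}$, proving (\ref{prj}) reduces to checking
\[\lan[\tilde P_\al],[\tilde S_\be]\ran=[\Te(R_\al),K_{P(\be)}]\]
for every composition $\be$.

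For the right-hand side, applying (\ref{pro}) with $F=R_\al$ and $f=F_\be$ gives $[\Te(R_\al),K_{P(\be)}]=\lan R_\al,K_{P(\be)}\ran_{\mb{NSym},\mb{QSym}}$; substituting the expansion (\ref{ka}) of $K_{P(\be)}$ and using $\lan R_\al,F_\ga\ran=\de_{\al,\ga}$ yields $2^{|P(\be)|+1}$ if $P(\be)\subseteq D(\al)\triangle(D(\al)+1)$ and $0$ otherwise. For the left-hand side, Frobenius reciprocity for the embedding $\mH_n\hookrightarrow\mH\mC_n$ furnishes an even isomorphism
\[\mb{Hom}_{\mH\mC_n}(\mb{Ind}_{\mH_n}^{\mH\mC_n}P_\al,\tilde S_\be)\cong\mb{Hom}_{\mH_n}(P_\al,\mb{Res}_{\mH_n}^{\mH\mC_n}\tilde S_\be).\]
Since the simple $\mH_n$-modules are one-dimensional and $P_\al$ is the projective cover of $S_\al$, the total dimension of the right-hand space equals the multiplicity $[\mb{Res}_{\mH_n}^{\mH\mC_n}\tilde S_\be:S_\al]$, which by (\ref{emb}) is the coefficient of $F_\al$ in $\mb{Ch}([\mb{Res}\tilde S_\be])=K_{P(\be)}$, i.e.\ the same number. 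This proves (\ref{prj}).

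The left square of the big diagram now follows immediately from (\ref{prj}) together with $\mb{Ch}^*(R_\al)=[P_\al]$ and $\mb{Ind}_\mH^{\mH\mC}[P_\al]=[\tilde P_\al]$. For the right square, I would apply $\widetilde{\mb{Ch}}\circ\widetilde{\chi}$ to $[\tilde P_\al]$. Because $\mH\mC_n$ is free of rank $2^n$ as a right $\mH_n$-module (via the basis $\{c_DT_w\}$), induction is exact and commutes with the Cartan map, so by (\ref{dp}) and Gessel's diagram
\[\widetilde{\mb{Ch}}(\widetilde{\chi}([\tilde P_\al]))=\widetilde{\mb{Ch}}(\mb{Ind}_\mH^{\mH\mC}\chi([P_\al]))=\vt(\mb{Ch}(\chi([P_\al])))=\vt(r_\al),\]
where $r_\al=\pi(R_\al)\in\La$. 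The second square of (\ref{co}) shows $\vt|_\La=\te$, and the first shows $\pi\circ\Te=\te\circ\pi$ on NSym, so $\vt(r_\al)=\te(r_\al)=\pi(\Te(R_\al))$, which is precisely the image of $R_\al$ under the bottom row of the big diagram.

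Finally, the identity $\mb{Im }\chi=\Om$ is automatic: since $\Te$ is surjective and $\widetilde{\mb{Ch}}^*$ is an isomorphism, $\{[\tilde P_\al]\}$ spans $\widetilde{\mK}$, so $\widetilde{\mb{Ch}}(\mb{Im }\widetilde{\chi})$ is spanned by $\{\te(r_\al)\}$; the ribbon Schur functions span $\La$ and $\te:\La\twoheadrightarrow\Om$ is surjective (sending $h_n\mapsto q_n$), so this span is all of $\Om$. The one subtlety I expect in the proof is confirming that Frobenius reciprocity passes to total dimensions in the super setting, but this is automatic because the canonical $(\mb{Ind},\mb{Res})$-adjunction isomorphism is even (hence $\bZ_2$-graded), so all dimensional computations go through verbatim.
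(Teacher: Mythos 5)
Your proof is correct and follows the paper's strategy of proving $(\ref{prj})$ by pairing against the simple supermodules and then splicing with the previously established diagrams. For $(\ref{prj})$ the paper runs the equivalent chain $\lan\widetilde{\mb{Ch}}^*(\Te(R_\al)),[\tilde S_\be]\ran=[\Te(R_\al),K_{P(\be)}]=\lan R_\al,\mb{Ch}[\mb{Res}\tilde S_\be]\ran=\lan[P_\al],[\mb{Res}\tilde S_\be]\ran=\lan[\tilde P_\al],[\tilde S_\be]\ran$ purely abstractly via $(\ref{pro})$, $(\ref{emb})$, $(\ref{fro})$ and adjunction, without ever expanding $K_{P(\be)}$; your version is a little more explicit (you compute the common value $2^{|P(\be)|+1}$ or $0$), but the content is the same, and your remark that the adjunction isomorphism is even and hence preserves total dimensions is exactly the point that makes the super version go through. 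The genuine divergence is in the right square: the paper constructs a concrete $\tilde S$-filtration of $\tilde P_\al$ indexed by $\fd_\al$ (using Lemma $\ref{ord}$ and the module structure $(\ref{act})$) and then manipulates the resulting sum $\sum_{w\in\fd_\al}K_{P(w^{-1})}$ through Gessel's formula, whereas you observe instead that $\mb{Ind}_\mH^{\mH\mC}$ is exact (since $\mH\mC_n$ is free of rank $2^n$ over $\mH_n$) and therefore commutes with the Cartan maps, then push everything through $(\ref{dp})$, Gessel's square, and $(\ref{co})$. Your route is shorter and more functorial and buys you the identity $\widetilde\chi\circ\mb{Ind}=\mb{Ind}\circ\chi$ as a reusable fact, while the paper's filtration argument has the side benefit of exhibiting the $\tilde S$-composition factors of $\tilde P_\al$ explicitly (which is also essentially what underlies the corollary $(\ref{decp})$ that follows). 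Both are complete; your argument for $\mb{Im}\,\chi=\Om$ is the same as the paper's intended one.
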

\begin{proof}
Given compositions $\al,\be$, we have
\[\begin{split}
\lan\widetilde{\mb{Ch}}^*(\Te(R_\al)),[\tilde{S}_\be]\ran&=
[\Te(R_\al),\widetilde{\mb{Ch}}([\tilde{S}_\be])]
=\lan R_\al,\mb{Ch}\lb[\mb{Res}_{\mH}^{\mH\mC}\tilde{S}_\be]\rb\ran\\
&=\lan \mb{Ch}^*(R_\al),[\mb{Res}_{\mH}^{\mH\mC}\tilde{S}_\be]\ran
=\lan[P_\al],[\mb{Res}_{\mH}^{\mH\mC}\tilde{S}_\be]\ran\\
&=\lan[\mb{Ind}_{\mH}^{\mH\mC}P_\al],[\tilde{S}_\be]\ran
=\lan[\tilde{P}_\al],[\tilde{S}_\be]\ran,
\end{split}\]
where the second equality is due to (\ref{pro}) and (\ref{emb}), and
the second last one bases on the fact that induction functor is left adjoint to restriction. Since the pair $\lan\cdot,\cdot\ran$ is non-degenerate, we get the desired formula, equivalent to the left commutative square.

For the right commutative square, we only need to prove that
\[\widetilde{\mb{Ch}}\circ\widetilde{\chi}([\tilde{P}_\al])
=\pi\circ\Te(R_\al)\]
by formula (\ref{prj}). The module structure (\ref{act}) of $P_\al\,(\al\vDash n)$ and Lemma \ref{ord} imply that if fix a total order $\prec$ refining the Bruhat order on $\fd_\al$, then $\tilde{P}_\al$  has a super submodule filtration $\{\tilde{P}_\al^w\}_{w\in\fd_\al}$, where $\tilde{P}_\al^w:=\{c_Du_z:D\subseteq[n],z\in\fd_\al,w\preceq z\}$.
Also, the subquotient $\tilde{P}_\al^w/\sum_{w\prec z}\tilde{P}_\al^z\cong\tilde{S}_{c(w^{-1})}$ for any $w\in\fd_\al$.
Hence,
\[\begin{split}
\widetilde{\mb{Ch}}\circ\widetilde{\chi}([\tilde{P}_\al])
&=\sum_{w\in\fd_\al}\widetilde{\mb{Ch}}(\tilde{S}_{c(w^{-1})})
=\sum_{w\in\fd_\al}K_{P(w^{-1})}\\
&=\sum_\be|\{w\in\fs_n:w\in\fd_\al,\,w^{-1}\in\fd_\be\}|K_{P(\be)}
=\sum_\be\lan R_\be,r_\al\ran K_{P(\be)}\\
&=\sum_P\lan \sum_{P(\be)=P}R_\be,\pi(R_\al)\ran K_P
=\sum_P\lan\Xi_P,r_\al\ran K_P\stackrel{(\ref{pro})}{=}\sum_P[\Xi_P,\vt(r_\al)]K_P\\
&=\vt(r_\al)\stackrel{(\ref{co})}{=}\te\circ\pi(R_\al)
\stackrel{(\ref{co})}{=}\pi\circ\Te(R_\al),
\end{split}\]
where the fourth equality is due to the following well-known formula
of Gessel (see \cite[Prop. 5.9]{KT}):
\[\lan R_\be,r_\al\ran=\lan R_\al,r_\be\ran=
|\{w\in\fs_n:w\in\fd_\al,\,w^{-1}\in\fd_\be\}|.\]
\end{proof}

\begin{cor}
For any composition $\al$, we have the following decomposition formula:
\beq\label{decp}\tilde{P}_\al\cong\bigoplus_{P(\be)\subseteq D(\al)\triangle(D(\al)+1)}{HClP_{P(\be)}}^{\oplus 2^{l_\be}},\,l_\be:=\left\lfloor\tfrac{|P(\be)|+1}{2}\right\rfloor,\eeq
where we use $HClP_P$ to denote the projective cover of $HClS_P$ for any peak set $P$.
\end{cor}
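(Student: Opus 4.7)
The plan is to identify the multiplicity of each indecomposable projective $HClP_P$ in $\tilde P_\al$ by exploiting Theorem \ref{adj} together with the duality between the Frobenius maps $\widetilde{\mb{Ch}}$ and $\widetilde{\mb{Ch}}^*$, and then lift the resulting Grothendieck-group identity to a genuine module isomorphism via Krull--Schmidt. Since $\tilde P_\al$ is projective, it decomposes uniquely (up to order and parity shift) into indecomposable projective $\mH\mC_n$-supermodules, each of which is some $HClP_P$ or its parity shift, so it suffices to compute the multiplicity $m_P$ of $[HClP_P]$ in $[\tilde P_\al]\in\widetilde{\mK}$.

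By Theorem \ref{adj} we have $[\tilde P_\al]=\widetilde{\mb{Ch}}^*(\Te(R_\al))$. Pairing against $[\tilde S_\be]$ and using the defining adjointness of $\widetilde{\mb{Ch}}^*$ together with $\widetilde{\mb{Ch}}([\tilde S_\be])=K_{P(\be)}$ from (\ref{dp}), then applying (\ref{pro}) and the $F$-expansion (\ref{ka}) of $K_{P(\be)}$, I would compute
\[\lan[\tilde P_\al],[\tilde S_\be]\ran=[\Te(R_\al),K_{P(\be)}]=\lan R_\al,K_{P(\be)}\ran,\]
which evaluates to $2^{|P(\be)|+1}$ when $P(\be)\subseteq D(\al)\triangle(D(\al)+1)$ and to $0$ otherwise.

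On the other hand, writing $[\tilde P_\al]=\sum_{P} m_{P}[HClP_{P}]$ in $\widetilde{\mK}$, using $[\tilde S_\be]=2^{l_\be}[HClS_{P(\be)}]$ from (\ref{dec}), and combining with (\ref{eva}) and the type classification of Proposition \ref{type} gives
\[\lan[\tilde P_\al],[\tilde S_\be]\ran=2^{l_\be}\,m_{P(\be)}\,d_{P(\be)},\]
where $d_P=1$ when $|P|$ is odd (type M) and $d_P=2$ when $|P|$ is even (type Q). Equating the two expressions forces $m_P=2^{|P|+1-l_P}/d_P$ when $P\subseteq D(\al)\triangle(D(\al)+1)$ and $m_P=0$ otherwise. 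The only delicate point is that these two parity cases collapse to a common answer: using $l_P=\lfloor(|P|+1)/2\rfloor$, a short arithmetic check gives $m_P=2^{l_P}$ uniformly in both the type M and type Q cases. Invoking Krull--Schmidt on $\mH\mC_n\mb{-pmod}$ (noting that $\widetilde{\mK}$ identifies $[HClP_P]$ with $[\Pi HClP_P]$) then promotes this Grothendieck-group identity to the asserted module decomposition.
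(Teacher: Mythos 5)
Your proof is correct and follows essentially the same route as the paper: both compute $\lan[\tilde P_\al],[\tilde S_\be]\ran$ via Theorem~\ref{adj} and then extract multiplicities from (\ref{eva}), (\ref{dec}) and Proposition~\ref{type}. The only cosmetic difference is that the paper evaluates the pairing by expanding $\Te(R_\al)$ in the $\Xi_P$-basis via (\ref{rx}) and then using $[\Xi_P,K_Q]=\de_{P,Q}$, whereas you expand $K_{P(\be)}$ in the $F$-basis via (\ref{ka}) and pair against $R_\al$; you also spell out the parity arithmetic showing $m_P=2^{l_P}$ in both type M and type Q cases and the Krull--Schmidt lift, both of which the paper leaves implicit.
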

\begin{proof}
From \cite[(6)]{BHT} we know that
\beq\label{rx}\Te(R_\al)=\sum_{P\subseteq D(\al)\triangle(D(\al)+1)}2^{|P|+1}\Xi_P.\eeq
Now by Theorem \ref{adj},
\[\begin{split}
\lan[\tilde{P}_\al],[\tilde{S}_\be]\ran&=
[\Te(R_\al),\widetilde{\mb{Ch}}([\tilde{S}_\be])]
=[\Te(R_\al),K_{P(\be)}]\\
&=\begin{cases}
2^{|P(\be)|+1},&\mb{if }P(\be)\subseteq D(\al)\triangle(D(\al)+1),\\
0,& \mb{otherwise}.
\end{cases}
\end{split}\]
Combining it with (\ref{eva}), (\ref{dec}) and Prop. \ref{type}, we get the desired decomposition of $\tilde{P}_\al$.
\end{proof}
\noindent
In particular, the generator $Q_n=\Te(R_{(n)})$ corresponds to the projective simple supermodule $[\tilde{P}_{(n)}]=[\tilde{S}_{(n)}]=[HClS_{\emp_n}]=[HClP_{\emp_n}]$ via $\widetilde{\mb{Ch}}^*$ and
\[\lan[\tilde{P}_{(n)}],[\tilde{S}_\be]\ran=2\de_{P(\be),\emp},\]
also due to Schur's Lemma as the simple supermodule $HClS_{\emp_n}$ is of type Q. Note that in the classical case, $q_n\in\Om$ corresponds to the \textit{basic spin module} $Cl_n$ of the Sergeev algebra $Cl_n\rtimes\bk\fs_n$ under the Frobenius isomorphism \cite[\S 3.3]{CW}.

Now we abuse the notation to denote by $\bar{\varphi}$ the Hopf algebra anti-involution of NSym such that
$\bar{\varphi}(H_n)=H_n\,(n\in\bN)$. Note that $\bar{\varphi}(R_\al)=R_{\bar{\al}}$.
Meanwhile, since $\bar{\varphi}(\mb{Ker }\Te)=\mb{Ker }\Te$, $\bar{\varphi}$ induces a Hopf algebra anti-involution of Peak, which we abuse to denote by $\varphi$, then
$\varphi\circ\Te=\Te\circ\bar{\varphi}$.

We are in the position to prove the following restriction rule.
\begin{theorem}
For any composition $\al$, the following commutative diagram of Hopf algebras holds:
\[\xymatrix@=2em{
\widetilde{\mK}\ar@{->}[r]^-{\mb{\scriptsize Res}_\mH^{\mH\mC}}&\mK\\
\mb{Peak}\ar@{->}[r]^-{\bar{\varphi}\circ i\circ\varphi}\ar@{->}[u]_-{\widetilde{\mb{\scriptsize Ch}}^*}
&\mb{NSym}\ar@{->}[u]_-{\mb{\scriptsize Ch}^*}}\,\mb{\quad or\quad}\xymatrix@=2em{
\widetilde{\mK}\ar@{->}[r]^-{{^{\bar{\varphi}}}\mb{\scriptsize Res}_\mH^{\mH\mC}}&\mK\\
\mb{Peak}\ar@{->}[r]^-{i\circ\varphi}
\ar@{->}[u]_-{\widetilde{\mb{\scriptsize Ch}}^*}
&\mb{NSym}\ar@{->}[u]_-{\mb{\scriptsize Ch}^*}}\]
where $i:\mb{Peak}\rw\mb{NSym}$ is the natural inclusion. Equivalently, for any composition $\al$,
\beq\label{re}[\mb{Res}_\mH^{\mH\mC}\tilde{P}_{\al}]=\sum_{P(\be)\subseteq D(\bar{\al})\triangle(D(\bar{\al})+1)}2^{|P(\be)|+1}[P_{\bar{\be}}].\eeq
\end{theorem}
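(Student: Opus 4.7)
The plan is to establish the equivalent formula (\ref{re}); both forms of the diagram then follow. First I would translate the diagram into (\ref{re}): using the intertwining $\varphi\circ\Te=\Te\circ\bar\varphi$, the identity $\bar\varphi(\Xi_P)=\sum_{P(\be)=P}R_{\bar\be}$ (derived from $\Xi_P=\sum_{P(\be)=P}R_\be$ and $\bar\varphi(R_\ga)=R_{\bar\ga}$), and formula (\ref{rx}) applied to $\bar\al$ in place of $\al$, I compute
\[
\bar\varphi(i(\varphi(\Te(R_\al))))=\bar\varphi(\Te(R_{\bar\al}))=\sum_{P(\be)\subseteq D(\bar\al)\triangle(D(\bar\al)+1)}2^{|P(\be)|+1}R_{\bar\be},
\]
whose image under $\mb{Ch}^*$ is exactly the RHS of (\ref{re}). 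The equivalence with the second version of the diagram follows from Prop. \ref{na}: since ${^{\bar\varphi}}P_\be\cong P_{\bar\be}$, the coefficient of $[P_\be]$ in $[{^{\bar\varphi}}\mb{Res}\,\tilde{P}_\al]$ matches the coefficient of $[P_{\bar\be}]$ in $[\mb{Res}\,\tilde{P}_\al]$.

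To prove (\ref{re}), I would first note that $\mH\mC_n$ is free as a left $\mH_n$-module (via $\mH\mC_n=\bigoplus_{D\subseteq[n]}\mH_n c_D$, each summand isomorphic to $\mH_n$). Consequently $\mb{Res}_\mH^{\mH\mC}\tilde{P}_\al$, being a direct summand of a free $\mH_n$-module, is projective over $\mH_n$. Since $\mH_n$ is a basic Frobenius algebra with Nakayama automorphism $\bar\varphi$ and ${^{\bar\varphi}}S_\al\cong S_{\bar\al}$ by Prop. \ref{na}, one has $\mb{soc}(P_\al)\cong S_{\bar\al}$, so the multiplicity of $P_{\bar\be}$ in any finitely generated projective $\mH_n$-module $P$ equals $\dim\mb{Hom}_{\mH_n}(S_\be,P)$. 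Applying this together with the standard Frobenius reciprocity (Ind left adjoint to Res) gives
\[
[\mb{Res}_\mH^{\mH\mC}\tilde{P}_\al:P_{\bar\be}]=\dim\mb{Hom}_{\mH_n}(S_\be,\mb{Res}_\mH^{\mH\mC}\tilde{P}_\al)=\dim\mb{Hom}_{\mH\mC_n}(\tilde{S}_\be,\tilde{P}_\al),
\]
reducing the problem to the computation of this last Hom dimension.

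To evaluate $\dim\mb{Hom}_{\mH\mC_n}(\tilde{S}_\be,\tilde{P}_\al)$, I would use the decompositions (\ref{dec}) and (\ref{decp}). The Nakayama automorphism $\varphi$ of the Frobenius superalgebra $\mH\mC_n$ sends $HClS_{P(\ga)}$ to $HClS_{P(\ga^*)}$, and writing $Q^*:=P(\ga^*)=(n+1)-Q$ for $Q=P(\ga)$, one obtains $\mb{soc}(HClP_Q)\cong HClS_{Q^*}$, hence
\[
\dim\mb{Hom}_{\mH\mC_n}(HClS_P,HClP_Q)=\de_{P,Q^*}\dim\mb{End}_{\mH\mC_n}(HClS_P)=\de_{P,Q^*}(1+[|P|\text{ even}]),
\]
where the parity factor arises from Prop. \ref{type} and (\ref{eva}). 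Expanding via the two decompositions and using $l_{P(\be)^*}=l_\be$ (since $|P(\be)^*|=|P(\be)|$), one obtains
\[
\dim\mb{Hom}_{\mH\mC_n}(\tilde{S}_\be,\tilde{P}_\al)=2^{2l_\be}(1+[|P(\be)|\text{ even}])=2^{|P(\be)|+1}
\]
whenever $P(\be)^*\subseteq D(\al)\triangle(D(\al)+1)$, and $0$ otherwise; the exponent merges uniformly across both parities via $l_\be=\lfloor(|P(\be)|+1)/2\rfloor$.

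Finally, the combinatorial equivalence
\[
P(\be)^*\subseteq D(\al)\triangle(D(\al)+1)\iff P(\be)\subseteq D(\bar\al)\triangle(D(\bar\al)+1)
\]
follows from the reflection identities $D(\bar\al)+1=(n+1)-D(\al)$ and $D(\bar\al)=(n+1)-(D(\al)+1)$, together with $P(\be)^*=(n+1)-P(\be)$, which together give $(n+1)-[D(\al)\triangle(D(\al)+1)]=D(\bar\al)\triangle(D(\bar\al)+1)$. Combining all the steps yields (\ref{re}). The most delicate point is the Hom computation in the Frobenius superalgebra $\mH\mC_n$, where one must track the type M/Q dichotomy for simple supermodules and verify that the parity factor $(1+[|P(\be)|\text{ even}])$ combines with the doubling $2^{l_\be}$ from (\ref{dec}) to produce the uniform exponent $|P(\be)|+1$ in both parities.
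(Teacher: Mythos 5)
Your proposal is correct, and it takes a genuinely different route from the paper's. The paper exploits that $\mb{Res}_\mH^{\mH\mC}$ is a Hopf algebra map, so it only verifies the formula on the generators $Q_n$ of Peak: it does this by an explicit construction, exhibiting vectors $v_{n,k}\in\tilde{P}_{(n)}$ (alternating sums of basis elements $c_D\eta_{(n)}$ indexed by a certain poset of subsets $D\subseteq[n]$) and checking directly that $\mH_n.v_{n,k}\cong P_{(n-k,1^k)}$ and that these exhaust the two graded components of $\tilde{P}_{(n)}$; the general case then follows by functoriality and formula (\ref{rx}). You instead avoid explicit vectors entirely: you reduce the multiplicity $[\mb{Res}\,\tilde{P}_\al:P_{\bar\be}]$ to $\dim\mb{Hom}_{\mH\mC_n}(\tilde{S}_\be,\tilde{P}_\al)$ via the socle description $\mb{soc}(P_\gamma)\cong S_{\bar\gamma}$ of the Frobenius algebra $\mH_n$ plus Frobenius reciprocity, and then evaluate the latter by plugging in the two already-established decompositions (\ref{dec}) and (\ref{decp}) together with super Schur's lemma and the Nakayama permutation $Q\mapsto Q^*$ of $\mH\mC_n$, finally translating the peak-set condition by the reflection identity $(n+1)-[D(\al)\triangle(D(\al)+1)]=D(\bar\al)\triangle(D(\bar\al)+1)$. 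Your route is shorter and more conceptual, and in effect recycles the Corollary (\ref{decp}) instead of re-deriving an ad hoc decomposition, at the price of invoking two socle/Nakayama-permutation facts (for $\mH_n$ and for $\mH\mC_n$) that the paper does not state in that form — the first is standard for 0-Hecke algebras, and the second follows from $^\varphi(HClS_{P(\al)})\cong HClS_{P(\al^*)}$ proved earlier in the paper together with the general relation between the Nakayama automorphism and the socle of an indecomposable projective over a Frobenius (super)algebra; it would be worth stating and justifying those two facts explicitly to make the argument self-contained.
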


\begin{proof}
By Prop. \ref{na}, the above two diagrams are equivalent.
Since $\mb{Res}_\mH^{\mH\mC}:\widetilde{\mK}\rw\mK$
is easily checked to be a Hopf algebra homomorphism, using formula (\ref{rx}) we only need to prove for the generators $Q_n\,(n\in\bN)$ that \beq\label{rn}[\mb{Res}_\mH^{\mH\mC}\tilde{P}_{(n)}]
=\mb{Ch}^*(\bar{\varphi}\circ i\circ\varphi(Q_n))
\stackrel{(\ref{gen})}{=}2\sum_{P(\be)=\emp_n}[P_{\bar{\be}}]
=2\sum_{k=0}^{n-1}[P_{(n-k,1^k)}],\,n\in\bN.\eeq
For $\tilde{P}_{(n)}=\tilde{S}_{(n)}$,
$v_D:=c_D\eta_{(n)}\,(D\subseteq[n])$ form a basis. By (\ref{com})
\[T_i.v_D=\begin{cases}
-v_D+v_{D\backslash\{i,i+1\}},&\mb{if }i,i+1\in D,\\
-v_D+v_{(D\backslash\{i+1\})\cup\{i\}},&\mb{if }i\notin D,i+1\in D,\\
0,&\mb{otherwise}.
\end{cases}\]
Define a partial order $\lhd$ on $2^{[n]}$ such that
$D$ covers $D'$ if $i,i+1\notin D'$ and $D=D'\cup\{i,i+1\}$, or
$i\in D',i+1\notin D'$ and $D=(D'\backslash\{i\})\cup\{i+1\}$ for some $i\in[n-1]$. We denote such covering relation by $D'\lhd_i D$. For any $0\leq k\leq n-1$, take $D_{n,k}\subseteq[n]$ to be one of the two sets $\{n-k+1,\dots,n\}\backslash\{1\}$ and $\{1\}\cup\{n-k+1,\dots,n\}$ with odd cardinality. Define
\[v_{n,k}:=v_{D_{n,k}}+\sum_D\ep_Dv_D\in\tilde{P}_{(n)},\]
where the sum is over those $D\lhd D_{n,k}$ such that $D\lhd_{i_1}\cdots\lhd_{i_r}D_{n,k}$ for some $i_1,\dots,i_r\in[n-k,n-1]$, and $\ep_D$ is the sign of length of any chain in $2^{[n]}$ from $D$ to $D_{n,k}$. Then
\[T_i.v_{n,k}=\begin{cases}
0,&\mb{if }i\in\{1,\dots,n-k-2\},\\
-v_{n,k},&\mb{if }i\in\{n-k,\dots,n-1\}.
\end{cases}\]
Meanwhile, $\fd_{(n-k,1^k)}=\{w_1\cdots w_n\in\fs_n:
w_1<\dots<w_{n-k}>w_{n-k+1}>\dots>w_n\}$ for $0\leq k\leq n-1$,
and the projective $\mH_n$-module $P_{(n-k,1^k)}$ is generated by $u_{1\cdots(n-k-1)n(n-1)\cdots(n-k)}$.
Now one can check that the homogeneous component $(\tilde{P}_{(n)})_{\bar{1}}=\bigoplus_{k=0}^{n-1}\mH_n.v_{n,k}$ and the isomorphism
$\mH_n.v_{n,k}\cong P_{(n-k,1^k)},\,v_{n,k}\mapsto u_{1\cdots(n-k-1)n(n-1)\cdots(n-k)}$ holds. Define $D'_{n,k}$ analogous to $D_{n,k}$ but with even cardinality and then $v'_{n,k}$ analogous to $v_{n,k}$. Then $(\tilde{P}_{(n)})_{\bar{0}}=\bigoplus_{k=0}^{n-1}\mH_n.v'_{n,k}$ and
$\mH_n.v'_{n,k}\cong P_{(n-k,1^k)}$ as in the odd case. Finally, we prove formula (\ref{rn}) and thus the commutative diagrams.

In general, by formula (\ref{rx})
\[\Te(R_\al)=\sum_{P(\be)\subseteq D(\al)\triangle(D(\al)+1)}2^{|P(\be)|+1}R_\be.\]
Then by (\ref{prj}) and the commutative diagram, we get the desired restriction rule (\ref{re}).
\end{proof}

\begin{exam}
For $n=5,k=2$, the diagram of $P_{(3,1^2)}$ is as follows.
\[\xymatrix@R=0.5em{
\raisebox{1.5em}{$\stackrel{\quad\rc -T_3,-T_4}{u_{12543}}$}\ar@{->}[r]^-{T_2}&
\raisebox{1.5em}{$\stackrel{\quad\rc -T_2,-T_4}{u_{13542}}$}\ar@{->}[r]^-{T_1}
\ar@{->}[dr]^-{T_3}&\raisebox{1.5em}{$\stackrel{\quad\rc -T_1,-T_4}{u_{23541}}$}
\ar@{->}[r]^-{T_3}&\raisebox{1.5em}{$\stackrel{\quad\rc -T_1,-T_3}{u_{24531}}$}\ar@{->}[r]^-{T_2}&
\raisebox{1.5em}{$\stackrel{\quad\rc -T_1,-T_2}{u_{34521}}$}\\
&&\quad\quad {u_{14532}}_{\rc -T_2,-T_3}\ar@{->}[ur]^-{T_1}&&}
\]
Now $D_{5,2}=\{1,4,5\}$ and
$v_{5,2}=v_{\{1,4,5\}}-v_{\{1,3,5\}}-v_{\{1\}}+v_{\{1,3,4\}}$.
It is  straightforward to check the isomorphism $\mH_5.v_{5,2}\cong P_{(3,1^2)},\,v_{5,2}\mapsto u_{12543}$.
\end{exam}

Next we give another kind of restriction rule for the induced projective modules. Identifying $\mH_{n-1}$ (resp. $\mH\mC_{n-1}$) with $\mH_{n-1,1}$ (resp. $\mH\mC_{n-1,1}$), we have the embedding  $\mu_n:\mH_{n-1}\rw\mH_n$ (resp. $\tilde{\mu}_n:\mH\mC_{n-1}\rw\mH\mC_n$)  defined from $\mu_{n-1,1}$ (resp. $\tilde{\mu}_{n-1,1}$).
\begin{theorem}
For any $\al=(\al_1,\dots,\al_r)\vDash n$, we have
\beq\label{re1}
{^{\tilde{\mu}_n}\tilde{P}_\al}\cong\lb\bigoplus_{1\leq i\leq r\atop \al_i>1}
\tilde{P}_{\al_{(i)}}\rb^{\oplus2}
\oplus\lb\bigoplus_{1\leq i\leq r-1\atop \al_i>1}
\tilde{P}_{\al'_{(i)}}\rb^{\oplus2}
\oplus{\tilde{P}_{(\al_2,\dots,\al_r)}}{^{\oplus2\de_{\al_1,1}}},
\eeq
where $\al_{(i)}:=(\al_1,\dots,\al_{i-1},\al_i-1,\al_{i+1},\dots,\al_r),\,
\al'_{(i)}=(\al_1,\dots,\al_{i-1},\al_i+\al_{i+1}-1,\al_{i+2},\dots,\al_r)$.
\end{theorem}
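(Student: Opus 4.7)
The plan is to peel off the dependence on the extra Clifford generator $c_n$, reduce the problem to a restriction rule inside the 0-Hecke tower, and then invoke the coproduct of ribbon Schur functions in $\mb{NSym}$.

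First I would observe that, since $\tilde{P}_\al=\mb{Ind}_{\mH_n}^{\mH\mC_n}P_\al$ admits the basis $\{c_Du_w\,:\,D\subseteq[n],\,w\in\fd_\al\}$, splitting according to whether $n\in D$ gives a vector-space decomposition $\tilde{P}_\al=\tilde{P}_\al^{(0)}\oplus c_n\tilde{P}_\al^{(0)}$ with $\tilde{P}_\al^{(0)}:=\mb{span}\{c_Du_w\,:\,D\subseteq[n-1]\}$. Using (\ref{com}) with $i<n-1$ one checks that $\tilde{P}_\al^{(0)}$ is stable under the $\mH\mC_{n-1}$-action, and the map $v\mapsto c_nv$ intertwines the action on $\Pi\tilde{P}_\al^{(0)}$ with the one on $c_n\tilde{P}_\al^{(0)}$ (the sign coming from $c_jc_n=-c_nc_j$ for $j<n$ matches the parity twist). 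Hence $^{\tilde\mu_n}\tilde{P}_\al\cong \tilde{P}_\al^{(0)}\oplus\Pi\tilde{P}_\al^{(0)}$ as $\mH\mC_{n-1}$-supermodules, which accounts for the uniform factor of $2$ in (\ref{re1}).

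Next I would identify $\tilde{P}_\al^{(0)}$ intrinsically: the natural map $c_D\ot u_w\mapsto c_Du_w$ for $D\subseteq[n-1]$ and $w\in\fd_\al$ is an isomorphism
\[
\tilde{P}_\al^{(0)}\cong \mb{Ind}_{\mH_{n-1}}^{\mH\mC_{n-1}}\!\lb\mb{Res}_{\mH_{n-1}}^{\mH_n}P_\al\rb
\]
of $\mH\mC_{n-1}$-supermodules, since (\ref{com}) lets any $T_i$ ($i<n-1$) be transported across $c_D$ before it acts on $u_w$ via the restricted $\mH_{n-1}$-action. Combined with the previous paragraph this reduces the theorem to decomposing $\mb{Res}_{\mH_{n-1}}^{\mH_n}P_\al$ as an $\mH_{n-1}$-module and then inducing.

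For this restriction, since $\mH_n$ is free over $\mH_{n-1}$, the module $\mb{Res}_{\mH_{n-1}}^{\mH_n}P_\al$ is again projective, so by Krull--Schmidt it suffices to compute its class in $\mK=\mK(\mH)$. Via the Frobenius isomorphism $\mb{Ch}^*$ of (\ref{fro}), this amounts to extracting the $\ot R_{(1)}$-component of $\De(R_\al)$ in $\mb{NSym}$. A direct application of $\De(H_n)=\sum_k H_k\ot H_{n-k}$ yields
\[
\De(H_\al)|_{\ot H_1}=\sum_{i=1}^{r}H_{\al^{(i)}}\ot H_1,
\]
where $\al^{(i)}$ is $\al$ with $\al_i$ decreased by $1$ and any resulting zero deleted. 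Plugging this into $R_\al=\sum_{\be\geq\al}(-1)^{\ell(\be)-\ell(\al)}H_\be$, converting back to the $R$-basis, and carefully collapsing the alternating sum produces precisely the three families appearing in (\ref{re1}); inducing each summand to $\mH\mC_{n-1}$ and doubling via the first paragraph finishes the proof.

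The main obstacle I anticipate is the last bookkeeping step: showing that, after passing from $H_\be$'s back to $R_\be$'s, all contributions in which some $\al_i=1$ with $i\geq 2$ collapse correctly, so that the second sum inherits the restriction $\al_i>1$ while the boundary term $P_{(\al_2,\dots,\al_r)}$ survives only when $\al_1=1$. A purely module-theoretic alternative would be to construct an explicit filtration of $\tilde{P}_\al^{(0)}$ indexed by $\fd_\al$ in the spirit of Lemma \ref{ord}, but the coproduct route seems cleaner and fits the Frobenius-style exposition used throughout the paper.
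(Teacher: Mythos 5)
Your first reduction (peeling off $c_n$ and identifying $^{\tilde\mu_n}\tilde P_\al\cong\bigl(\mb{Ind}_{\mH_{n-1}}^{\mH\mC_{n-1}}\mb{Res}_{\mH_{n-1}}^{\mH_n}P_\al\bigr)^{\oplus2}$) is essentially the same as the paper's, which states the same thing as an isomorphism of functors ${^{\tilde\mu_n}(\mH\mC_n)}\ot_{\mH_n}-\cong\bigl((\mH\mC_{n-1})\ot_{\mH_{n-1}}{^{\mu_n}(\mH_n)}\ot_{\mH_n}-\bigr)^{\oplus2}$; your version is slightly more scrupulous about the $\Pi$-shift on the second copy, but since both sides are in the non-even supermodule category this is immaterial. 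After that reduction you diverge from the paper. The paper decomposes ${^{\mu_n}P_\al}$ directly as an $\mH_{n-1}$-module by partitioning the standard basis $\{u_w:w\in\fd_\al\}$ according to the position of $n$ in $w$ and exhibiting the summands $P_{\al_{(i)}}$, $P_{\al'_{(i)}}$, $P_{(\al_2,\dots,\al_r)}$ (using a filtration plus projectivity to split). You instead argue that $\mb{Res}_{\mH_{n-1}}^{\mH_n}P_\al$ is projective (free restriction), invoke Krull--Schmidt, and reduce to a Grothendieck-ring identity in $\mb{NSym}$ via $\mb{Ch}^*$ from \eqref{fro}: compute the $(n-1,1)$-component of $\De(R_\al)$ and read off the $R$-coefficients. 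This route is valid and does give the same three families; I verified it for several small $\al$ (e.g. $\al=(2,1),(3,1)$ both give $R_{\al_{(r)}}+R_{\al'_{(1)}}$, etc.).

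The one real gap is that your proof stops exactly at the "careful collapsing" you flag: the passage from $\De(H_\al)\vert_{(n-1,1)}=\sum_i H_{\al^{(i)}}\ot H_1$ through $R_\al=\sum_{\be\geq\al}(-1)^{\ell(\be)-\ell(\al)}H_\be$ to the $R$-basis is genuinely more delicate than the proposal suggests, and is not carried out. The cleaner way to finish along your line is the dual one: compute $c^\al_{\ga,(1)}$ as the coefficient of $F_\al$ in $F_\ga F_{(1)}$ via the shuffle description, i.e. insert the largest letter $n$ into a representative $u\in\fd_\ga$ at each of the $n$ slots and read off the descent set of the resulting word. Then one sees that an insertion at slot $j$ can produce $\al$ only when $j\in D(\al)\cup\{n\}$ with $j-1\notin D(\al)$, and that slot $j=k_i\in D(\al)$ contributes \emph{two} sources $\ga=\al_{(i)}$ and $\ga=\al'_{(i)}$ (whether or not $j-1\in D(u)$ is a free choice), $j=n$ contributes $\al_{(r)}$ when $\al_r>1$, and $j=1$ contributes $(\al_2,\dots,\al_r)$ when $\al_1=1$. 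This is precisely the three families in \eqref{re1}, and in fact is just a disguised version of the paper's own partitioning of $\fd_\al$ by the position of $n$. So your route works, but either you should actually carry out the $\mb{NSym}$ bookkeeping, or switch to the shuffle/descent-set computation, in which case you are doing the paper's argument in a dual language. The advantage of your framing is that, once the paper's Frobenius machinery and projectivity of the restriction are in place, it turns the module decomposition into a purely formal Hopf-algebra identity; the disadvantage is that the key combinatorics has only been relocated, not removed.
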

\begin{proof}
First note that we have the following isomorphism of functors on $\mH_n$-mod.
\[\begin{array}{l}
{^{\tilde{\mu}_n}(\mH\mC_n)^{\iota_n}}\ot_{\mH_n}\mb{--}
\cong\lb(\mH\mC_{n-1})^{\iota_{n-1}}\ot_{\mH_{n-1}}{^{\mu_n}}
(\mH_n)\ot_{\mH_n}\mb{--}\rb^{\oplus 2}\\
c_DT_w\ot\mb{--}\mapsto (c_D\ot T_w\ot\mb{--},0),\,
c_Dc_nT_w\ot\mb{--}\mapsto (0,c_D\ot T_w\ot\mb{--})
\end{array}
\]
for any $D\subseteq[n-1],w\in\fs_n$. In particular,
\[{^{\tilde{\mu}_n}\tilde{P}_\al}
={^{\tilde{\mu}_n}(\mH\mC_n)^{\iota_n}}\ot_{\mH_n}P_\al
\cong\lb(\mH\mC_{n-1})^{\iota_{n-1}}\ot_{\mH_{n-1}}
{^{\mu_n}P_\al}\rb^{\oplus 2},\]
which reduces formula (\ref{re1}) to
\[{^{\mu_n}P_\al}\cong\lb\bigoplus_{1\leq i\leq r\atop \al_i>1}
P_{\al_{(i)}}\rb
\oplus\lb\bigoplus_{1\leq i\leq r-1\atop \al_i>1}
P_{\al'_{(i)}}\rb
\oplus{P_{(\al_2,\dots,\al_r)}}^{\oplus\de_{\al_1,1}}.\]
For example, ${^{\mu_5}P_{(1,2,2)}}\cong P_{(2,2)}\oplus P_{(1^2,2)}
\oplus P_{(1,3)}\oplus P_{(1,2,1)}$.

Recall that $P_\al$ has basis $\{u_w:w\in\fd_\al\}$ with module action (\ref{act}). Let $k_i:=\al_1+\cdots+\al_i,\,i=1,\dots,r$.
It is easy to see that the $j$'s such that $w_j=n$ for some $w=w_1\cdots w_n\in\fd_\al$ are those satisfying $j=k_i$ with $i=1$ or $1<i\leq r,\al_i>1$. Now for any such $j=k_i$, denote $\fd_\al^{(i)}:=\{w\in\fd_\al:w_j=n\}$ and $P^{(i)}_\al:=\mb{span}_\bk\{u_w:w\in\fd_\al^{(i)}\}$. We need to deal with three cases.

If $1<j=k_i<n,\al_i>1$, then $\{u_w:w\in\fd^{(i)}_\al,w_{j-1}>w_{j+1}\}$ spans the projective $\mH_{n-1}$-module $P_{\al_{(i)}}$. Modulo these vectors, the quotient space of $P^{(i)}_\al$ spanned by $\{u_w:w\in\fd^{(i)}_\al,w_{j-1}<w_{j+1}\}$ is isomorphic to another projective $\mH_{n-1}$-module $P_{\al'_{(i)}}$. If $j=k_1=\al_1=1$, then $P^{(1)}_\al$ spans $P_{(\al_2,\dots,\al_r)}$. If $j=k_r=n,\al_r>1$, then $P^{(r)}_\al\cong P_{\al_{(r)}}:=P_{(\al_1,\dots,\al_{r-1},\al_r-1)}$. In summary, we get the desired formula.
\end{proof}

\subsection{Application: $\mb{Peak}^*$ is free over $\Om$}
Finally we consider the Heisenberg double arising from $(\widetilde{\mK},\widetilde{\mG})$ in order to prove that $\mb{Peak}^*$ is a free $\Om$-module using the method of Savage et al. in \cite{SY}. In general, given a graded Hopf pair $(\mK(A),\mG(A))$, there exists a left action of $\mK(A)$ on $\mG(A)$ such that $\mG(A)$ is a $\mK(A)$-module algebra. It is defined by
\beq\label{ac}[P].[M]:=\begin{cases}\left[\mb{Hom}_{A_i}\lb P,{^{\ga_{n-i,i}}}\mb{Res}_{A_{n-i}\ot A_i}^{A_n}M\rb\right],
&P\in A_i\mb{-pmod},\,M\in A_n\mb{-mod},\,i\leq n,\\
0,&\mb{otherwise},
\end{cases}\eeq
where $\ga_{n-i,i}:A_i\rw A_{n-i}\ot A_i,\,a\mapsto1_{n-i}\ot a$ is the natural embedding. Note that for the Cartan map $\chi$, $\mG_{\mb{\scriptsize proj}}(A):=\mb{Im }\chi$ is stable under such action, thus a submodule of $\mG(A)$. Now one can define the following two kinds of \textit{Heisenberg doubles}:
\beq\label{hd}\fh(A):=\mG(A)\#\mK(A),\,
\fh_{\bs{proj}}(A):=\mG_{\bs{proj}}(A)\#\mK(A).\eeq
The notation $\#$ means smash product construction on $H\ot B$ from a Hopf algebra $H$ and an $H$-module algebra $B$.

Let
\[H^-:=\mK(A),\,H^+:=\mG(A),\,
H^+_{\bs{proj}}:=\mG_{\bs{proj}}(A),\,\fh:=\fh(A),\,
\fh_{\bs{proj}}:=\fh_{\bs{proj}}(A).\]
Then $H^+$ becomes a left $\fh$-module, called the \textit{lowest weight Fock representation}, where $H^+$ acts by left multiplication and $H^-$ acts by formula (\ref{ac}). For any $\fh$-module $V$, $v\in V$ is called a \textit{lowest weight vacuum vector} if $H^-v=0$.


From now on, we focus on the case when the tower $A=\mH\mC$.
\begin{lem}\label{lwv}
Suppose $V$ is an $\fh_{\bs{proj}}$-module generated by a finite set of lowest weight vacuum vectors. Then $V$ is a direct sum of lowest weight Fock representations.
\end{lem}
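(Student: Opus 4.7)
The plan is to prove a Stone--von Neumann type theorem for $\fh_{\bs{proj}}$: every module generated by lowest weight vacuum vectors decomposes as a direct sum of copies of the Fock representation $H^+_{\bs{proj}}$. Concretely, I would define the \emph{vacuum subspace}
\[V^{\mb{vac}}:=\{v\in V\,:\,yv=0,\,\forall y\in H^-_{>0}\},\]
where $H^-_{>0}=\bigoplus_{n\geq1}K_0(\mH\mC_n\mb{-pmod})$ is the augmentation ideal. The given generators lie in $V^{\mb{vac}}$, so $V^{\mb{vac}}\neq0$. The key claim is that the natural multiplication map
\[\mu:H^+_{\bs{proj}}\ot V^{\mb{vac}}\rw V,\quad x\ot v\mapsto xv\]
is an isomorphism of $\fh_{\bs{proj}}$-modules, where the $H^-$-action on the left is inherited from the smash product relations together with the vacuum condition. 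Once this is established, choosing any basis $\{w_\la\}$ of $V^{\mb{vac}}$ exhibits $V\cong\bigoplus_\la H^+_{\bs{proj}}w_\la$ as a direct sum of Fock representations, and the finite-generation hypothesis then forces $V^{\mb{vac}}$ to be finite-dimensional (as any $w\in V^{\mb{vac}}$ must appear in the span of the given generators).

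First I would verify the module compatibility of $\mu$. Associativity of the action makes $\mu$ automatically $H^+_{\bs{proj}}$-linear. For $y\in H^-$, I would expand $y\cdot(xv)$ via the Heisenberg double commutation rule (a Hopf-algebraic version of $y\cdot x=\sum\lan y_{(2)},x_{(2)}\ran x_{(1)}y_{(1)}$), noting that the terms where the $H^-$-factor is pushed onto $v$ produce a scalar multiple of the counit on $v$ (using $H^-_{>0}v=0$), so the action descends to one on the Fock side. Surjectivity of $\mu$ follows from the PBW-type vector space factorisation $\fh_{\bs{proj}}\cong H^+_{\bs{proj}}\ot H^-$ implicit in the smash product construction (\ref{hd}): any element of $V$ can be written $\fh_{\bs{proj}}\cdot\{\text{generators}\}=H^+_{\bs{proj}}\cdot H^-\cdot\{\text{generators}\}\subseteq H^+_{\bs{proj}}\cdot V^{\mb{vac}}$.

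The main obstacle is proving injectivity of $\mu$. Here I would argue by induction on the maximum degree $n$ appearing in a hypothetical kernel element $\sum_i x_i\ot v_i$ with $\{v_i\}$ linearly independent in $V^{\mb{vac}}$ and $x_i\in H^+_{\bs{proj}}$. Picking a top-degree summand $x_{i_0}$ of some degree $n$, the nondegeneracy of the Hopf pairing (\ref{pa1}) restricted to $\mG_{\bs{proj}}\times K_0(\mH\mC_n\mb{-pmod})$ (which holds because $(\widetilde{\mK},\widetilde{\mG})$ is a dual Hopf pair and $\mG_{\bs{proj}}$ is precisely $\mb{Im}\,\widetilde{\chi}$ by Theorem~\ref{adj}) furnishes an element $y\in H^-_n$ with $\lan y,x_{i_0}\ran\neq0$. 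Applying $y$ to $\sum_i x_iv_i=0$, the smash product relation together with $H^-_{>0}v_i=0$ kills all lower-degree contributions and yields $\sum_i\lan y,x_i\ran v_i=0$, contradicting the linear independence of $\{v_i\}$ provided the pairing separates the $x_i$'s at degree $n$; iterating over a dual basis removes the residual ambiguity. This is precisely the step where the nondegeneracy of the Hopf pairing on $\widetilde{\mK}\times\widetilde{\mG}$ (mirrored on the Peak/$\Om$ side) is indispensable, and I expect it to be the most delicate bookkeeping in the argument.
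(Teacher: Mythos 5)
Your proposal is correct in substance but takes a noticeably more self-contained route than the paper. The paper's proof is two lines: it invokes the Stone--von Neumann theorem for Heisenberg doubles (cited as \cite[Theorem 2.11]{SY}) together with Theorem~\ref{adj} to conclude that $\fh_{\bs{proj}}\cdot v\cong H^+_{\bs{proj}}\cong\Om$ is an \emph{irreducible} $\fh_{\bs{proj}}$-module for every lowest weight vacuum vector $v$, and then invokes \cite[Lemma 9.1]{SY} (the standard ``a sum of simple submodules is semisimple'' argument) to extract the direct sum decomposition. Your plan, by contrast, proves directly that the multiplication map $\mu:H^+_{\bs{proj}}\ot V^{\mb{vac}}\rw V$ is an isomorphism of $\fh_{\bs{proj}}$-modules and only then reads off the decomposition from a basis of $V^{\mb{vac}}$. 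In effect you are reproving the Stone--von Neumann theorem in this special instance rather than citing it. The mechanics you sketch are sound: surjectivity via the PBW triangular decomposition $\fh_{\bs{proj}}\cong H^+_{\bs{proj}}\ot H^-$ of the smash product together with $H^-\cdot V^{\mb{vac}}\subseteq V^{\mb{vac}}$; injectivity via a top-degree induction, applying $y\in\mK_n$ to a hypothetical relation $\sum_i x_iv_i=0$, noting that the smash commutation rule and the vacuum condition give $y\cdot(xv)=(\sum\lan y,x_{(2)}\ran x_{(1)})v$, which vanishes for $\deg x<n$ and equals $\lan y,x\ran v$ for $\deg x=n$, after which the gradedness of the pairing and its nondegeneracy on the $\mG_{\bs{proj}}\subseteq\widetilde{\mG}$ slot in degree $n$ force the top-degree coefficients to vanish. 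Both routes are valid; the paper's is shorter by leaning on the cited general theorem, while yours makes the isomorphism $V\cong H^+_{\bs{proj}}\ot V^{\mb{vac}}$ explicit and bypasses any appeal to semisimplicity of sums of simples. Two small points of care if you flesh this out: the claim that $V^{\mb{vac}}$ is finite-dimensional is not needed for the stated conclusion (an infinite direct sum would still satisfy the lemma, though finite generation does in fact bound it), and the nondegeneracy you invoke on $\mK_n\times(\mG_{\bs{proj}})_n$ is nondegeneracy only in the $\mG_{\bs{proj}}$ slot, which is what the induction requires and follows simply from $(\mG_{\bs{proj}})_n\subseteq\widetilde{\mG}_n$ and nondegeneracy of the full pairing, not from any special property of the Cartan image.
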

\begin{proof}
By the Stone-von Neumann Theorem for Heisenberg doubles \cite[Theorem 2.11]{SY} and Theorem \ref{adj}, we have $\fh_{\bs{proj}}\cdot v\cong H^+_{\bs{proj}}\cong\Om$ as an irreducible $\fh_{\bs{proj}}$-module over $\bk$ for any lowest weight vacuum vector $v\in V$. Now the same argument in \cite[Lemma 9.1]{SY} gives the complete reducibility of $V$.
\end{proof}

For any $\be=(\be_1,\dots,\be_r),\,\ga=(\ga_1,\dots,\ga_s)$, let
$\be\cdot\ga:=(\be_1,\dots,\be_r,\ga_1,\dots,\ga_s)$,
then $\De(M_\al)=\sum_{\be\cdot\ga=\al}M_\be\ot M_\ga$.
Since $\vt$ is a Hopf algebra epimorphism, $N_\al:=\vt(M_\al)\,(\al\in\mC)$ span the Stembridge algebra $\mb{Peak}^*$ and
\[\De(N_\al)=\sum_{\be\cdot\ga=\al}N_\be\ot N_\ga.\]

Define an increasing filtration of $\fh_{\bs{proj}}$-submodules of $\mb{Peak}^*$ as follows. For $n\in\bN_0$, let
\[(\mb{Peak}^*)^{(n)}:=\sum_{\ell(\al)\leq n}\fh_{\bs{proj}}\cdot N_\al.\]
In particular, $(\mb{Peak}^*)^{(0)}:=\Om$ and by convention we also let $(\mb{Peak}^*)^{(-1)}:=0$.

\begin{prop}
The space $\mb{Peak}^*$ of peak quasisymmetric functions is free as an $\Om$-module.
\end{prop}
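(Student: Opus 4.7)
The plan is to follow the strategy of Savage et al.~\cite{SY}: exploit the $\fh_{\bs{proj}}$-action on $\mb{Peak}^*$ to show that the filtration $\{(\mb{Peak}^*)^{(n)}\}_{n\geq0}$ has successive quotients that are direct sums of lowest weight Fock representations, hence free as $\Om$-modules, and then piece these together via projectivity splittings.

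As a first step I would verify that for every $\al\vDash m$ with $\ell(\al)=n$, the image $\bar{N}_\al$ of $N_\al$ in the quotient $(\mb{Peak}^*)^{(n)}/(\mb{Peak}^*)^{(n-1)}$ is a lowest weight vacuum vector, i.e.\ is annihilated by $H^-_{>0}=\mb{Peak}_{>0}$. The categorical action (\ref{ac}) of $\widetilde{\mK}\cong\mb{Peak}$ on $\widetilde{\mG}\cong\mb{Peak}^*$ should transfer, via the Frobenius isomorphisms $\widetilde{\mb{Ch}}^*$, $\widetilde{\mb{Ch}}$ together with the dual pair structure from Section 3.3, to the standard Hopf cap-action: for $x\in\mb{Peak}$ and $f\in\mb{Peak}^*$ with $\De(f)=\sum f_{(1)}\ot f_{(2)}$ one has $x\cdot f=\sum[x,f_{(2)}]f_{(1)}$. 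Applied to $\De(N_\al)=\sum_{\be\cdot\ga=\al}N_\be\ot N_\ga$ with $x\in\mb{Peak}_{>0}$, only summands with $\ga\neq()$ contribute, and each such term is a scalar multiple of $N_\be$ with $\ell(\be)<n$, hence lies in $(\mb{Peak}^*)^{(n-1)}$.

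The second step is to observe that each graded degree of $(\mb{Peak}^*)^{(n)}/(\mb{Peak}^*)^{(n-1)}$ is finite-dimensional and is generated as an $\fh_{\bs{proj}}$-module by the finitely many vacuum vectors $\bar{N}_\al$ with $\ell(\al)=n$ in that degree. Lemma \ref{lwv} then gives a direct sum decomposition into lowest weight Fock representations, each isomorphic to $\Om$ as an $\Om$-module (where the identification uses Theorem \ref{adj} and the Stone--von Neumann theorem cited in the proof of Lemma \ref{lwv}). Summing over degrees, the full quotient is a free $\Om$-module.

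Third, I would induct on $n$: the base case $(\mb{Peak}^*)^{(0)}=\Om$ is free by construction, and at the inductive step the freeness (hence projectivity) of the quotient over $\Om$ forces the short exact sequence
\[0\rw(\mb{Peak}^*)^{(n-1)}\rw(\mb{Peak}^*)^{(n)}\rw(\mb{Peak}^*)^{(n)}/(\mb{Peak}^*)^{(n-1)}\rw0\]
of $\Om$-modules to split, so $(\mb{Peak}^*)^{(n)}$ is free. Since each $N_\al$ spans a factor inside $(\mb{Peak}^*)^{(\ell(\al))}$ and the $N_\al$'s span $\mb{Peak}^*$, we have $\mb{Peak}^*=\bigcup_{n\geq0}(\mb{Peak}^*)^{(n)}$, and this increasing union of free $\Om$-modules is free. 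The main obstacle will be the first step: one has to identify carefully the categorical action (\ref{ac}), defined via restriction and $\mb{Hom}$-spaces of supermodules, with the Hopf cap-action dual to multiplication in $\mb{Peak}$ under the pair (\ref{pp}); once this identification is in place, the vanishing modulo $(\mb{Peak}^*)^{(n-1)}$ is a purely combinatorial consequence of the coproduct formula for $N_\al$.
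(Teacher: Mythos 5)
Your proposal follows essentially the same route as the paper: filter $\mb{Peak}^*$ by $(\mb{Peak}^*)^{(n)}$, use the coproduct of $N_\al$ together with the grading of $[\cdot,\cdot]$ to show the images of $N_\al$ in the successive quotients are vacuum vectors, invoke Lemma \ref{lwv} to get that each quotient $V_n$ is a free $\Om$-module, split the resulting short exact sequences, and pass to the union. One small imprecision: ``an increasing union of free $\Om$-modules is free'' is false in general (e.g.\ $\bigcup_n 2^{-n}\bZ$ over $\bZ$); the paper makes this step correct by choosing \emph{nested} bases $\tilde L_0\subseteq\tilde L_1\subseteq\cdots$ compatible with the splittings you constructed, and you should state that explicitly rather than appeal to a nonexistent general fact.
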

\begin{proof}
For any composition $\al$ such that $\ell(\al)=n$, by the grading argument of $[\cdot,\cdot]$,
\[Q_m.N_\al=\sum_{\be\cdot\ga=\al}[Q_m,N_\ga]
N_\be\in(\mb{Peak}^*)^{(n-1)},\,m\in\bN.\]
Hence, in the quotient
$V_n:=(\mb{Peak}^*)^{(n)}/(\mb{Peak}^*)^{(n-1)}$, such $N_\al$'s are lowest weight vacuum vectors. Clearly these vectors generate $V_n$ and thus by Lemma \ref{lwv},
\[V_n=\bigoplus_{v\in L_n}\Om.v,\]
where $L_n$ is some collection of vacuum vectors in $V_n$.

Consider the short exact sequence of $\Om$-modules
\[0\rw(\mb{Peak}^*)^{(n-1)}\rw(\mb{Peak}^*)^{(n)}\rw V_n\rw0.\]
Since $V_n$ is a free $\Om$-module, the above sequence splits. Now $(\mb{Peak}^*)^{(0)}\cong\Om$, so we know that all $(\mb{Peak}^*)^{(n)}\,(n\in\bN_0)$
are free over $\Om$ by induction on $n$. It means that we can choose nested sets of vectors in $\mb{Peak}^*$
\[\tilde{L}_0\subseteq\tilde{L}_1\subseteq\tilde{L}_2\subseteq\cdots\]
such that, for any $n\in\bN_0$, $(\mb{Peak}^*)^{(n)}=\bigoplus_{\tilde{v}\in\tilde{L}_n}\Om.\tilde{v}$.
Let $\tilde{L}=\bigcup_{n\in\bN_0}\tilde{L}_n$. Then $\mb{Peak}^*=\bigoplus_{\tilde{v}\in\tilde{L}}\Om.\tilde{v}$.
\end{proof}

\centerline{\bf Acknowledgments}
We would like to thank N. Jing and A. Savage for valuable discussions during the workshop at SCUT, January 2015.


\bigskip
\bibliographystyle{amsalpha}

\end{document}